\newtheorem{theorem}{Theorem}
\newtheorem{lemma}{Lemma}
\newtheorem{proposition}{Proposition}
\newtheorem{definition}{Definition}
\newtheorem{assumption}{Assumption}
\newtheorem{remark}{Remark}
\DeclareMathOperator{\tr}{tr}  
\DeclareMathOperator{\diag}{diag}
\DeclareMathOperator{\blkdiag}{blkdiag}
\DeclareMathOperator{\Cov}{Cov}
\newcommand{\T}{^{\top}}  
\newcommand{\mycomment}[1]{{\hfill\ttfamily\footnotesize\fontdimen2\font=0.3em\textcolor{blue}{/*#1*/}}}  
\newcommand{\ie}{\textit{i.e.}}  
\def\BibTeX{{\rm B\kern-.05em{\sc i\kern-.025em b}\kern-.08em
		T\kern-.1667em\lower.7ex\hbox{E}\kern-.125emX}}
\title{
	Nonlinear Observers Design for Vision-Aided Inertial Navigation Systems
} 
\author{Miaomiao Wang,  Soulaimane Berkane,  and Abdelhamid Tayebi  
	\thanks{This work was supported by the National Sciences and Engineering Research Council of Canada (NSERC), under the grants NSERC-DG RGPIN 2020-06270 and NSERC-DG RGPIN-2020-04759.  
		This paper was presented in part at the 58th IEEE Conference on Decision and Control, Nice, France, December 2019 \cite{wang2019nonlinear}.} 
	\thanks{M. Wang is with the Department of Electrical and Computer Engineering,
		Western University, London, ON N6A 3K7, Canada (e-mail: mwang448@uwo.ca).}%
	\thanks{S. Berkane is with the Department of Department of Computer Science and Engineering,   University of Quebec in Outaouais, QC J8X 3X7, Canada (e-mail: soulaimane.berkane@uqo.ca).}%
	\thanks{A. Tayebi is with the Department of Electrical and Computer Engineering, 
		Western University, London, ON N6A 3K7, Canada, and also with the Department of Electrical Engineering, Lakehead University, Thunder Bay, ON P7B 5E1, Canada
		(e-mail:  atayebi@lakeheadu.ca).} 
}%
\begin{document} 
	
	\maketitle 
	
	\begin{abstract}
		This paper deals with the simultaneous estimation of the attitude, position and linear velocity for vision-aided inertial navigation systems. We propose a nonlinear observer on $SO(3)\times \mathbb{R}^{15}$ relying on body-frame acceleration, angular velocity and (stereo or monocular) bearing measurements of some landmarks that are constant and known in the inertial frame. Unlike the existing local Kalman-type observers, our proposed nonlinear observer guarantees almost global asymptotic stability and local exponential stability. A detailed uniform observability analysis has been conducted and sufficient conditions are derived. Moreover, a hybrid version of the proposed observer is provided to handle the intermittent nature of the measurements in practical applications. Simulation and experimental results are provided to illustrate the effectiveness of the proposed state observer.
		
	\end{abstract}
	
	\begin{IEEEkeywords}
		Nonlinear observer, Vision-aided inertial navigation system (Vision-aided INS), Bearing measurements,  
		Uniform observability.
	\end{IEEEkeywords}
	
	\section{Introduction}
	The design of reliable state observers for the simultaneous estimation of the attitude (orientation), position, and linear velocity for inertial navigation systems (INSs) is crucial in many robotics and aerospace applications. Visual sensors, which provide a rich information about the environment, are becoming ubiquitous in a wide range of applications from mobile augmented reality to autonomous vehicles' navigation. A vision system together with an inertial measurement
	unit (IMU) forms a vision-aided INS (or visual-inertial navigation system). Depending on the camera type, vision-aided INSs can be categorized as monocular vision-aided INS (single camera) or stereo vision-aided INS (two cameras in a stereo setup). Vision-aided INS is widely used in robotics, for instance, visual-inertial odometry (VIO)  and Simultaneous
	Localization and Mapping (SLAM) \cite{mourikis2007multi,li2013high,mur2015orb,qin2018vins}. Unlike the VIO  and SLAM, where the landmarks/features are unknown, the estimation problem at hand, referred to as vision-aided INS, assumes known landmarks in the inertial frame. Visual measurements of these landmarks together with IMU measurements are used to simultaneously estimate  the attitude, position and linear velocity.  The proposed estimation scheme, endowed with almost global asymptotic stability\footnote{The equilibrium point is stable and asymptotically attractive from almost all initial conditions except from a set of Lebesgue measure zero.} (AGAS) guarantees, is able to handle different types of measurements, including monocular-bearing and stereo-bearing measurements.
	
	
	\subsection{Motivation and Prior Literature}
	The position and linear velocity of a rigid body can be obtained, for instance, from a Global Positioning System (GPS), while its attitude can be estimated from a set of body-frame measurements of some known inertial vectors \cite{mahony2008nonlinear,bonnabel2009non,hua2013implementation}. 
	Typically, low-cost IMU-based estimation techniques assume that the accelerometer provides body-frame measurements of the gravity vector, which is not true in applications involving non-negligible accelerations. One solution to this problem consists in using the so-called velocity-aided attitude observers \cite{hua2010attitude,roberts2011,berkane2017attitude}, which make use of the linear velocity and IMU measurements to estimate the attitude. Instead of assuming the linear velocity available (as in the velocity-aided observers), GPS-aided navigation observers use IMU and GPS information to simultaneously estimate the attitude, position and linear velocity; see, for instance, \cite{bryne2017nonlinear} and references therein.
	These estimation schemes, however, are not suitable for implementations in GPS-denied environments (\textit{e.g.,} indoor applications). 

	One alternative approach, that is widely adopted in GPS-denied environments, is the vision-aided INS.		
	Most of the existing estimation schemes for vision-aided INSs in the literature are of Kalman-type such as the Extended Kalman Filter (EKF) and the Unscented Kalman Filter (UKF) \cite{mourikis2007multi,li2013high}. Different versions of
	these algorithms have been proposed in the literature depending on the type of vision-based measurements used. Due to the nonlinearity of the vision-aided INS kinematics, these Kalman-type filters, relying on local linearizations, do not provide strong stability guarantees. Recently, several nonlinear observers using three-dimensional (3D) landmark position measurements, have been proposed in the literature \cite{barrau2017invariant,hua2018riccati,wang2020hybrid}. In contrast with the standard EKF and its variants, an invariant EKF (IEKF), with provable local stability guarantees, has been proposed in \cite{barrau2017invariant}. A local Riccati-based nonlinear observer, inspired from  \cite{hamel2018riccati}, has been proposed in\cite{hua2018riccati}. Motivated by the work in \cite{berkane2017hybrid,wang2018hybrid}, hybrid nonlinear observers, with global exponential stability guarantees, have been proposed in \cite{wang2020hybrid}.
	
	It is clear that vision systems do not directly provide 3D landmark position measurements. In fact, they can be obtained from the images of a stereo-vision system using additional algorithms \cite{hartley2003multiple}.
	In \cite{wang2019nonlinear}, we developed nonlinear observers for attitude, position, linear velocity and gravity vector estimation, using direct stereo-bearing measurements. A local Riccati observer for attitude, position, linear velocity and accelerometer-bias estimation, with monocular-bearing measurements, has been proposed in \cite{marco2020position}. On the other hand, in practical applications, the used sensors may have different sampling rates. For instance, the sampling rates of the vision sensors are much lower than those of the IMU, which is due to the hardware of the vision sensors and the heavy image processing computations. In this situation, one should be careful as the stability results derived for continuous-time observers are not preserved when the measurements are intermittent. To address this problem, hybrid nonlinear observers have been considered in \cite{ferrante2016state,berkane2019attitude,wang2020nonlinear}. 
	
	\subsection{Contributions and Organization of the Paper}
	In the present paper, we propose an AGAS nonlinear observer for the simultaneous estimation of the attitude, position and linear velocity using body-frame accelerometer and gyro measurements as well as monocular or stereo bearing measurements. Note that, due to the motion space topology,  AGAS is the strongest result one can achieve with smooth time-invariant observers. We also provide a hybrid version of the proposed observer that takes into account the sampled and intermittent nature of the visual measurements for practical implementation purposes. Some highlights of the contributions of this paper are as follows:
	\begin{itemize}
		\item [1)] To the best of our knowledge, this work is the first to achieve AGAS results  for vision-aided INS  with  bearing measurements. Note that the Riccati observers in \cite{hamel2018riccati,marco2020position}  provide only local stability guarantees.
		The key idea that allowed to achieve this strong stability result is the introduction of some auxiliary state variables allowing to appropriately design some output-driven signals leading to linear time-varying dynamics for the translational vector state estimation errors. 
		
		\item [2)] The proposed observer uses generic vision-based measurements, including stereo-bearing and monocular-bearing measurements. This is a distinct feature from the existing nonlinear observers which are tailored to a specific type of vision-based measurements \cite{wang2019nonlinear,barrau2017invariant,hua2018riccati,marco2020position,wang2020hybrid,wang2020nonlinear}. This is achieved by introducing some auxiliary basis vectors in the estimation procedure, allowing to derive a generic (possibly time-varying) innovation term for the translational state estimation that captures the different types of vision-based measurements used in this work. 		
		
		\item[3)] A detailed uniform observably analysis has been carried out for the two types of visual landmark information (\text{i.e.,} stereo-bearing and monocular-bearing measurements). Sufficient conditions on the number and location of the landmarks as well as the motion of the vehicle, have been derived. The most challenging analysis was the one related to the monocular vision system which required a tedious proof. 
		
		\item [4)] In practice, the sampling rates of the visual measurements are much lower than those of the IMU measurements (which can be assumed continuous). In this context, we propose a hybrid version of our nonlinear observer to handle the discrete and intermittent nature of the visual measurements, which has been experimentally validated using the EuRoc dataset \cite{Burri25012016}.
		
	\end{itemize}
	
	The rest of this paper is organized as follows. After some  preliminaries  
	in Section \ref{sec:preliminary}, we formulate our estimation problem in Section \ref{sec:III}. 
	Section \ref{sec:IV} is devoted to the design of a generic nonlinear observer for vision-aided INS using different types of vision-based  measurements with stability and observability analysis. A hybrid version of the proposed observer, taking into account the discrete and intermittent nature of the visual measurements, is presented in Section \ref{sec:V}. 
	Simulation and experimental results are presented in Sections \ref{sec:simulation} and \ref{sec:experimental}, respectively.

	\section{Preliminary Material}\label{sec:preliminary}
	\subsection{Notations and Definitions}
	The sets of real, non-negative real, natural numbers and nonzero natural numbers are denoted by $\mathbb{R}$, $\mathbb{R}_{\geq 0}$, $\mathbb{N}$ and $\mathbb{N}_{>0}$, respectively. We denote by $\mathbb{R}^n$ the $n$-dimensional Euclidean space, and by $\mathbb{S}^n$ the set of unit vectors in $\mathbb{R}^{n+1}$. The Euclidean norm of a vector $x\in \mathbb{R}^n$ is denoted by $\|x\|$, and the Frobenius norm of a matrix $X\in \mathbb{R}^{n\times m}$ is denoted by $\|X\|_F = \sqrt{\tr(X\T X)}$. The $n$-by-$n$ identity and zeros matrices are denoted by $I_n$ and $0_n$, respectively. For a given matrix $A\in \mathbb{R}^{n\times n}$, we define $\lambda(A)$ as the set of all eigenvalues of $A$, and  $\mathcal{E}(A)$ as the set of all unit-eigenvectors of $A$. The minimum and maximum eigenvalues of $A$ are, respectively, denoted by $\lambda_{\min}^A$ and $\lambda_{\max}^A$. By $\blkdiag(\cdot)$, we denote the block diagonal matrix.  Let $e_i$ denote the $i$-th basis vector of $\mathbb{R}^n$ which represents the $i$-th column of the identity matrix $I_n$.  
	
	The Special Orthogonal group of order three, denoted by $SO(3)$, is defined as 
	$SO(3):=\{R\in \mathbb{R}^{3}, RR\T = R\T R=I_3, \det(R)=+1\}.$ 
	The \textit{Lie algebra} of $SO(3)$ is given by 
	$\mathfrak{so}(3):=\{\Omega\in \mathbb{R}^{3}: \Omega = -\Omega\T\}.$ 
	Let $\times$ be the vector cross-product on $\mathbb{R}^3$ and define the map $ (\cdot) ^\times: \mathbb{R}^3 \to \mathfrak{so}(3)$ such that $x\times y = x^\times y$, for any $x,y \in \mathbb{R}^3$. Let $\text{vec}: \mathfrak{so}(3) \to \mathbb{R}^3$ be the inverse isomorphism of the map $(\cdot)^\times$, such that $\text{vec}(\omega^\times) = \omega$ for all $\omega \in \mathbb{R}^3$. For a matrix $A\in \mathbb{R}^{3\times 3}$, we denote by $\mathbb{P}_a: \mathbb{R}^{3\times 3} \to \mathfrak{so}(3)$ the anti-symmetric projection of $A$  such that $\mathbb{P}_a(A) := (A-A\T)/2$. Define the composition map $\psi_a: =\text{vec} \circ \mathbb{P}_a $  such that, for a matrix $A=[a_{ij}] \in \mathbb{R}^{3\times 3}$, one has $\psi_a(A) = \frac{1}{2}[a_{32}-a_{23}, a_{13}-a_{31}, a_{21}-a_{12}]\T$.  
	For any $R\in SO(3)$, we define $|R|_I\in [0,1]$ as the normalized Euclidean distance on $SO(3)$ with respect to the identity $I_3$, which is given by $|R|_I^2 = \tr(I_3-R)/4$. 
	We introduce the following important orthogonal projection operator: $\pi: \mathbb{S}^2\to \mathbb{R}^{3\times 3}$ that will be used throughout this paper:
	\begin{align}
		\pi(x) = I_3-xx\T, \quad x\in \mathbb{S}^2  \label{eqn:pi_x}.
	\end{align}
	Note that $\pi(x)$ is an orthogonal projection matrix which geometrically projects any vector in $\mathbb{R}^3$ onto the plane orthogonal to vector $x$. Moreover, one verifies that $\pi(x)$ is bounded and positive semi-definite, $\pi(x) y = 0_{3\times 1}$ if $x,y$ are collinear, and $R\pi(x)R\T = \pi(Rx)$ for any $R\in SO(3),x\in \mathbb{S}^2$. For the sake of simplicity, the argument of the time-dependent signals is omitted unless otherwise required for the sake of clarity.
	
	Consider  $A(t)\in \mathbb{R}^{n \times n}$ and $ C(t)\in \mathbb{R}^{m \times n}$  as matrix-valued functions of time $t$, and suppose that $A(t)$ and $C(t)$ are continuous and bounded on $ \mathbb{R}_{\geq 0}$. The following definition formulates the well-known uniform observability condition in terms of the observability Gramian matrix.
	\begin{definition} 
		The pair $(A(t),C(t))$  is uniformly observable if there exist constants $\delta, \mu>0$ such that
		\begin{align}
			W_o(t,t+\delta) & :=\frac{1}{\delta} \int_{t}^{t+\delta} \Phi\T(\tau,t) C\T(\tau)  C(\tau) \Phi(\tau,t) d\tau  \nonumber \\
			&\geq \mu I_n, \quad \forall t\geq 0   \label{eqn:gramiancondition}
		\end{align}
		where $\Phi(\tau,t)$ is the transition matrix associated to $A(t)$ such that $\frac{d}{dt}\Phi(t,\tau)=A(t)\Phi(t,\tau)$ and $\Phi(t,t) = I_n$. 
	\end{definition}

	\section{Problem Formulation} \label{sec:III}
	\subsection{Kinematic Model}
	Let $\{\mathcal{I}\}$ be an inertial frame and $\{\mathcal{B}\}$ be a body-fixed frame attached to the center of mass of a rigid body. Let the rotation matrix  $R\in SO(3)$ be the attitude of the frame $\{\mathcal{B}\}$ with respect to the frame $\{\mathcal{I}\}$. Let the vectors $p\in \mathbb{R}^3$ and $v\in \mathbb{R}^3$ denote the position and linear velocity of the rigid body expressed in frame $\{\mathcal{I}\}$, respectively.
	The kinematic equations of a rigid body navigating in 3D space are given by:
	\begin{subequations}\label{eqn:INS_system}
		\begin{align}
			\dot{R} & = R \omega^\times  \label{eqn:R}\\
			\dot{p} & = v  \label{eqn:p}\\
			\dot{v} & =   g  + Ra  \label{eqn:v}
		\end{align}
	\end{subequations}
	where $g\in \mathbb{R}^3$ denotes the gravity vector in frame $\{\mathcal{I}\}$, $\omega\in \mathbb{R}^3$ denotes the angular velocity of $\{\mathcal{B}\}$ with respect to $\{\mathcal{I}\}$ expressed in frame $\{\mathcal{B}\}$, and $a\in \mathbb{R}^3$ denotes the ``apparent acceleration" capturing all non-gravitational forces applied to the rigid body expressed in  frame $\{\mathcal{B}\}$.  
	\begin{assumption}\label{assum:IMU}
		The measurements of  the angular velocity $\omega(t)$ and the acceleration $a(t)$ are continuous and bounded.
	\end{assumption}
	
	\subsection{Estimation Problem for Vision-Aided INS}\label{subsection:models}
	This work focuses on the problem of attitude, position and linear velocity estimation for INS using the body-frame acceleration and angular velocity measurements, as well as vision-based body-frame position information of a family of $N\in \mathbb{N}_{>0}$ landmarks that are constant and known in the inertial frame.  Let $p_i$ denote the (constant and known) position of the $i$-th landmark in frame $\{\mathcal{I}\}$, and $p_i^{\mathcal{B}}:= R\T(p_i-p)$ denote the position of the $i$-th landmark in frame $\{\mathcal{B}\}$. In the following, we detail the two measurement models considered in this work (see Fig. \ref{fig:visionsystems}).
	\begin{itemize}		
		\item [1)] \textit{Stereo-bearing measurements:} Let the pairs $(R_{c1}, p_{c1})$ and $(R_{c2}, p_{c2})$ denote the homogeneous transformation from the body-fixed frame $\{\mathcal{B}\}$ to the right camera frame $\{\mathcal{C}_1\}$ and left camera frame $\{\mathcal{C}_2\}$, respectively. Then, the model of the stereo-bearing vectors  of the $i$-th landmark   in frame $\{\mathcal{C}_s\}, s\in\{1,2\}$  are given as 
		\begin{align}
			y_{i}^s :=  \frac{p_i^{\mathcal{C}_s}}{\|p_i^{\mathcal{C}_s}\|} =\frac{R_{cs}\T (p_i^{\mathcal{B}}-p_{cs})}{\| p_i^{\mathcal{B}}-p_{cs}\|}, ~     i\in\{ 1,2 ,\dots,N\} \label{eqn:stereo-bearing}
		\end{align}
		where   $p_i^{\mathcal{C}_s} = R_{cs}\T (p_i^{\mathcal{B}}-p_{cs})$ denotes the   coordinates  of the  $i$-th landmark expressed in frame $\{\mathcal{C}_s\}$ for $s\in\{1,2\}$.
		\item [2)] \textit{Monocular-bearing measurements:} Let the pair $(R_{c}, p_{c})$  denote the homogeneous transformation from the body-fixed frame $\{\mathcal{B}\}$ to the  camera frame $\{\mathcal{C}\}$.  Then, the model of the monocular-bearing vector of the $i$-th landmark  expressed in frame  $\{\mathcal{C}\}$ is given as  
		\begin{align}
			y_i  :=\frac{p_i^{\mathcal{C}} }{\|p_i^{\mathcal{C}} \|} =\frac{R_{c}\T (p_i^{\mathcal{B}}-p_{c})}{\| p_i^{\mathcal{B}}-p_{c}\|}, ~   i\in\{ 1,2 ,\dots,N\} \label{eqn:monocular-bearing}
		\end{align}
		where  $p_i^{\mathcal{C}} = R_{c}\T (p_i^{\mathcal{B}}-p_{c})$ denotes the   coordinates  of the  $i$-th landmark expressed in frame $\{\mathcal{C}\}$.
	\end{itemize}  
	\begin{figure}
		\centering
		\subfloat[]{\includegraphics[width=0.44\linewidth]{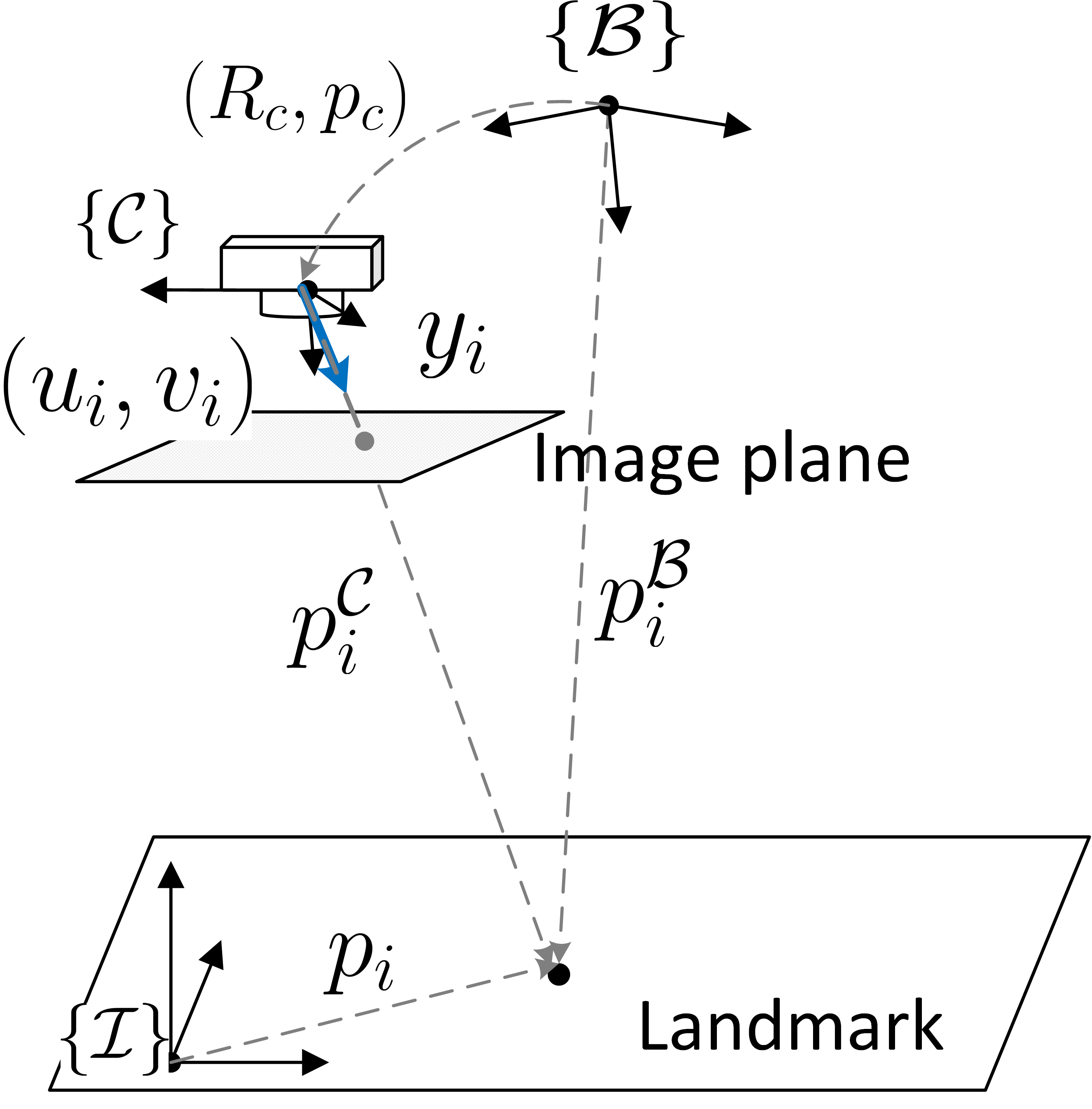}}~ 
		\subfloat[]{\includegraphics[width=0.48\linewidth]{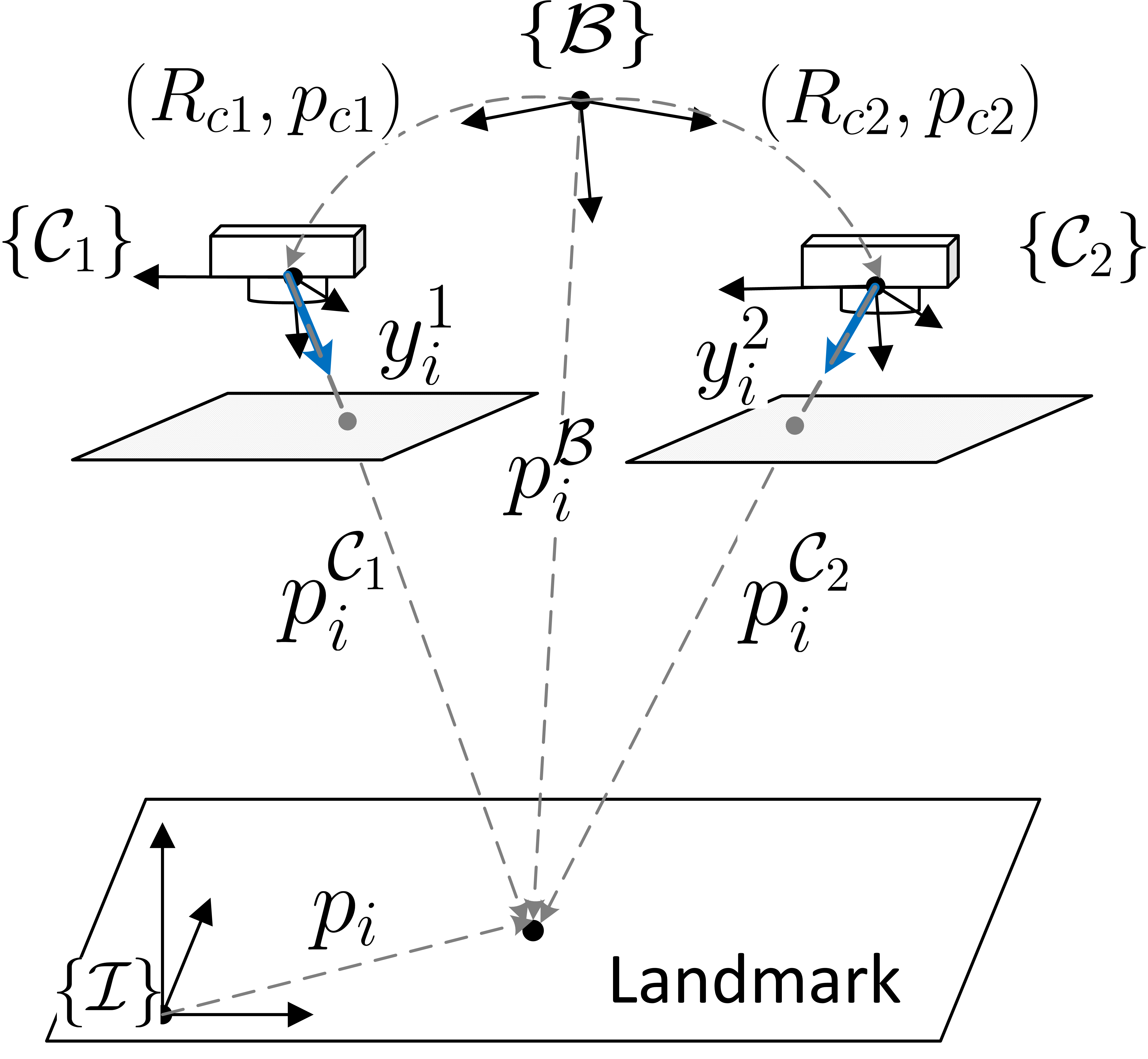}}\ 
		\caption{The geometry models of vision systems: (a) monocular vision system (b) stereo vision system.}
		\label{fig:visionsystems}
	\end{figure}
	\begin{remark}
		The bearing measurements of the $i$-th landmark can be obtained from the pixel measurements $(u_i,v_i)$  in the image plane as $y_i =  {\mathcal{K}^{-1} z_i}/{ \| \mathcal{K}^{-1} z_i \|}  \in \mathbb{S}^2$ with $z_i=[u_i,v_i,1]\T$ and $\mathcal{K}$ denoting the intrinsic matrix of the camera \cite{hartley2003multiple}. As we can see in  \eqref{eqn:stereo-bearing} and \eqref{eqn:monocular-bearing}, only partial information of the body-frame landmark positions are available in the monocular-bearing and stereo-bearing measurements. 
	\end{remark}
	
	\subsection{Objectives}
	Our main objectives in this work are as follows:
	\begin{itemize}
		\item[1)] Design an almost globally asymptotically stable nonlinear observer for the simultaneous estimation of the attitude $R(t)$, position $p(t)$ and linear velocity $v(t)$, using the IMU measurements ($\omega(t), a(t)$) and the visual measurements from either  \eqref{eqn:stereo-bearing} or \eqref{eqn:monocular-bearing}. The observer should be generic in the sense that it does not require any modification when using any of the above mentioned measurements. 
		
		\item[2)] Carry out a detailed uniform observability analysis for the above mentioned visual measurements scenarios  and provide sufficient feasibility conditions depending on the number and location of the landmarks, as well as the motion of the vehicle.  
		
		\item[3)] Provide a version of the observer with continuous IMU measurements, and sampled and intermittent visual measurements for practical implementation purposes. This is motivated by the fact that vision systems in general provide measurements at much lower sampling rates compared to the IMU sampling rates.
	\end{itemize}	
	
	\section{A Nonlinear Observer Using Continuous Vision-Based Measurements}\label{sec:IV} 
	\subsection{Nonlinear Observer Design}
	\begin{figure}[!ht]
		\centering
		\includegraphics[width=0.9\linewidth]{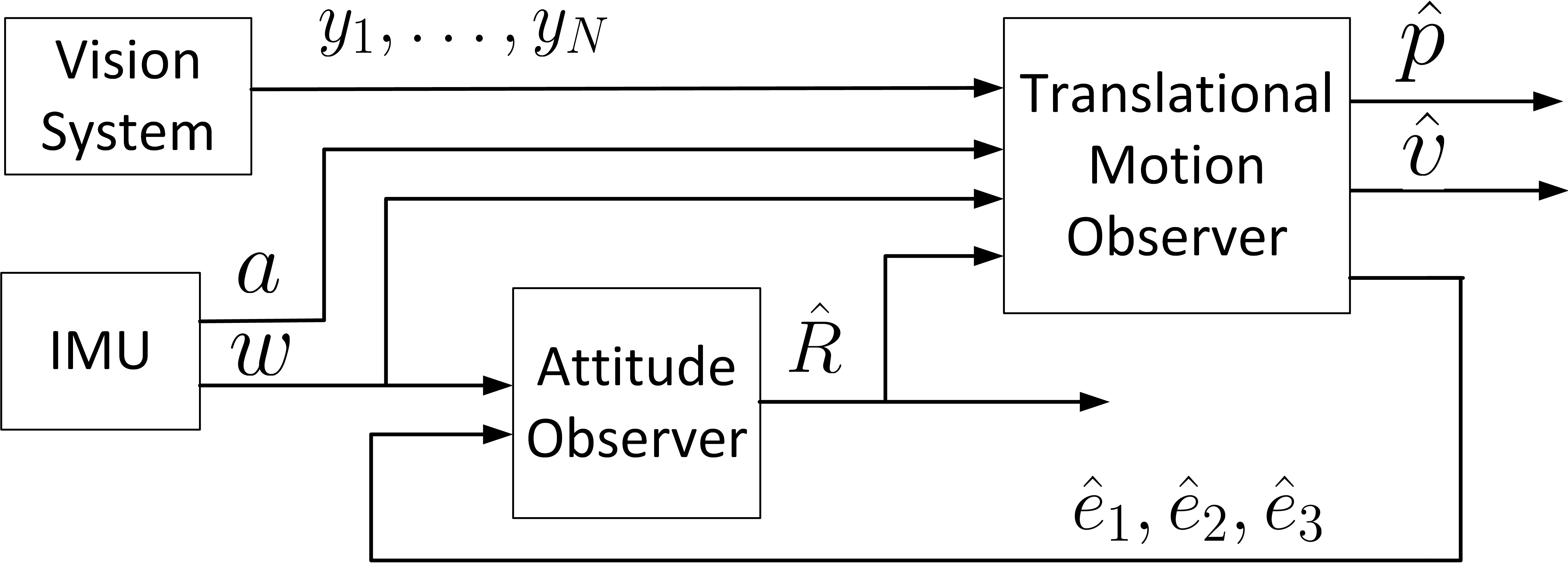}
		\caption{Structure of the proposed nonlinear observer on $SO(3) \times \mathbb{R}^{15}$ for vision-aided INSs.}
		\label{fig:diagram}
	\end{figure}
	
	To solve the vision-aided state estimation problem described in Section \ref{subsection:models}, we propose the following nonlinear  observer  on $SO(3) \times \mathbb{R}^{15}$: 
	
	\begin{subequations} \label{eqn:observer_1}	
		\begin{align} 
			\dot{\hat{R}}~  & = \hat{R}(\omega  +   \hat{R}\T \sigma_R )^\times \label{eqn:observer_1R} \\ 
			\dot{\hat{p}}~ & = \hat{v}  + \sigma_R^\times  \hat{p}   + \hat{R}   K_p \sigma_y \label{eqn:observer_1p} \\
			\dot{\hat{v}}~ & =  \hat{g} + \hat{R} a +  \sigma_R^\times \hat{v}  + \hat{R} K_v \sigma_y  \label{eqn:observer_1v}  \\ 
			\dot{\hat{e}}_i & =  \sigma_R^\times  \hat{e}_{i}  + \hat{R}K_i \sigma_y z\quad  i=1,2,3
			\label{eqn:observer_1e}  
		\end{align}
	\end{subequations}
	where $\hat{g}:=\sum\nolimits_{i=1}^{3} g_i \hat{e}_i$ with $g = [g_1,g_2,g_3]\T$. The rotation matrix $\hat{R}\in SO(3)$ denotes the estimate of the attitude $R$, and the vectors $\hat{p}\in \mathbb{R}^3$ and $\hat{v}\in \mathbb{R}^3$ denote the estimates of the position $p$ and linear velocity $v$, respectively. The structure of the proposed observer \eqref{eqn:observer_1} is shown  in Fig. \ref{fig:diagram}.  	
	The attitude innovation term $\sigma_R$ is given as follows:
	\begin{equation}      
		\sigma_R  :=   \frac{k_R}{2}  \hat{R}\sum_{i=1}^{3}   \rho_{i}    (\hat{R}\T \hat{e}_i)^\times (\hat{R}\T e_i) 
		=   \frac{k_R}{2}  \sum_{i=1}^{3}   \rho_{i}    \hat{e}_i^\times e_i 
		\label{eqn:innovation_R1} 
	\end{equation} 
	with constant scalars $k_R, \rho_i > 0, i=1,2,3$. 	
	The gain matrices $K_p,K_v,K_i\in \mathbb{R}^{3\times 3},i=1,2,3$ are designed as follows:
	\begin{equation}
		K  =  P C\T(t) Q(t) \label{eqn:Kdesign}
	\end{equation}
	with  $K:=[K_p\T, K_1\T, K_2\T,   K_3\T, K_v\T]\T $. The matrix $C(t)$ will be defined later depending on type of visual measurements used. The matrix $P$ is the solution to the following CRE:
	\begin{equation}
		\dot{P} = A(t)P + PA\T(t) - PC\T(t) Q(t) C(t)P + V(t) \label{eqn:CRE} 
	\end{equation}
	where $P(0) \in \mathbb{R}^{15\times 15}$ is a symmetric positive definite matrix, matrices $V(t)\in \mathbb{R}^{15\times 15}$ and $Q(t)\in \mathbb{R}^{3N\times 3N}$ are continuous, bounded and uniformly positive definite, and matrix $A(t)\in \mathbb{R}^{15 \times 15}$ is given as
	\begin{align}  
		A(t)  = 
		\begin{bmatrix}
			-\omega^\times  &  0_{3} & 0_{3} & 0_{3} &I_3   \\
			0_{3} & -\omega^\times  & 0_{3} & 0_{3} & 0_{3}    \\
			0_{3} & 0_{3} & -\omega^\times & 0_{3} & 0_{3}    \\
			0_{3} & 0_{3} & 0_{3} & -\omega^\times & 0_{3}  \\
			0_{3}  & g_{1} I_3 & g_2 I_3 & g_3 I_3 & -\omega^\times   
		\end{bmatrix}.  \label{eqn:closed-loop-A} 
	\end{align}
	The matrix $A(t)$ is obtained from the closed-loop translational error dynamics as it will be shown in the following subsection. 
	In the traditional Kalman filter, matrices $V(t)$ and $Q^{-1}(t)$ are associated to the covariance matrices of the additive noise on the system state and output, respectively. 
	The explicit design of $\sigma_y=[\sigma_{y 1}\T,\sigma_{y2}\T,\dots,\sigma_{yN}\T]\T\in \mathbb{R}^{3N}$ and the matrix $C(t)$ in the CRE \eqref{eqn:CRE}  for each type of visual measurement   described in Section \ref{subsection:models} are given as follows: 
	
	\begin{itemize}	
		\item [1)] \textit{Stereo-bearing measurements:}  From the stereo-bearing measurements defined in \eqref{eqn:stereo-bearing}, define the vector $\sigma_{yi}$ as 
		\begin{align} 
			\sigma_{yi} &=  \sum_{s=1}^2 \pi(R_{cs} y_{i}^s) ( \hat{R}\T  (\hat{p}_i-\hat{p} )-p_{cs}) \label{eqn:y_2}
		\end{align}
		for all $ i=1, 2, \dots, N$, with $p_i := [p_{i1}, p_{i2}, p_{i3}]\T=\sum_{j=1}^3 p_{ij} e_j$, $\hat{p}_i  = \sum_{j=1}^3 p_{ij} \hat{e}_j$ for all $i=1,2,\dots,N$ and  the projection map $\pi$ defined in \eqref{eqn:pi_x}. The matrix $C(t)$ is given by:
		\begin{align} \setlength\arraycolsep{1.5pt} 
			C(t)   =  
			\begin{bmatrix}
				\Pi_1  & -p_{11}\Pi_1 & -p_{12}\Pi_1 & -p_{13}\Pi_1 & 0_3    \\
				\Pi_2  & -p_{21}\Pi_2         & -p_{22}\Pi_2  & -p_{23}\Pi_2    & 0_3    \\
				\vdots  & \vdots& \vdots  & \vdots & \vdots   \\
				\Pi_N  & -p_{N1}\Pi_N         & -p_{N2}\Pi_N  & -p_{N3}\Pi_N & 0_3    
			\end{bmatrix} \label{eqn:C2}  
		\end{align}
		with $\Pi_i := \sum_{s=1}^2 \pi(R_{cs} y_{i}^s)\in \mathbb{R}^{3\times 3}, i\in\{1,\dots,N\}$. 
		\item [2)] \textit{Monocular-bearing measurements:} From the monocular-bearing measurements in \eqref{eqn:monocular-bearing}, vector $\sigma_{yi}$ is designed as
		\begin{equation}
			\sigma_{yi} =   \pi(R_{c} y_i) (\hat{R}\T  (\hat{ {p}}_i-\hat{p} )-p_{c})    \label{eqn:y_3}
		\end{equation}
		for all $ i=1, 2, \dots, N$, with the map $\pi$ defined in \eqref{eqn:pi_x}. The matrix $C(t)$ is given by:
		\begin{align}  \setlength\arraycolsep{1.5pt} 
			C(t)    =  
			\begin{bmatrix}
				\Pi_1  & -p_{11}\Pi_1 & -p_{12}\Pi_1 & -p_{13}\Pi_1 & 0_3    \\
				\Pi_2  & -p_{21}\Pi_2         & -p_{22}\Pi_2  & -p_{23}\Pi_2    & 0_3    \\
				\vdots  & \vdots& \vdots  & \vdots & \vdots     \\
				\Pi_N  & -p_{N1}\Pi_N         & -p_{N2}\Pi_N  & -p_{N3}\Pi_N & 0_3    
			\end{bmatrix} \label{eqn:C3}  
		\end{align}
		with $\Pi_i :=     \pi(R_c y_i)\in \mathbb{R}^{3\times 3}, i\in\{1,\dots,N\}$. 
	\end{itemize}
	
	\begin{remark}
		Note that the non-standard innovation term $\sigma_R$ in \eqref{eqn:innovation_R1}  relies on the inertial frame axes $e_i, i\in\{1,2,3\}$ and the auxiliary dynamical signals $\hat{e}_i, i\in\{1,2,3\}$. The motivation behind this construction is as follows. Typically, the attitude can be estimated using body-frame measurements of at least two non-collinear inertial frame vectors \cite{mahony2008nonlinear}. These body-frame vector measurements can be easily constructed from full landmark position measurements, see for instance \cite{wang2020nonlinear}. However, in the case of body-frame bearing measurements, the problem is quite challenging since we do not have the corresponding inertial vectors that will allow the construction of an appropriate innovation term $\sigma_R$. To overcome this challenge, we consider the inertial basis vectors $e_i$ and their corresponding body-frame vectors $R\T e_i$. Since $R\T e_i$ is unknown, we design the adaptive auxiliary vectors $\hat{e}_i$ such that $\hat{R}\T \hat{e}_i$ tends exponentially to $R\T e_i$. 
		This idea is somewhat similar to the idea of the velocity-aided attitude estimation schemes where we introduce an auxiliary variable to overcome the lack of the acceleration in the inertial frame \cite{hua2010attitude,roberts2011,berkane2017attitude}. 
		
	\end{remark}
	
	\begin{remark}
		The projection operator $\pi(R_c y_i) $ projects vectors onto the plane orthogonal to the body-frame bearing $R_c y_i$. This operator allows to eliminate, from the projected vector, the component which is collinear to $R_c y_i$. For instance, in \eqref{eqn:y_3} the projection of $\hat{R}\T  (\hat{ {p}}_i-\hat{p} )-p_{c}$ onto the plane orthogonal to the body-frame unit vector $R_cy_i$ will boil down to the projection of $\hat{R}\T  (\hat{ {p}}_i-\hat{p} )-p_i^{\mathcal{B}}$, since the component  $p_i^{\mathcal{B}}-p_c$, which is parallel to $R_cy_i$, will be eliminated. This mechanism, will allow us to generate the needed terms to put the error injection vector $\sigma_y$ in the sought-after form $\sigma_y=C(t)\tilde{x}$ as it will be shown later. The projection idea, however, is not new; it has been used in different ways in many references, for instance,  \cite{baldwin2009nonlinear,batista2015navigation,hamel2017position,berkane2020nonlinear}. 
	\end{remark}

	\begin{remark}
		In fact, landmark positions can be algebraically reconstructed from stereo-bearing measurements. The main motivation for the direct use of the stereo-bearing measurements in our observer is related to the robustness of the resulting estimation algorithm. As shown in the experimental results (Section \ref{sec:experimental}), the observers relying on landmark position measurements may fail in situations where one of the cameras of the stereo-vision system loses sight of the landmarks for some period of time. However, our observer using direct stereo-bearing measurements handles this situation very well by switching to a monocular bearing configuration.   
	\end{remark}

	\subsection{Error Dynamics and Stability Analysis} \label{sec:stability}
	Define the geometric attitude estimation error $\tilde{R} := R\hat{R}\T \in SO(3)$, and the translational vector state estimation error $ \tilde{x} := [
	{\tilde{p}}\T,  
	{\tilde{e}}_{1}\T,
	{\tilde{e}}_{2}\T, 
	{\tilde{e}}_3\T,
	{\tilde{v}}\T ]\T \in \mathbb{R}^{15}
	$ with $\tilde{p}  =   R\T p - \hat{R}\T \hat{p}$, $\tilde{v}= R\T v - \hat{R}\T\hat{v}$  and $\tilde{e}_i =  R\T e_i - \hat{R}\T \hat{e}_i, \forall i\in\{1,2,3\}$. These geometric estimation errors are motivated from \cite{wang2019nonlinear,wang2020hybrid,wang2020nonlinear}, and are different from the standard linear errors used in classical EKF-based filters \cite{mourikis2007multi,li2013high}.  
	Thus, the innovation term $ \sigma_R $ in \eqref{eqn:innovation_R1} can be rewritten as
	\begin{align}
		\sigma_R &= k_R\psi_a(M\tilde{R}) +   \Gamma(t) \tilde{x} \label{eqn:LTV_sigma_R} 
	\end{align}
	with $\Gamma(t)  :=   \frac{k_R}{2} [
	0_{3}, \rho_1 e_1^\times \hat{R}(t),   \rho_2 e_2^\times \hat{R}(t)
	,  \rho_3   e_3^\times \hat{R}(t), 0_{3}] \in \mathbb{R}^{3\times 15}$, 
	$M := \sum_{i=1}^3 \rho_i e_i e_i\T=\mathrm{diag}(\rho_1,\rho_2,\rho_3)$ and $
	\psi_a(M\tilde{R}) =-\frac{1}{2} \sum_{i=1}^3 \rho_i   e_i ^\times \tilde{R}\T e_i 
	$. It is not difficult to show that $\Gamma(t)$ is continuous and bounded since $\|\Gamma(t)\|_F  \leq   \frac{\sqrt{2}}{2} k_R\sum_{i=1}^{3}   \rho_{i}   : = c_\Gamma$.   Moreover, for any distinct non-negative scalars $\rho_i,i=1,2,3$, the matrix $M$ is positive semi-definite with three distinct eigenvalues.  From \eqref{eqn:LTV_sigma_R}, one can notice that $\sigma_R$ has two terms: the first term $k_R\psi_a(M\tilde{R})$ is commonly used for the establishment of the stability proofs of the attitude estimation subsystem; see for instance \cite{mahony2008nonlinear,berkane2017hybrid}. The second term depending on the estimation error $\tilde{x}$ is an asymptotically vanishing term as it will be shown later.
	In view of \eqref{eqn:INS_system}, \eqref{eqn:observer_1} and \eqref{eqn:LTV_sigma_R}, one obtains the following closed-loop system:
	\begin{subequations}\label{eqn:closed-loop}
		\begin{align} 
			\dot{\tilde{R}} &= \tilde{R}( -k_R\psi_a(M\tilde{R}) -   \Gamma(t) \tilde{x})^\times \label{eqn:dynamic_R} \\
			\dot{\tilde{x}} &= A(t) \tilde{x} - K \sigma_y \label{eqn:dynamic_x} 
		\end{align}
	\end{subequations}
	where  $A(t)$ is defined in \eqref{eqn:closed-loop-A}, and $K$ is designed in \eqref{eqn:Kdesign} relying on the solution $P(t)$ to the CRE \eqref{eqn:CRE}.  Note that the overall closed-loop system \eqref{eqn:closed-loop} is nonlinear and it can be seen as a cascade interconnection of a linear time-varying  (LTV) system on $\mathbb{R}^{15}$  and a nonlinear system evolving on $SO(3)$.  Given continuous and bounded matrices $A(t),C(t),Q(t),V(t)$, with $Q(t),V(t)$ being uniformly positive definite and the pair $(A(t),C(t))$ being uniformly observable, it follows that the solution $P(t)$ to the CRE \eqref{eqn:CRE} is well defined on $\mathbb{R}_{\geq 0}$ and there exist positive constants $0< p_m\leq p_M < \infty$ such that $p_m I_{15} \leq P(t) \leq p_M I_{15}$ for all $ t\geq 0$ \cite{bucy1967global,bucy1972riccati}.
	
	\begin{theorem} \label{theo:theo1}
		Consider the nonlinear system \eqref{eqn:closed-loop} with $\sigma_y=C(t)\tilde{x}$ and $C(t)$ being continuous and bounded. Let Assumption \ref{assum:IMU} hold, and suppose that the pair $(A(t),C(t))$ is uniformly observable. Pick $k_R>0$ and three distinct scalars $\rho_i > 0, i=1,2,3$. Let $K$ be given in \eqref{eqn:Kdesign} with matrices $Q(t)$ and $V(t)$ in \eqref{eqn:CRE} being continuous, bounded and uniformly positive definite. Then, the following statements hold:
		\begin{itemize}
			\item [i)] All solutions of the closed-loop system \eqref{eqn:closed-loop} converge to the set of equilibria given by $(I_3,0_{15\times 1})\cup \Psi_M$ where 
			\begin{align}
				\Psi_M:= \{(\tilde{R}, \tilde{x})\in SO(3)\times \mathbb{R}^{15}|  \tilde{R}=\mathcal{R}_\alpha(\pi,v),   \nonumber \\
				v\in \mathcal{E}(M),  \tilde{x}=0_{15\times 1} \}.
			\end{align} 
			\item [ii)] The desired equilibrium $(I_3,0_{15\times 1})$ is locally exponentially stable.
			
			\item [iii)]  All the undesired equilibria in $\Psi_M$ are unstable, and the desired equilibrium   $(I_3,0_{15\times 1})$ is almost globally asymptotically stable.  
		\end{itemize}
	\end{theorem}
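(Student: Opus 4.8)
The plan is to exploit the cascade structure of the closed-loop system \eqref{eqn:closed-loop}: with $\sigma_y = C(t)\tilde x$, the translational error dynamics \eqref{eqn:dynamic_x} becomes the linear time-varying system $\dot{\tilde x} = \bigl(A(t) - P(t)C\T(t)Q(t)C(t)\bigr)\tilde x$, which is autonomous with respect to $\tilde R$ and feeds the attitude error \eqref{eqn:dynamic_R} only through the bounded signal $\Gamma(t)\tilde x$. First I would establish that $\tilde x\to 0$ uniformly exponentially. Since $p_m I_{15}\le P(t)\le p_M I_{15}$, I would use $\mathcal V(t,\tilde x)=\tilde x\T P^{-1}(t)\tilde x$; substituting the CRE \eqref{eqn:CRE} into $\frac{d}{dt}P^{-1}=-P^{-1}\dot P P^{-1}$ and cancelling the $A$-terms yields $\dot{\mathcal V}=-\tilde x\T\bigl(C\T Q C + P^{-1}V P^{-1}\bigr)\tilde x$, and the uniform positivity of $V(t)$ together with the bounds on $P(t)$ gives $\dot{\mathcal V}\le -c\,\mathcal V$ for some $c>0$, hence $\|\tilde x(t)\|\le\kappa e^{-\lambda t}\|\tilde x(0)\|$ for some $\kappa,\lambda>0$.

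For item~i), I would turn to the attitude subsystem with the Lyapunov function $V_R(\tilde R)=\tr\bigl(M(I_3-\tilde R)\bigr)$. Using the identity $\tr(B\,u^\times)=-2\,\psi_a(B)\T u$ along \eqref{eqn:dynamic_R} one gets $\dot V_R=-2k_R\|\psi_a(M\tilde R)\|^2-2\,\psi_a(M\tilde R)\T\Gamma(t)\tilde x$, and Young's inequality combined with Step~1 gives $\dot V_R\le -k_R\|\psi_a(M\tilde R)\|^2+(c_\Gamma^2/k_R)\|\tilde x\|^2$ with $\|\tilde x(t)\|^2$ integrable. Hence $V_R$ is bounded and $\int_0^\infty\|\psi_a(M\tilde R(\tau))\|^2\,d\tau<\infty$; since the right-hand side of \eqref{eqn:closed-loop} is bounded, Barbalat's lemma gives $\psi_a(M\tilde R(t))\to 0$. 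As $M=\diag(\rho_1,\rho_2,\rho_3)$ has three distinct eigenvalues, the zero set of $\tilde R\mapsto\psi_a(M\tilde R)$ consists of the isolated points $I_3$ and $\mathcal R_\alpha(\pi,v)$, $v\in\mathcal E(M)$, so $\tilde R(t)$ converges to one of them; together with $\tilde x\to 0$ this proves item~i).

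For item~ii), I would linearize \eqref{eqn:closed-loop} at $(I_3,0_{15\times1})$. Writing $\tilde R\approx I_3+\theta^\times$ and using $\psi_a(M\tilde R)\approx\tfrac12\bigl((\tr M)I_3-M\bigr)\theta$, the linearized dynamics is the block-upper-triangular cascade $\dot\theta=-\tfrac{k_R}{2}\bigl((\tr M)I_3-M\bigr)\theta-\Gamma(t)\tilde x$, $\dot{\tilde x}=(A-PC\T Q C)\tilde x$. The matrix $(\tr M)I_3-M=\diag(\rho_2+\rho_3,\rho_1+\rho_3,\rho_1+\rho_2)$ is constant and positive definite, and the $\tilde x$-block is uniformly exponentially stable by Step~1; a cascade of uniformly exponentially stable subsystems with a bounded interconnection is uniformly exponentially stable, so by Lyapunov's indirect method $(I_3,0_{15\times1})$ is locally exponentially stable.

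For item~iii), I would first note that $\{\tilde x=0_{15\times1}\}$ is forward invariant and that the attitude dynamics restricted to it is the gradient flow $\dot{\tilde R}=-k_R\tilde R(\psi_a(M\tilde R))^\times$; for $M$ with distinct eigenvalues each $\mathcal R_\alpha(\pi,v)$, $v\in\mathcal E(M)$, is a hyperbolic critical point of $V_R$ (a saddle or the global maximum), hence an unstable equilibrium of that flow, and instability on an invariant submanifold implies instability in $SO(3)\times\mathbb R^{15}$, so every point of $\Psi_M$ is unstable. To upgrade item~i) to almost global asymptotic stability, I would argue that the complement of the basin of attraction of $(I_3,0_{15\times1})$ equals the union of the stable manifolds of the finitely many points of $\Psi_M$: each such equilibrium is hyperbolic, its linearization being block-triangular with the uniformly exponentially stable $\tilde x$-block and a constant attitude block having at least one eigenvalue with positive real part and none on the imaginary axis, so by the nonautonomous stable-manifold theorem each stable manifold is an immersed submanifold of codimension at least one and hence of Lebesgue measure zero; combined with items~i) and~ii) this yields almost global asymptotic stability. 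I expect the main obstacle to be precisely this last measure-zero step: rigorously characterizing the set attracted to $\Psi_M$ in the time-varying setting requires the hyperbolicity of the undesired equilibria — which is exactly where distinctness of the $\rho_i$ is used — and a nonautonomous stable-manifold / exponential-dichotomy argument, whereas the remaining pieces are the standard gluing of a Riccati-observer Lyapunov estimate with a gradient-flow analysis on $SO(3)$ via the cascade.
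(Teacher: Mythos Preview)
Your proposal is correct and follows the same overall architecture as the paper: exponential decay of $\tilde x$ via the Riccati Lyapunov function $\tilde x\T P^{-1}\tilde x$, convergence of the attitude via the modified trace function $\tr((I_3-\tilde R)M)$ and Barbalat, and a linearization/stable-manifold argument for the undesired equilibria. Three technical choices differ. For item~i) you absorb the cross-term $2\psi_a(M\tilde R)\T\Gamma\tilde x$ by Young's inequality and integrability of $\|\tilde x\|^2$, whereas the paper forms the single Lyapunov function $\mathcal L=\tr((I_3-\tilde R)M)+\kappa\,\tilde x\T P^{-1}\tilde x$ and chooses $\kappa$ large to make the resulting $2\times 2$ quadratic form in $(\|\psi_a(M\tilde R)\|,\|\tilde x\|)$ positive definite; both are standard and equivalent. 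For item~ii) you invoke the time-varying indirect method on the linearized cascade, while the paper avoids linearization altogether: it stays with the same $\mathcal L$, restricts to a sublevel set where $|\tilde R|_I<1$, uses the two-sided bound $\varrho|\tilde R|_I^2\le\|\psi_a(M\tilde R)\|^2\le 4\lambda_{\max}^{W}|\tilde R|_I^2$, and obtains $\dot{\mathcal L}\le-\lambda_{\min}^{\bar H}\|(|\tilde R|_I,\|\tilde x\|)\|^2$ directly, which in addition yields an explicit (in fact arbitrarily large) basin. For item~iii) the paper does \emph{not} appeal to a nonautonomous stable-manifold theorem: it exploits that $\{\tilde x=0\}$ is invariant, that the attitude dynamics there is the autonomous flow $\dot{\tilde R}=\tilde R(-k_R\psi_a(M\tilde R))^\times$, applies the classical stable-manifold theorem to this flow to conclude the union of stable manifolds of $\mathcal R_\alpha(\pi,v)$ has measure zero in $SO(3)$, and then passes to the full system. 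Your expectation that the nonautonomous case is the delicate step is well founded; the paper's shortcut via the invariant submanifold is exactly how it circumvents that machinery.
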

	\begin{proof}
		See  Appendix \ref{sec:proof_theo1}.
	\end{proof}
	Theorem \ref{theo:theo1} provides AGAS and local exponential stability results for the proposed nonlinear observer. Among the interesting features of our observer is the fact that $\tilde{x}$ is guaranteed to converge globally exponentially to zero independently from the dynamics of $\tilde{R}$, as long as $\sigma_y$ can be written as $\sigma_y=C(t)\tilde{x}$ and the pair $(A(t),C(t))$ is uniformly observable. Note that $A(t)$ in \eqref{eqn:closed-loop-A}   is continuous and bounded since $\omega(t)$ is   continuous and bounded. It is worth pointing out that AGAS for \eqref{eqn:closed-loop} is the strongest result one can aim at with a smooth vector field on $SO(3)\times \mathbb{R}^{15}$. This is due to the topological obstruction on Lie group $SO(3)$, which consists in the fact that no continuous time-invariant vector field on $SO(3)$ leads to a globally asymptotically stable equilibrium \cite{koditschek1988application}.
	
	\subsection{Observability Analysis}\label{sec:obsv}    	
	In this subsection, we derive sufficient conditions for the uniform observability of the pair $(A(t),C(t))$ for the previously mentioned two types of vision-based measurements. An important technical result that will be used to carry out our uniform observability proofs is given in the following lemma:
	\begin{lemma}\label{lemma:lemmaUOC}
		Consider a constant matrix $A\in \mathbb{R}^{n\times n}$ and a (possibly) time-varying matrix  $C(t)\in  \mathbb{R}^{m\times n}$ such that: 
		\begin{itemize}
			\item[1)] All eigenvalues of $A$ are real.
			\item[2)] $C(t)$ is continuous and bounded. 
		\end{itemize}
		Let $N = A-S$ be a nilpotent matrix with index $s\leq n$, where $S$ is a diagonalizable matrix. Let $\mathcal{O}(t)\in \mathbb{R}^{r\times n}$ be a matrix composed of ($r>0$) row vectors of $C(t)$, $C(t)N, \dots, C(t)N^{s-1}$. Suppose that there exist  constant scalars $\delta,\mu>0$ such that 
		\begin{equation}
			\int_{t}^{t+\delta} \mathcal{O}\T(\tau) \mathcal{O}(\tau) d\tau > \mu I_n, \quad \forall t\geq 0.  \label{eqn:gramiancondition1}
		\end{equation}  
		Then, the pair $(A,C(t))$ is uniformly observable.	 
	\end{lemma}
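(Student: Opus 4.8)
The plan is to argue by contradiction, reducing uniform observability of $(A,C(t))$ to the large‑$\theta$ behaviour of the flow $\theta\mapsto e^{A\theta}x$ (recall that for constant $A$ the transition matrix is $\Phi(\tau,t)=e^{A(\tau-t)}$, and $x\T W_o(t,t+T)x=\frac1T\int_0^{T}\|C(t+\theta)e^{A\theta}x\|^2\,d\theta$). If $(A,C(t))$ were \emph{not} uniformly observable, then for every $T>0$ and every $\nu>0$ there would be $t\ge0$ and a unit vector $x$ with $\int_0^{T}\|C(t+\theta)e^{A\theta}x\|^2\,d\theta<T\nu$. I would invoke this with $T=T_m\to\infty$ and $\nu=\nu_m$ chosen so that $T_m\nu_m\to0$, getting $t_m\ge0$ and unit vectors $x_m$ with $\int_0^{T_m}\|C(t_m+\theta)e^{A\theta}x_m\|^2\,d\theta\to0$; after passing to a subsequence, $x_m\to x$ with $\|x\|=1$.

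Next I would isolate the dominant direction of $e^{A\theta}x$. Write $A=S+N$ with $S$ the semisimple part (so $SN=NS$; in the intended application $S=0$), hence $e^{A\theta}=e^{S\theta}\sum_{j=0}^{s-1}\frac{\theta^j}{j!}N^{j}$. Since all eigenvalues of $A$ are real, $e^{A\theta}x$ is a finite sum of terms of the form $e^{\lambda\theta}\cdot(\text{vector polynomial in }\theta)$; letting $\lambda^\star$ be the largest eigenvalue carrying a nonzero contribution and $k^\star$ the degree of the corresponding polynomial, there is a fixed nonzero vector $w$ — a nonzero multiple of $N^{k^\star}$ applied to the dominant spectral component of $x$ (so $Nw=0$) — with $e^{A\theta}x=\frac{\theta^{k^\star}}{k^\star!}e^{\lambda^\star\theta}w+\rho(\theta)$ and $\|\rho(\theta)\|\le\varepsilon\,\frac{\theta^{k^\star}}{k^\star!}e^{\lambda^\star\theta}\|w\|$ for all $\theta\ge\theta_\star=\theta_\star(x,\varepsilon)$. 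The key consequence of $w\in\ker N$ is that $\mathcal{O}(\tau)w$ has all its entries coming from the rows of $C(\tau)$ itself, so $\|\mathcal{O}(\tau)w\|\le\|C(\tau)w\|$ for every $\tau$.

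The contradiction then comes from playing this against the hypothesis on a single window. Fix $\varepsilon>0$ small enough that $2\varepsilon^2\bar c^2\delta<\mu$, with $\bar c:=\sup_\tau\|C(\tau)\|$, and take the corresponding $\theta_\star$. For all large $m$ the window $[\theta_\star,\theta_\star+\delta]$ lies in $[0,T_m]$, so $\int_{\theta_\star}^{\theta_\star+\delta}\|C(t_m+\theta)e^{A\theta}x\|^2\,d\theta\to0$ (using $x_m\to x$ and boundedness of $C$ and of $e^{A\theta}$ on the fixed interval). Dividing the decomposition of $e^{A\theta}x$ by the scalar $\frac{\theta^{k^\star}}{k^\star!}e^{\lambda^\star\theta}$, which is bounded below by some $c_\star>0$ on $[\theta_\star,\theta_\star+\delta]$, and using $\|C(t_m+\theta)\rho(\theta)\|\le\bar c\|\rho(\theta)\|$, I get $\|C(t_m+\theta)w\|\le c_\star^{-1}\|C(t_m+\theta)e^{A\theta}x\|+\varepsilon\bar c\|w\|$ on that window, hence $\limsup_m\int_{\theta_\star}^{\theta_\star+\delta}\|C(t_m+\theta)w\|^2\,d\theta\le 2\varepsilon^2\bar c^2\delta\|w\|^2$. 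On the other hand, the standing hypothesis on $[t_m+\theta_\star,\,t_m+\theta_\star+\delta]$ evaluated at $w\in\ker N$ gives $\int_{\theta_\star}^{\theta_\star+\delta}\|C(t_m+\theta)w\|^2\,d\theta\ge\int_{\theta_\star}^{\theta_\star+\delta}\|\mathcal{O}(t_m+\theta)w\|^2\,d\theta\ge\mu\|w\|^2$. Together these give $\mu\|w\|^2\le 2\varepsilon^2\bar c^2\delta\|w\|^2<\mu\|w\|^2$, a contradiction; so $(A,C(t))$ is uniformly observable, and tracking the constants yields an explicit window length and rate.

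The part I expect to be most delicate is that $C$ is merely continuous and bounded, so one can neither differentiate $C(t+\theta)e^{A\theta}x$ along the flow nor pass to a strong/uniform limit of $C(t_m+\cdot)$ — the argument must never probe $C$ against a direction it has not already "seen". The leading‑term reduction is designed precisely to avoid this: it pins down the single fixed direction $w\in\ker N$ toward which the flow eventually points, so only $\|C(\cdot)w\|$ ever enters, and it is exactly the boundedness of $C$ that turns the asymptotic near‑collinearity into the quantitative estimate above. The remaining points are routine: the scaling $T_m\nu_m\to0$ so the integral genuinely vanishes, and — for eigenvalues other than those appearing in the application — reducing the general‑spectrum case to the dominant‑block computation via the real generalized‑eigenspace decomposition and the commuting Jordan–Chevalley factors $e^{S\theta}$, $e^{N\theta}$ (with $e^{S\theta}$ acting as a positive scalar $e^{\lambda\theta}$ on each block, hence harmless on finite windows).
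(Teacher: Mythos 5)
Your proposal is correct, and at the top level it uses the same machinery as the paper's proof of Lemma~\ref{lemma:lemmaUOC}: a contradiction sequence $(t_m,x_m)$ with $x_m\to x$ on the unit sphere, the commuting Jordan--Chevalley factorization $e^{A\theta}=e^{S\theta}e^{N\theta}$, and division by the leading scalar $\frac{\theta^{k}}{k!}e^{\lambda\theta}$ on a window placed far enough out that the remainder is relatively small, with boundedness of $C$ converting that smallness into an integral estimate. Where you genuinely diverge is in the endgame. The paper peels the expansion in two separate stages (its auxiliary Lemmas~\ref{lemma:CexpN} and~\ref{lemma:CN}): it first strips $e^{S\tau}$ by a top-down argument over the distinct eigenvalues, then strips the nilpotent polynomial by a top-down argument over the powers of $N$, concluding that $\int\|C(t_q+\tau)N^k\bar z\|^2d\tau\to 0$ for \emph{every} $k=0,\dots,s-1$, and finally contradicts \eqref{eqn:gramiancondition1} tested at the original limit vector $\bar z$ via $\|\mathcal{O}\bar z\|^2\le\sum_k\|CN^k\bar z\|^2$. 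You instead extract in a single step the globally dominant term of $e^{A\theta}x$, whose direction $w=N^{k^\star}\Pi_{\lambda^\star}x$ automatically satisfies $Nw=0$, and test \eqref{eqn:gramiancondition1} at $w$ rather than at $x$: since only the $C$-rows of $\mathcal{O}$ survive on $\ker N$, you get $\mu\|w\|^2\le\int_{\theta_\star}^{\theta_\star+\delta}\|C(t_m+\theta)w\|^2d\theta$ from the hypothesis, against the upper bound $2\varepsilon^2\bar c^2\delta\|w\|^2$ from the vanishing integral. This buys a shorter, fully quantitative argument with no nested contradictions and no need to establish the vanishing of all the $CN^k\bar z$; what it gives up is only that stronger intermediate conclusion, which the lemma does not require. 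Two points worth making explicit when writing it up: the order of quantifiers is sound because $\varepsilon$ is fixed from $(\mu,\delta,\bar c)$ alone before $\theta_\star$ and $c_\star$ are chosen and before $m\to\infty$; and if $\mathcal{O}$ is allowed to repeat rows of $C$, the inequality $\|\mathcal{O}(\tau)w\|\le\|C(\tau)w\|$ should be replaced by $\|\mathcal{O}(\tau)w\|^2\le s\,\|C(\tau)w\|^2$, which only rescales $\mu$.
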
 
	\begin{proof}
		See  Appendix \ref{sec:proof_lemmaUOC}.
	\end{proof}		
	Note that the decomposition $A = S +N$ with  a nilpotent matrix $N$ and a diagonalizable matrix $S$ is known as the Jordan-Chevalley decomposition. It is important to mention that matrices $N$ and $S$  are uniquely determined and commute (\ie, $SN=NS$), see \cite[Theorem 1]{perko2013differential}. The main advantage of Lemma \ref{lemma:lemmaUOC} is to provide a relaxed condition for the uniform observability of the pair $(A,C(t))$ when the nilpotent part of $A$ is non-zero (\ie, $N\neq 0_n$). Condition   \eqref{eqn:gramiancondition1} is equivalent to the Kalman observability if $A$ is a nilpotent matrix and $C$ is constant. Moreover, if $A$ is a diagonalizable matrix (\ie, $N= 0_n$), condition \eqref{eqn:gramiancondition1} reduces to the persistency of excitation (PE) requirement on $C(t)$. Note that other uniform observability conditions were proposed in the literature such as \cite[Lemma 3.1]{scandaroli2013visuo}, which involves high-order derivatives of $C(t)$, and \cite[Lemma 2.7]{hamel2017position}, which is more suitable when $C(t)$ is the product of a PE matrix and a constant matrix guaranteeing Kalman observability.

	\subsubsection{Stereo-bearing measurements} \label{sec:Ob_stereo}  
	From \eqref{eqn:stereo-bearing},  one can rewrite $\sigma_{yi}$ in \eqref{eqn:y_2} in terms of the estimation errors as
	\begin{align}    
		\sigma_{yi} & =   \sum_{s=1}^2 R_{cs}\pi(y_{i}^s)R_{cs}\T     ( \hat{R}\T  (\hat{p}_i-\hat{p} )- R\T(p_i-p)    ) \nonumber \\
		&      =     \Pi_i \tilde{p}- p_{i 1}  \Pi_i \tilde{e}_1 -p_{i 2}  \Pi_i \tilde{e}_2 -p_{i 3}  \Pi_i \tilde{e}_3  \label{eqn:def_y_i_1-2}
	\end{align}
	for all $i=1,\dots,N$, where we made use of  the facts $\Pi_i  = \sum_{s=1}^2 \pi(R_{cs} y_{i}^s)  $,   $p_i = \sum_{j=1}^3 p_{i j} e_j$ and  $ \pi(R_{cs} y_{i}^s)  (R\T(p_i-p) - p_{cs})=0_{3\times 1}$. From the definition of $\tilde{x}$, one obtains  $\sigma_y=C(t)\tilde{x}$  with $C(t)$ defined in \eqref{eqn:C2}.	 For each $i\in\{1,\dots,N\}$, the matrix $\Pi_i$ is positive definite if the vectors $R_{c1} y_{i}^1$ and $R_{c2} y_{i}^2$ are non-collinear. Note that for stereo vision systems, $R_{c1} y_{i}^1$ and $R_{c2} y_{i}^2$ are naturally non-collinear since all the visible landmarks are within the limited sensing distance in practice. Hence, one obtains that the  matrices $\Pi_i, i=1,\dots,N$ are uniformly positive definite.  
	Moreover, the matrix $C(t)$ is continuous since $\Pi_i(t), \forall i=\{1,\dots,N\}$ is continuous. Defining the  block diagonal matrix 
	$\Theta(t):=\blkdiag(\Pi_1(t),\dots,\Pi_N(t))$, one has $C(t) = \Theta(t) \bar{C} $ with  
	\begin{align} 
		\bar{C} &  =  
		\begin{bmatrix}
			I_3  & -p_{11}I_3  & -p_{12}I_3 & -p_{13}I_3  & 0_{3}   \\
			I_3  & -p_{21}I_3         & -p_{22}I_3  & -p_{23}I_3    & 0_{3}     \\
			\vdots  & \vdots& \vdots  & \vdots    \\
			I_3  & -p_{N1}I_3         & -p_{N2}I_3  & -p_{N3}I_3 & 0_{3}  
		\end{bmatrix} \label{eqn:Cbar} .
	\end{align}
	From {\eqref{eqn:Cbar}}, it is easy to show that the matrix $C(t)$ defined in  {\eqref{eqn:C2}} is bounded since $\|{\Theta}(t)\|_F \leq   \sum_{i=1}^N \|\Pi_i(t)\|_F$ and $\|\Pi_i(t)\|_F,i=1,\dots,N$ are bounded for all $t\geq 0$.
	
	\begin{lemma} \label{lemma:AC2}
		Consider the matrices $A(t)$ defined in \eqref{eqn:closed-loop-A} and $C(t)$ defined in \eqref{eqn:C2}. Suppose that  there exist three non-aligned landmarks among the $N\geq 3$ measurable landmarks, whose plane is not parallel to the gravity vector.
		Then, the pair $(A(t),C(t))$ is uniformly observable.  
	\end{lemma}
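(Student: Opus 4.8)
The plan is to apply Lemma \ref{lemma:lemmaUOC} with the constant-matrix surrogate obtained by freezing $\omega$, after first handling the time-varying rotational block $-\omega^\times$. Note that $A(t)$ in \eqref{eqn:closed-loop-A} has a very special structure: it is block-Toeplitz with diagonal blocks all equal to $-\omega^\times$, and the only off-diagonal coupling is the $I_3$ in the $(1,5)$ block and the $g_j I_3$ in the $(5,j)$ blocks for $j=2,3,4$. A standard device in Riccati-observer observability analysis (see e.g.\ \cite{hamel2017position,barrau2017invariant}) is to conjugate by the block-diagonal rotation $\mathcal{T}(t):=I_5\otimes R\T(t)$ (or $\hat R(t)$), which removes the $-\omega^\times$ terms and turns $A(t)$ into a \emph{constant} matrix $\bar A$ whose only nonzero entries are $I_3$ in block $(1,5)$ and $g_j I_3$ in blocks $(5,2),(5,3),(5,4)$, while transforming $C(t)$ into $\tilde C(t):=\Theta(t)\bar C\,\mathcal{T}\T(t)$ (still continuous and bounded). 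Since the transition matrix conjugates accordingly, uniform observability of $(A(t),C(t))$ is equivalent to that of $(\bar A,\tilde C(t))$, so it suffices to analyze the constant matrix $\bar A$.

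Next I would compute the Jordan--Chevalley decomposition $\bar A = S + \bar N$. All eigenvalues of $\bar A$ are zero (it is strictly block-upper-triangular after reordering), so $S = 0$ and $\bar A = \bar N$ is itself nilpotent; one checks $\bar A^2$ picks up the term $g_j I_3$ in block $(1,j)$ for $j=2,3,4$ (from $(1,5)\cdot(5,j)$), and $\bar A^3 = 0$, so the nilpotency index is $s=3$. Lemma \ref{lemma:lemmaUOC} then tells us it is enough to exhibit a submatrix $\mathcal O(t)$ of rows of $\tilde C(t),\tilde C(t)\bar N,\tilde C(t)\bar N^2$ whose Gramian over a window is uniformly positive definite. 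Concretely, $\tilde C(t)$ already sees the $\tilde p$ and $\tilde e_j$ coordinates through the blocks $\Pi_i R\T,-p_{ij}\Pi_i R\T$; $\tilde C(t)\bar N$ shifts the first block into the $\tilde v$ slot, giving rows of the form $\Pi_i R\T$ acting on $\tilde v$; and $\tilde C(t)\bar N^2$ again acts on $\tilde p$ and $\tilde e_j$. Stacking all three levels, the block column corresponding to $\tilde v$ is $\blkdiag$-like with entries $\Pi_i R\T$, and the columns for $(\tilde p,\tilde e_1,\tilde e_2,\tilde e_3)$ reproduce $\Theta(t)\bar C\mathcal{T}\T$ up to the extra $g_j$-weighted copies. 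Since each $\Pi_i$ is uniformly positive definite and $R\T$ is orthogonal, the $\tilde v$-block is uniformly positive definite on its own, so the whole Gramian is lower-bounded on the $\tilde v$ subspace pointwise in $t$ (no excitation needed there).

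The main obstacle — and where the geometric hypothesis enters — is showing that the remaining $12$-dimensional block (coordinates $\tilde p,\tilde e_1,\tilde e_2,\tilde e_3$) is uniformly positively bounded below. This block is governed by the constant matrix $\bar C$ of \eqref{eqn:Cbar} weighted on the left by $\Theta(t)=\blkdiag(\Pi_1(t),\dots,\Pi_N(t))$ and the extra $g$-weighted copies coming from $\bar N^2$. I would argue that $\ker \bar C$ (and the analogous kernel coming from the $\bar N^2$ rows) is trivial precisely when three of the landmarks are non-aligned: writing a would-be null vector as $(\xi,\eta_1,\eta_2,\eta_3)$, the equations $\Pi_i(\xi - \sum_j p_{ij}\eta_j)=0$ for all $i$, together with positive-definiteness of $\Pi_i$, force $\xi = \sum_j p_{ij}\eta_j$ for every $i$; the three non-aligned landmarks make the resulting linear system in $(\xi,\eta_1,\eta_2,\eta_3)$ have rank $12$ iff their affine span is a genuine plane, and the ``plane not parallel to the gravity vector'' condition is exactly what is needed for the additional rows contributed by the $g_j$-weighted $\bar N^2$-terms to kill the remaining degree of freedom (the direction orthogonal to that plane, which would otherwise be unobservable). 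Combining the constant rank bound on $\bar C$ with the uniform positive-definiteness of $\Theta(t)$ gives a uniform lower bound $\tilde C\T\tilde C \geq \mu' I$ pointwise, hence \eqref{eqn:gramiancondition1} holds with any $\delta$, and Lemma \ref{lemma:lemmaUOC} concludes. I expect the delicate bookkeeping to be in tracking how the $g_j$ factors propagate through $\bar N^2$ and in verifying the rank-$12$ claim under the stated non-alignment / non-parallelism hypothesis; the rest is routine.
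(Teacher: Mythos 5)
Your plan follows essentially the same route as the paper's proof: conjugation by $\blkdiag(R\T,\dots,R\T)$ to reduce $A(t)$ to a constant nilpotent $\bar A$ with $\bar A^3=0$, application of Lemma~\ref{lemma:lemmaUOC} to the stacked rows of $\bar{\Theta}\bar C$, $\bar{\Theta}\bar C\bar A$, $\bar{\Theta}\bar C\bar A^2$ (with the uniform positive definiteness of the stereo $\Pi_i$ used to strip off $\bar\Theta$), and a rank-$15$ computation hinging on the linear independence of $p_1-p_2$, $p_1-p_3$ and $g$. The only imprecision is in the intermediate bookkeeping: from $\bar C$ alone the kernel on the $(\tilde p,\tilde e_1,\tilde e_2,\tilde e_3)$ block is three-dimensional, namely $\{((n\T p_1)w,\,wn\T):w\in\mathbb{R}^3\}$ with $n$ normal to the landmarks' plane, and it is exactly this subspace that the $g$-weighted rows from $\bar A^2$ eliminate, which does not affect the correctness of the approach.
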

	\begin{proof}
		See  Appendix \ref{sec:lemmaAC2}.
	\end{proof}
	\begin{remark}
		The sufficient observability conditions in Lemma \ref{lemma:AC2} using stereo bearing measurements are mildly stronger than those needed for the local observers in the literature \cite{barrau2017invariant,hua2018riccati,wang2020nonlinear} using 3D landmark position measurements. This is mainly due to the over-parameterization of our observer with the additional auxiliary signals $\hat{e}_i$, which is the paid price for the almost global asymptotic stability results achieved in Theorem \ref{theo:theo1}.
	\end{remark}

	\subsubsection{Monocular-bearing measurements}  \label{sec:Ob_monocular}
	From \eqref{eqn:monocular-bearing},  one can rewrite $\sigma_{yi}$ in \eqref{eqn:y_3} in terms of the estimation errors as
	\begin{align} 
		\sigma_{yi}  	& =   R_{c}\pi(y_i)R_{c}\T     ( \hat{R}\T  (\hat{p}_i-\hat{p} )-p_{c} -  (R\T(p_i-p) - p_{c})   ) \nonumber \\
		&  =          \Pi_i \tilde{p}- p_{i 1}  \Pi_i \tilde{e}_1 -p_{i 2}  \Pi_i \tilde{e}_2 -p_{i 3}  \Pi_i \tilde{e}_3   \label{eqn:def_y_i_2-2}
	\end{align}
	for all $i=1,\dots,N$   with $\Pi_i  =    \pi(R_c  y_i)  $. From the definition  of $\tilde{x}$,  one  obtains $\sigma_y = C(t) \tilde{x}$ with $C(t)$ defined in \eqref{eqn:C3}.
	Note that the matrix $C(t)$  in \eqref{eqn:C3}  is similar to the one in \eqref{eqn:C2}, and the main difference is that the   matrix $\Pi_i$ in \eqref{eqn:C3} is only guaranteed to be positive semi-definite. One can also show that $C(t) = \Theta(t) \bar{C} $ with $\bar{C}$  defined in \eqref{eqn:Cbar} and $\Theta(t)=\blkdiag(\Pi_1,\Pi_2,\dots,\Pi_N)$ being continuous and bounded.  
	\begin{lemma} \label{lemma:AC3}
		Consider the matrices $A(t)$ defined in \eqref{eqn:closed-loop-A} and $C(t)$ defined in \eqref{eqn:C3}. Suppose that there exist three non-aligned landmarks, indexed by $\ell_1,\ell_2,\ell_3$,  among the $N\geq 3$ measurable landmarks, whose plane is not parallel to the gravity vector, and one of following statements holds:
		\begin{itemize}
			\item [i)]  The camera is in motion with bounded velocity and there exists a constant $\epsilon>0$ such that for any time $t^*\geq 0$ and landmark $i\in\{\ell_1,\ell_2,\ell_3\}$, there exists some time $t>t^*$ such that   $\|(R(t)R_c y_i(t)) \times (R(t^*)R_c y_i(t^*))\| \geq \epsilon $. 
			\item [ii)] The camera is motionless and the following matrix has full rank of $15+N$
			\begin{align} \mathcal{O}' = 
				\begin{bmatrix}
					\bar{C} & M\\
					N_1& 0_{3\times N}\\
					N_2 & 0_{3\times N}
				\end{bmatrix} \in  \mathbb{R}^{(3N+6)\times (15+N)}   \label{eqn:Motionless}
			\end{align}
			where $\bar{C}$ is defined in \eqref{eqn:Cbar},  $N_1:=[0_3, 0_3, 0_3, 0_3,I_3, 0_3]$, $ N_2 :=[0_3, g_1 I_3, g_2 I_3, g_3 I_3, 0_3]$, and $M := \blkdiag(p_1-p',\dots,p_N-p')$ with $p'=p+Rp_c$  denoting  the position of the camera in the inertial frame.
		\end{itemize}
		Then, the pair $(A(t),C(t))$ is uniformly observable. 		 
	\end{lemma}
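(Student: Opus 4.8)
The plan is to reduce the uniform observability of $(A(t),C(t))$ to that of a \emph{constant}-$A$ pair so that Lemma~\ref{lemma:lemmaUOC} applies. First I would eliminate the $-\omega^\times$ diagonal blocks in \eqref{eqn:closed-loop-A} by the Lyapunov coordinate change $z=\blkdiag(R,R,R,R,R)\tilde x$: since $\dot R=R\omega^\times$ and $\blkdiag(R,\dots,R)$ is orthogonal with bounded derivative (Assumption~\ref{assum:IMU}), the dynamics $\dot{\tilde x}=A(t)\tilde x$ become $\dot z=\bar A z$, with $\bar A$ the constant nilpotent matrix obtained from \eqref{eqn:closed-loop-A} by deleting the $-\omega^\times$ blocks, and $\sigma_y=\tilde C(t)z$ with $\tilde C(t)=C(t)\blkdiag(R\T,\dots,R\T)$; uniform observability is preserved under this (orthogonal) transformation. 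Using $R\Pi_iR\T=\pi(d_i)$ and $\pi(d_i)(p_i-p')=0_{3\times1}$, where $d_i:=RR_cy_i\in\mathbb{S}^2$ is the inertial-frame bearing and $p':=p+Rp_c$, the matrix $\tilde C(t)$ factors as an orthogonal block-diagonal matrix times $\hat C(t)$ whose $i$-th block row is $[\bar\Pi_i,\,-p_{i1}\bar\Pi_i,\,-p_{i2}\bar\Pi_i,\,-p_{i3}\bar\Pi_i,\,0_3]$ with $\bar\Pi_i:=\pi(d_i)$; since this orthogonal factor leaves the observability Gramian unchanged, $(A(t),C(t))$ is uniformly observable iff $(\bar A,\hat C(t))$ is.

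Next I would apply Lemma~\ref{lemma:lemmaUOC} with the Jordan--Chevalley decomposition $\bar A=S+N$, $S=0_{15}$, $N=\bar A$. A direct computation shows $\bar A$ is nilpotent of index $s=3$ (so all its eigenvalues are real), that $\hat C(t)\bar A$ has $i$-th block row $[0_3,0_3,0_3,0_3,\bar\Pi_i]$, and that $\hat C(t)\bar A^2$ has $i$-th block row $[0_3,g_1\bar\Pi_i,g_2\bar\Pi_i,g_3\bar\Pi_i,0_3]$. Taking $\mathcal O(t)$ to be the stack of these three block rows over $i\in\{1,\dots,N\}$, it remains to verify $\int_t^{t+\delta}\mathcal O\T(\tau)\mathcal O(\tau)\,d\tau\ge\mu I_{15}$ for some $\delta,\mu>0$. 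The key algebraic step is the characterization of the kernel: $\mathcal O(\tau)\tilde x=0$ is equivalent to $\bar\Pi_i(\tau)\big(\tilde p-\sum_{j=1}^3 p_{ij}\tilde e_j\big)=0$, $\bar\Pi_i(\tau)\tilde v=0$ and $\bar\Pi_i(\tau)\sum_{j=1}^3 g_j\tilde e_j=0$ for all $i$, i.e. each bracketed vector is collinear with $d_i(\tau)$ (equivalently with $p_i-p'$).

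For case~(i), within any window on which each $d_{\ell_k}$ takes two $\epsilon$-separated values, a vector collinear with $d_{\ell_k}(\tau)$ for all such $\tau$ must vanish; hence $\tilde v=0$, $\sum_{j}g_j\tilde e_j=0$ and $\tilde p=\sum_{j}p_{\ell_k,j}\tilde e_j$ for $k=1,2,3$. Writing $E:=[\tilde e_1\ \tilde e_2\ \tilde e_3]$, the last three identities read $E(p_{\ell_1}-p_{\ell_2})=E(p_{\ell_1}-p_{\ell_3})=Eg=0$; ``the plane of $\ell_1,\ell_2,\ell_3$ is not parallel to $g$'' is exactly the statement that $\{p_{\ell_1}-p_{\ell_2},\,p_{\ell_1}-p_{\ell_3},\,g\}$ spans $\mathbb{R}^3$, so $E=0$, whence $\tilde p=Ep_{\ell_1}=0$ and $\tilde x=0$. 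For case~(ii), $\bar\Pi_i$ is constant, so $\mathcal O$ is constant and $\int_t^{t+\delta}\mathcal O\T\mathcal O$ is positive definite iff $\ker\mathcal O=\{0\}$; introducing scalars $\alpha_i$ via $\tilde p-\sum_{j}p_{ij}\tilde e_j=\alpha_i(p_i-p')$ (legitimate since $p_i\ne p'$), and using that among three non-aligned landmarks at least two of the $p_i-p'$ are non-collinear --- which forces $\tilde v=0$ and $\sum_{j}g_j\tilde e_j=0$ --- one checks that $\mathcal O\tilde x=0$ iff $\mathcal O'[\tilde x\T,\alpha\T]\T=0$ for some $\alpha$, so full column rank of $\mathcal O'$ gives $\tilde x=0$. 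In both cases kernel triviality, together with the available quantitative margin (the angle $\epsilon$ in case~(i), the rank gap of $\mathcal O'$ in case~(ii)), upgrades to a uniform bound $\int_t^{t+\delta}\mathcal O\T\mathcal O\ge\mu I_{15}$, and Lemma~\ref{lemma:lemmaUOC} then yields the claim.

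The step I expect to be the main obstacle is this last upgrade in case~(i): the hypothesis only guarantees that each bearing \emph{eventually} rotates by $\epsilon$, not within a fixed horizon, so extracting a single pair $(\delta,\mu)$ valid uniformly in $t$ requires care --- typically a contradiction/compactness argument exploiting boundedness of the camera velocity (hence equicontinuity of the $d_i$, and of $\hat C(\cdot)$) to rule out a sequence of degenerating Gramians. The remaining work is bookkeeping: tracking which landmarks and which of the orders $\hat C,\hat C\bar A,\hat C\bar A^2$ are needed to cover each of the five coordinate blocks of $\tilde x$.
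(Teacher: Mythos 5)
Your proposal follows essentially the same route as the paper: the block-diagonal rotation transformation reducing $A(t)$ to the constant nilpotent $\bar A$, the application of Lemma~\ref{lemma:lemmaUOC} with $\mathcal O(t)$ built from $\hat C(t),\hat C(t)\bar A,\hat C(t)\bar A^2$ (your block-row computations match the paper's), the kernel characterization via collinearity with the inertial bearings, and the reduction of the motionless case to full column rank of $\mathcal O'$. The step you flag as the main obstacle --- upgrading pointwise kernel triviality to a uniform Gramian bound in case~(i) --- is exactly where the paper also spends its effort, and it resolves it with the contradiction/compactness device you describe (sequences $t_q$, $z_q\to\bar z$, equicontinuity of $\|\mathcal O(t_q+\cdot)\bar z\|^2$ from the bounded camera velocity, then letting $\bar\delta$ grow to contradict hypothesis~(i)), so your plan is sound and not a different proof.
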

	\begin{proof}
		See  Appendix \ref{sec:lemmaAC3}.
	\end{proof} 
	\begin{remark}
		The proof of this lemma relies on the application of the technical Lemma \ref{lemma:lemmaUOC}. For the condition i), using the fact $R(t)R_c y_i(t) = {(p_i-p'(t))}/{\|p_i-p'(t)\|}$, it follows that the camera is not indefinitely moving in a straight line passing through one of landmarks $\ell_1,\ell_2,\ell_3$. Note that this condition involves an extra condition on the motion of the camera, with respect to Lemma  \ref{lemma:AC2}, to generate sufficient information for uniform observability. 	 	
		Condition ii) also holds when the camera is in motion and the minimum number of required landmarks is $5$ for the  matrix $\mathcal{O}'$ to have a full rank of $N+15$.
	\end{remark}
	\begin{proposition} \label{pro:non-observable}
		Consider the case where the camera is motionless with  $N\geq 5$ measurable non-aligned landmarks. Suppose that none of the following scenarios hold:
		\begin{itemize} 
			\item[(a)] All the landmarks are located in the same plane.				
			\item[(b)] There exist three non-aligned landmarks and the rest of landmarks are located in the plane parallel to the gravity vector and contains two of these three landmarks.
			\item[(c)] There exist three non-aligned landmarks and the rest of the landmarks are aligned with one of these three landmarks and the position of the camera. 
			\item[(d)] There exist three non-aligned landmarks and the rest of the landmarks are either located in the plane parallel to the gravity vector and contains two of these three landmarks, or aligned with the third of these three landmarks and the position of the camera.
		\end{itemize}
		Then, the matrix  $\mathcal{O}'$  defined in \eqref{eqn:Motionless}  has full rank.
	\end{proposition}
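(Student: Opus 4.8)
The plan is to prove the contrapositive: assuming the matrix $\mathcal{O}'$ of \eqref{eqn:Motionless} is rank deficient, I would show the landmark set must be in one of the configurations (a)--(d). Since $N\geq 5$ gives $3N+6>15+N$, ``full rank'' means full column rank, so it suffices to describe $\ker\mathcal{O}'$. Writing a null vector as $(x,\beta)$ with $x=[x_p\T,x_1\T,x_2\T,x_3\T,x_v\T]\T\in\mathbb{R}^{15}$, $\beta\in\mathbb{R}^N$, and setting $Y:=[x_1,x_2,x_3]\in\mathbb{R}^{3\times3}$, the blocks $N_1$, $N_2$ and $\bar C$ (see \eqref{eqn:Cbar}) force, respectively, $x_v=0$, $Yg=0$, and $x_p-Yp_i=-\beta_i(p_i-p')$ for every $i$. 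Hence $\mathcal{O}'$ is rank deficient if and only if there is a nonzero triple $(x_p,Y,\beta)$ with $Yg=0$ and $(x_p-Yp_i)\times(p_i-p')=0_{3\times1}$ for all $i$, i.e.\ every landmark lies on the variety $\mathcal{Z}:=\{p\in\mathbb{R}^3:\ x_p-Yp\ \text{and}\ p-p'\ \text{are collinear}\}$, which is cut out by the three $2\times2$ minors of $[\,x_p-Yp\ \ p-p'\,]$ (quadratics in $p$) and always contains the camera center $p'$. The whole problem is thus reduced to: classify the landmark sets of $N\geq 5$ non-aligned points lying on such a $\mathcal{Z}$.

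The rest is a case analysis on $\mathrm{rank}(Y)$. If $Y=0$ then $x_p=\beta_i(p_i-p')$ with $x_p\neq0$ (otherwise the null vector is trivial), which puts all $p_i$ on one line through $p'$, contradicting non-alignment; so $Y\neq0$, and since $Yg=0$ with $g\neq0$ we have $\mathrm{rank}(Y)\in\{1,2\}$. For $\mathrm{rank}(Y)=1$ I would write $Y=ab\T$ with $b\T g=0$ and split on whether $x_p\in\operatorname{span}(a)$: in that sub-case $\mathcal{Z}$ is the union of the plane $\{b\T p=\mathrm{const}\}$ --- whose normal $b$ is orthogonal to $g$, so the plane is parallel to the gravity vector --- and the line through $p'$ in direction $a$; distributing the $N\geq5$ landmarks over these two pieces (and re-selecting the triple of non-aligned landmarks so that two of them sit on the plane) yields exactly (a), (b), (c) or (d). In the complementary sub-case $x_p\not\parallel a$, solving $x_p-Yp=\mu(p-p')$ for $p$ as a rational function of $\mu$ of degree $\leq2$ shows $\mathcal{Z}$ is a conic, hence planar, and all landmarks are coplanar, giving (a).

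The case $\mathrm{rank}(Y)=2$ is where I expect the real work. Here $\ker Y=\operatorname{span}(g)$ and $Y$ carries the eigenvalue $0$ together with two further (possibly complex, possibly coincident, possibly non-semisimple) eigenvalues; parametrizing $\mathcal{Z}$ by $p(\mu)=(Y+\mu I_3)^{-1}(x_p+\mu p')$ and expanding $(Y+\mu I_3)^{-1}$ through its spectral (partial-fraction) decomposition, one must show that, under the standing hypotheses --- in particular that the landmarks are \emph{not} coplanar, since otherwise we are already in case (a) --- the locus $\mathcal{Z}$ necessarily degenerates into a plane through two of the non-aligned landmarks (which is parallel to $g$ because $g\in\ker Y$) together with at most two lines through $p'$ directed along eigenvectors of $Y$, from which (b), (c) or (d) can be read off landmark by landmark. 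Discarding the remaining sub-cases --- a genuinely non-planar cubic locus, a non-diagonalizable $Y$, or complex eigenvalues --- as incompatible with having $N\geq5$ non-coplanar landmarks avoiding (a)--(d) is the main obstacle; it rests on carefully exploiting the interaction between the algebraic constraint $Yg=0$ and the geometry of the configuration relative to $p'$ and the gravity direction $g$. Assembling the three cases then establishes the proposition.
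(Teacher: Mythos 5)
Your reduction of the problem is correct and is in fact the natural ``dual'' of what the paper does: a vector $(x,\beta)$ with $x=[x_p\T,x_1\T,x_2\T,x_3\T,x_v\T]\T$ lies in $\ker\mathcal{O}'$ iff $x_v=0$, $Yg=0$ and $x_p-Yp_i=-\beta_i(p_i-p')$ for all $i$ (with $Y=[x_1,x_2,x_3]$), so rank deficiency is equivalent to all landmarks lying on the locus $\mathcal{Z}=\{p:\ x_p-Yp\ \|\ p-p'\}$ for some nonzero $(x_p,Y)$ with $Yg=0$. Your $Y=0$ case and the two rank-one subcases are essentially sound (the conic argument for $x_p\notin\operatorname{span}(a)$ works because a curve parametrized by degree-two rational functions with a common denominator spans a plane, giving case (a); the $x_p\in\operatorname{span}(a)$ subcase gives the plane-parallel-to-$g$ union a line through $p'$, though matching the resulting landmark partitions to the precise wording of (b)--(d) still needs to be written out). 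The genuine gap is the $\operatorname{rank}(Y)=2$ case, which you explicitly leave open and which is the generic situation: there $\mathcal{Z}$ is the intersection of three quadrics, a priori a space curve (e.g.\ a twisted cubic), and the claim that it must degenerate into a plane parallel to $g$ plus at most two lines through $p'$ directed along eigenvectors of $Y$ is exactly the content of the proposition in that regime. Without proving this degeneration -- including disposing of complex or non-semisimple spectra of $Y$ -- the contrapositive is not established, so the proof is incomplete at its hardest step.

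For comparison, the paper avoids the algebro-geometric classification entirely: it performs row and column operations to bring $\mathcal{O}'$ to the block upper-triangular form $\mathcal{O}''$ with the invertible $15\times15$ block $\bar{\mathcal{O}}'$ of \eqref{eqn:barO'} on the diagonal, so that full rank of $\mathcal{O}'$ is equivalent to the $3(N-3)\times N$ matrix $M''$ in \eqref{eqn:M''} having rank $N$. The columns of $M''$ are explicit scalar multiples of the difference vectors $p_i-p_j$ and $p_i-p'$, expressed through the barycentric-type coefficients $\alpha_{ij}$ of $p_i-p_1$ in the basis $\{u_1,u_2,g\}$, and the four scenarios (a)--(d) are read off directly as the possible linear dependencies among these columns. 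That route keeps the whole argument at the level of finite-dimensional linear algebra on difference vectors, which is where your $\mathcal{Z}$-based argument would also have to land after the degeneration step; if you want to complete your version, you would need to show that five non-aligned points on $\mathcal{Z}$ with $\operatorname{rank}(Y)=2$ force $\mathcal{Z}$ to be reducible, and the cleanest way to do that is essentially to redo the paper's column analysis.
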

	
	\begin{proof}
		See Appendix \ref{sec:non-observable}
	\end{proof}
	All the four cases stated in Proposition \ref{pro:non-observable} are summarized in Fig \ref{fig:non-observable}. Note that the cases (a) and (b) are independent from the location of the camera.  
	According to the Lemma \ref{lemma:AC3}, if the matrix  $\mathcal{O}'$ has full rank, one can conclude that the pair $(A(t),C)$ is uniformly observable. 
	Note that the state estimation in this static case is also known as the  static Perspective-n-Point (PnP) problem \cite{lepetit2009epnp,hamel2018riccati}. In this scenario, one aims at determining the pose (position and orientation) of a camera given its intrinsic parameters and a set of $N$ correspondences between 3D points and their 2D projections. 
	\begin{figure}[!ht]
		\centering
		\subfloat[]{\includegraphics[width=0.38\linewidth]{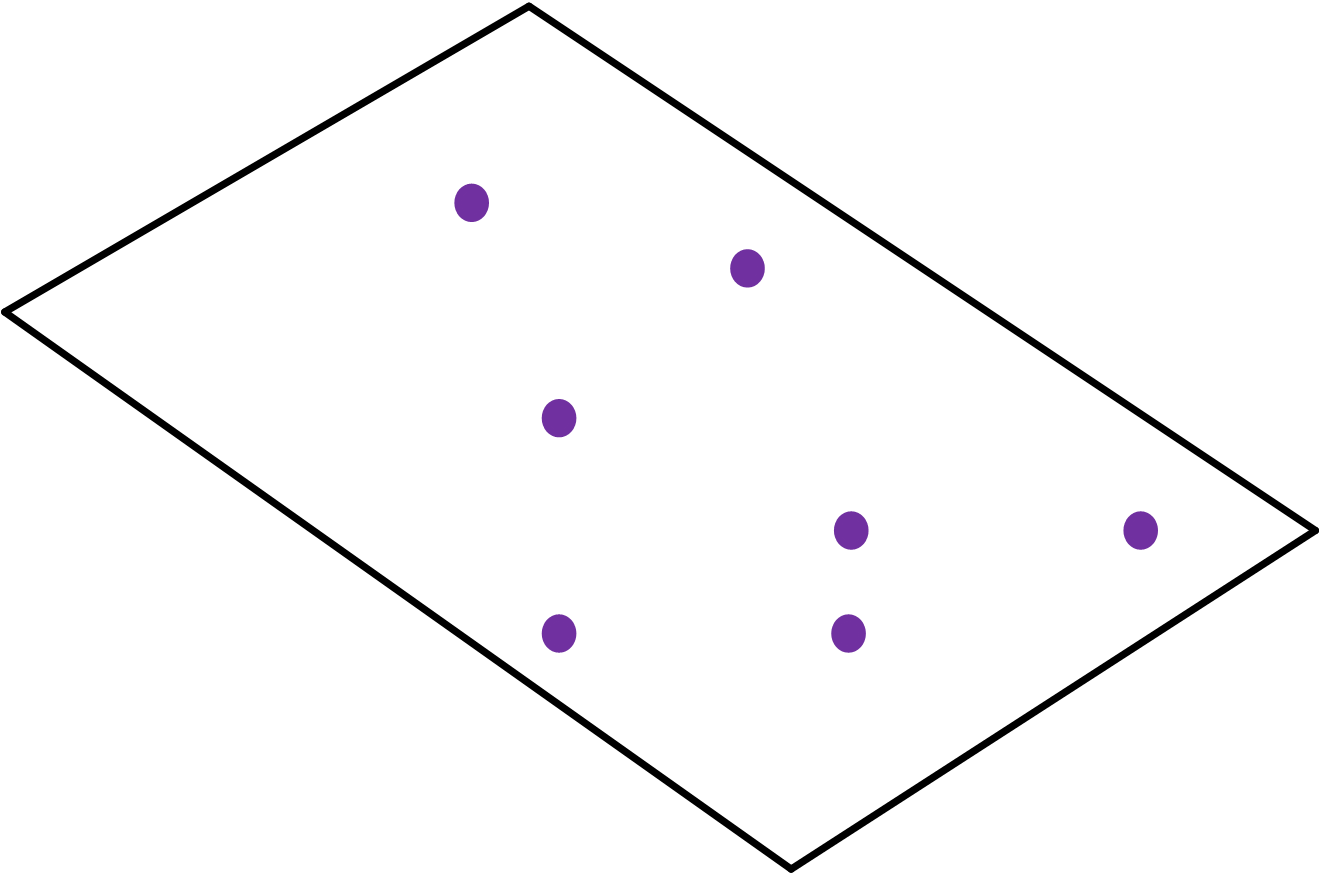}}~~~
		\subfloat[]{\includegraphics[width=0.41\linewidth]{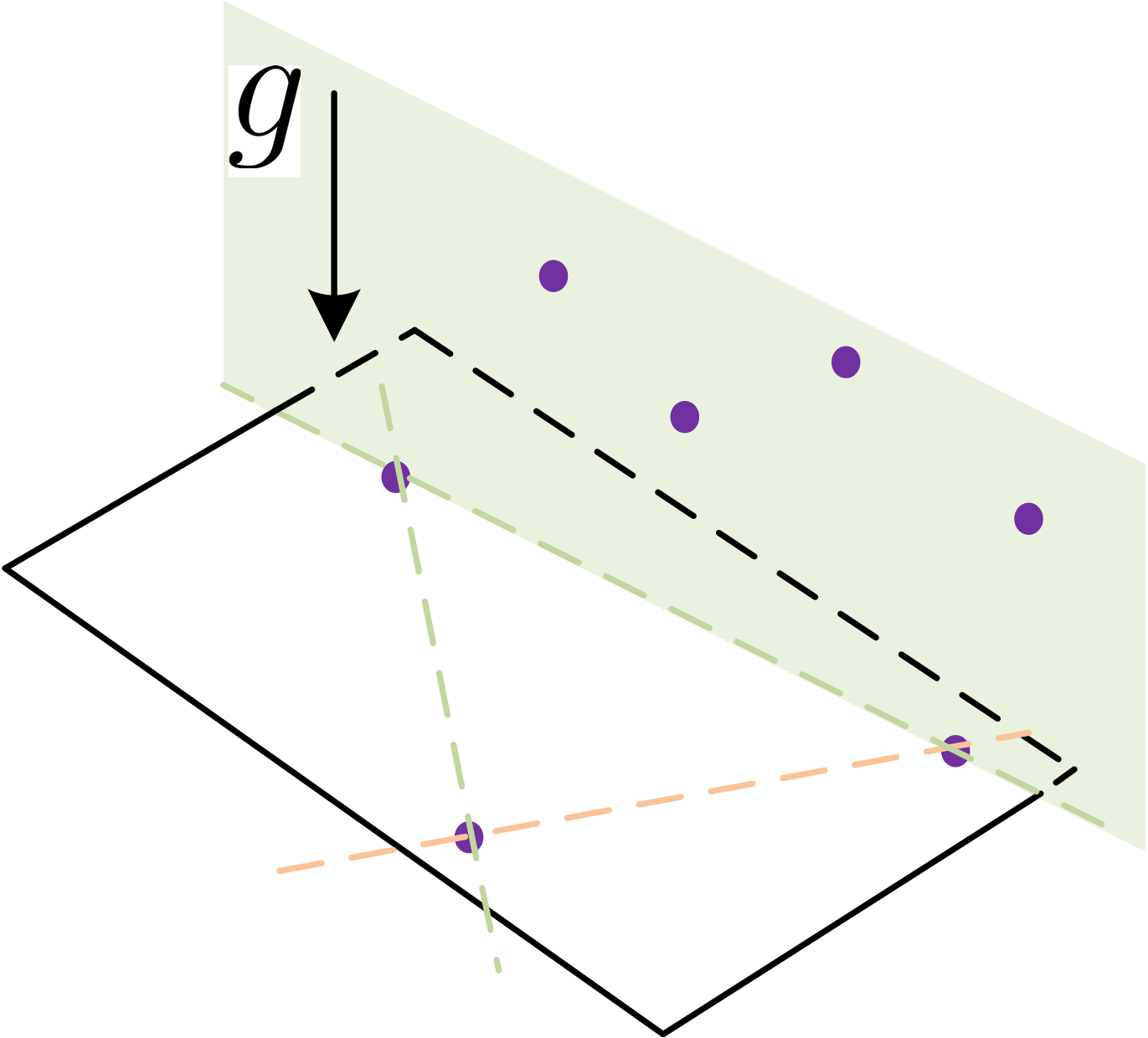}}\\[-0.35cm]
		\subfloat[]{\includegraphics[width=0.45\linewidth]{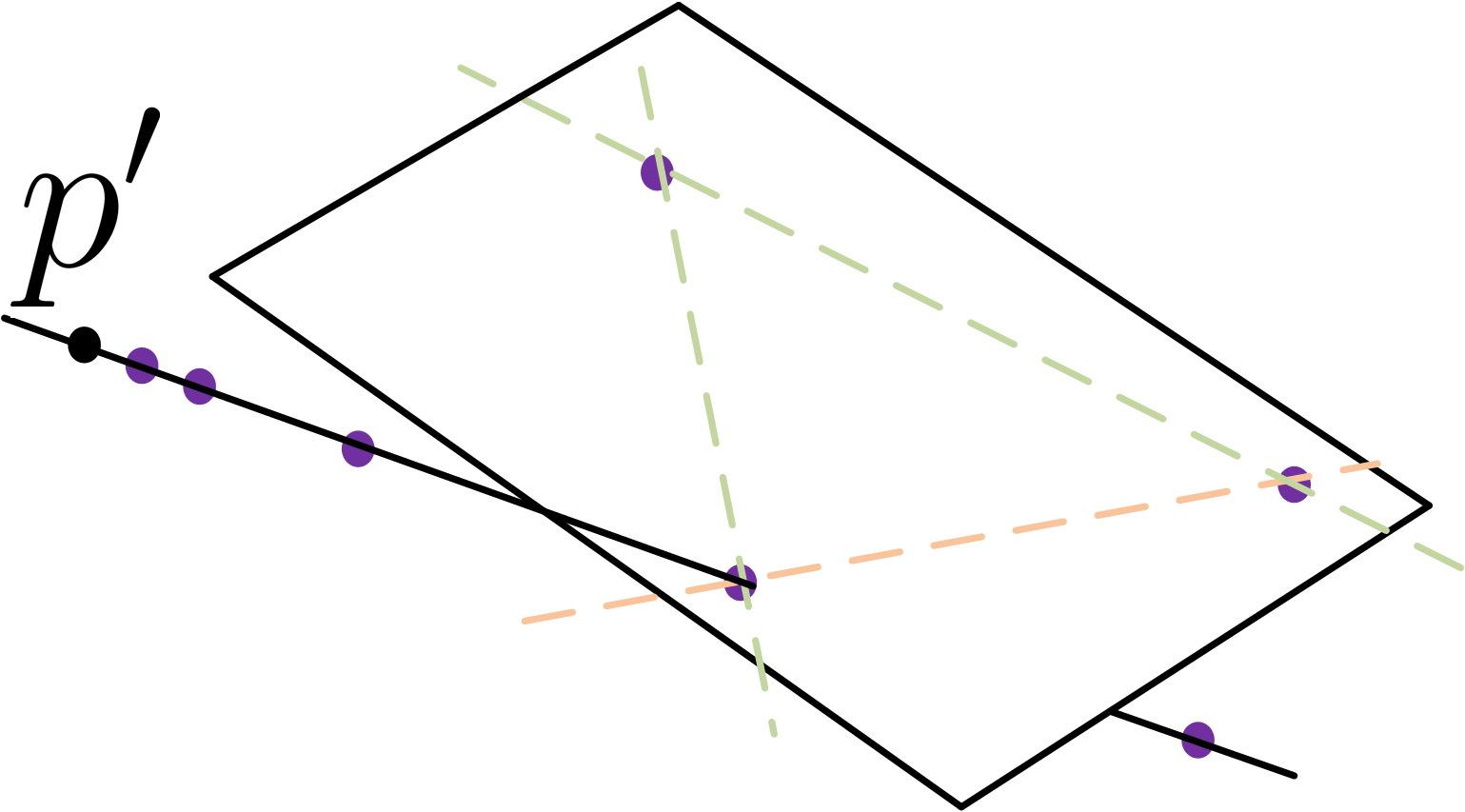}}~
		\subfloat[]{\includegraphics[width=0.46\linewidth]{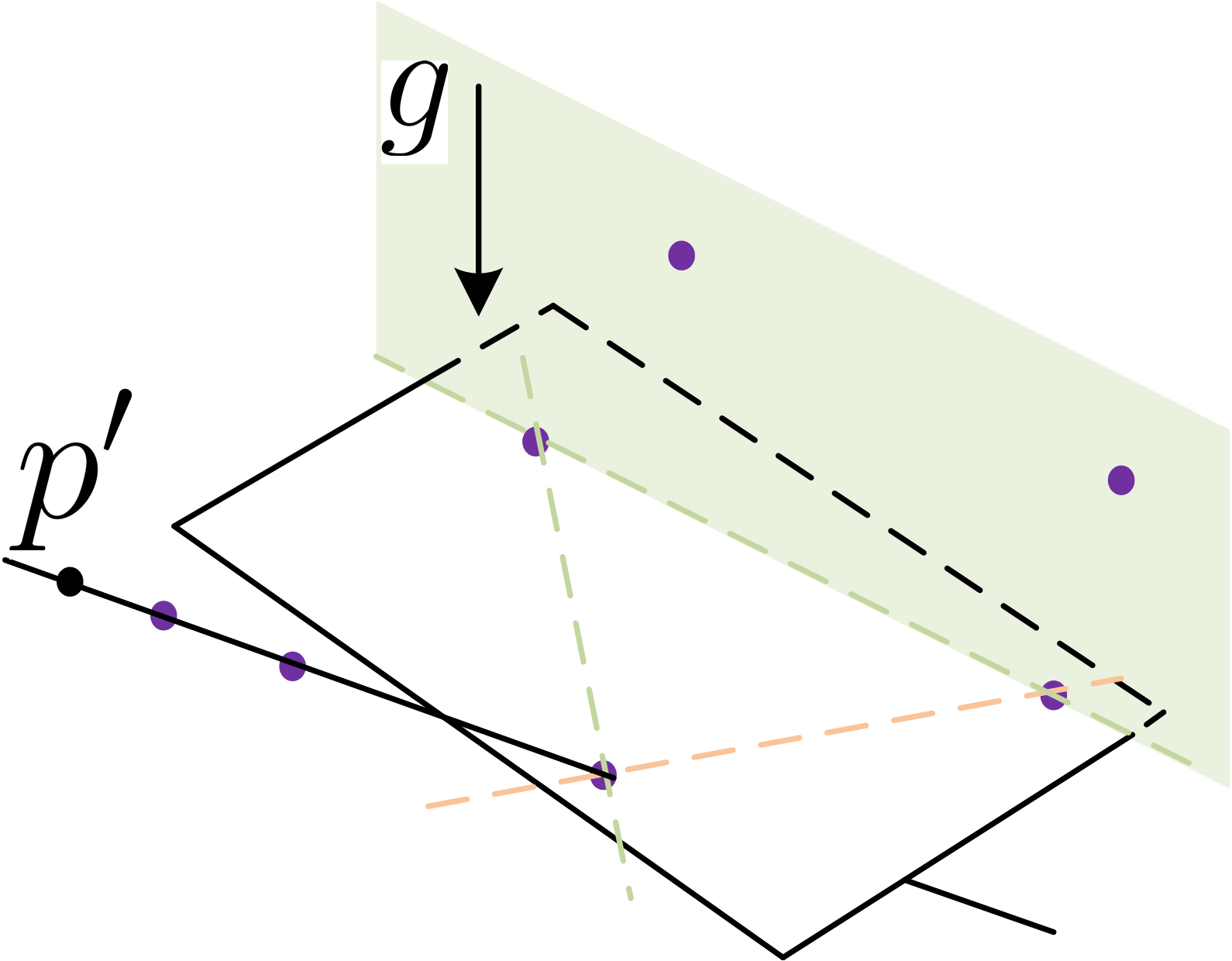}}
		\caption{The four possible cases of Proposition \ref{pro:non-observable} where the matrix  $\mathcal{O}'$  in \eqref{eqn:Motionless} does not have full rank. The locations of  $N\geq 5$ non-aligned landmarks are depicted by purple dots, and the location of the motionless monocular camera is depicted by a black dot.}
		\label{fig:non-observable}
	\end{figure}

	\section{Hybrid Observer   Using Intermittent Vision-Based Measurements} \label{sec:V}
	
	In practical applications, the IMU measurements can be obtained at a high rate, while the vision-based measurements are often obtained at a much lower rate due to the hardware design of the vision sensors and the heavy image processing computations. Hence, the IMU measurements can be assumed as continuous and the measurements from the vision systems are sampled intermittently. This motivates us to redesign the proposed continuous nonlinear observer in terms of continuous IMU and intermittent vision-based measurements. 
	
	\begin{assumption}
		We assume that the vision-based measurements are available at strictly increasing time instants $\{t_{k}\}_{k\in \mathbb{N}_{>0}}$, and there exist constants $0<T_m\leq T_M<\infty$ such that $t_1 < T_M$ and $T_m \leq t_{k+1} -t_{k} \leq T_M$ for all $k\in \mathbb{N}_{>0}$. 
	\end{assumption} 
	This assumption implies that the time difference between two consecutive vision-based measurements are lower and upper bounded. The positive lower bound $T_m$ is required to avoid Zeno behaviors. Note that, if $T_m = T_M$, the vision-based measurements are sampled periodically.
	
	Motivated by the work in \cite{wang2020nonlinear} and making use of the framework of hybrid dynamical systems presented in \cite{goebel2009hybrid,goebel2012hybrid}, we propose the following hybrid nonlinear observer modified from  \eqref{eqn:observer_1} as
	\begin{align}
		\underbrace{\begin{array}{ll}
				\dot{\hat{R}}~  & = \hat{R}(\omega +\hat{R}\T \sigma_R)^\times \\ 
				\dot{\hat{p}}~  & = \sigma_R^\times  \hat{p} + \hat{v}   \\
				\dot{\hat{v}}~  & = \sigma_R^\times \hat{v}  +   \hat{g}   + \hat{R} a   \\ 
				\dot{\hat{e}}_i & = \sigma_R^\times  \hat{e}_{i},   \quad i\in{1,2,3}    
		\end{array}}_{t\in[t_{k-1},t_k],~ k\in \mathbb{N}_{>0}}	
		~
		\underbrace{\begin{array}{ll}
				\hat{R}^+  & = \hat{R}  \\ 
				\hat{p}^+ & =  \hat{p}   + \hat{R}   K_p \sigma_y  \\
				\hat{v}^+ & =   \hat{v}  + \hat{R} K_v \sigma_y  \\ 
				\hat{e}_i^+ & =  \hat{e}_{i}  + \hat{R}K_i \sigma_y  
		\end{array}	}_{t\in\{t_k\},~ k\in \mathbb{N}_{>0}}	
		\label{eqn:observer_2}  
	\end{align}
	where $k_R>0$, $\hat{g}:=\sum\nolimits_{i=1}^{3} g_i \hat{e}_i$, $\sigma_R$ is given in \eqref{eqn:innovation_R1}, and the vector $\sigma_y$ is given in \eqref{eqn:y_2} for stereo-bearing measurements and in \eqref{eqn:y_3} for monocular-bearing measurements. Let $K:=[K_p\T, K_1\T, K_2\T,K_3\T, K_v\T]\T $, which is designed as
	\begin{equation}
		K = PC\T(t)(C(t)PC\T(t) + Q^{-1}(t))^{-1} \label{eqn:Kt2}
	\end{equation} 	 
	where $P$ is the solution to the following  continuous-discrete Riccati equations (CDRE) 
	\begin{subequations}\label{eqn:CDRE}
		\begin{align}
			\dot{P} ~~&= A(t)P + PA(t)\T + V(t),    &t\in [t_{k},t_{k+1}] \label{eqn:CDRE-C}\\
			P^+ &= (I-KC(t)) P, &t \in \{t_k\} \label{eqn:CDRE-D}
		\end{align}
	\end{subequations}
	where  $A(t)$ is given by  \eqref{eqn:closed-loop-A}, $P(0)$ is symmetric positive definite, and $Q(t)  \in \mathbb{R}^{3N\times 3N},  V(t) \in \mathbb{R}^{15\times 15}$ are continuous, bounded and  uniformly positive definite. Then, as per \cite{deyst1968conditions,barrau2017invariant} and \cite[Lemma 7]{wang2020nonlinear}, the solution $P$ to  \eqref{eqn:CDRE} exists, and there exist constants $0< p_m \leq p_M < \infty $ such that $p_m I_{15} \leq P \leq p_M I_{15}$ for all $ t \geq 0$ if there exist positive constants $\mu\in \mathbb{R}$ and $\delta\in \mathbb{N}_{>0}$ such that 
	$
	W_o^h(t_j,t_{j+\delta}) = \sum_{i=j}^{j+\delta} \Phi\T(t_i,t_j) C\T(t_i) C(t_i) \Phi(t_i,t_j) > \mu I_{15}, \forall j\in \mathbb{N}_{>0},
	$ 
	where $\Phi(t,\tau)$ denotes the state transition matrix associated to $A(t)$. Note that $W_o^h$ is the discrete version of $W_o$ in \eqref{eqn:gramiancondition}. Hence, the same conditions as in Lemma \ref{lemma:AC2} and \ref{lemma:AC3} can be derived for the existence of the positive constants $\mu\in \mathbb{R}$ and $\delta\in \mathbb{N}_{>0}$  such that $W_o^h(t_j,t_{j+\delta})   > \mu I_{15}$ for all $ j\in \mathbb{N}_{>0}$.  
	
	In view of \eqref{eqn:INS_system}, \eqref{eqn:innovation_R1} and \eqref{eqn:observer_2}, one obtains the following hybrid closed-loop system: 
	\begin{subequations}\label{eqn:hybridclosed-loop}	
		\begin{align}  
			&\left.
			\begin{array}{rl}
				\dot{\tilde{R}}  &= \tilde{R}( -k_R \psi_a(M\tilde{R})-\Gamma(t) \tilde x)^\times \\
				\dot{ \tilde{x}}  & =A(t)  \tilde{x}
			\end{array}\right\}  &   t\in [t_{k},t_{k+1}] 	\label{eqn:hybridclosed-loop-flow}\\
			&\left.
			\begin{array}{ll}
				\tilde{R}^+ &= \tilde{R} \\
				\tilde{x}^+  &= (I_{15}-KC(t))  \tilde{x}  
			\end{array}\right\}   &   t \in \{t_k\}  
			\label{eqn:hybridclosed-loop-jump}
		\end{align}
	\end{subequations}	
	where the AGAS proof for the equilibrium $(I_3,0_{15\times 1})$ of hybrid system \eqref{eqn:hybridclosed-loop} can be easily conducted by combing the proof of Theorem \ref{theo:theo1} and \cite[Theorem 9]{wang2020nonlinear}, and is therefore omitted here. The proposed hybrid nonlinear observer for vision-aided INSs   has been summarized in Algorithm \ref{algo:1}.

	\begin{algorithm}[!ht] 
		\caption{ Nonlinear observer for vision-aided INSs}
		\begin{algorithmic}[1] \label{algo:1}
			\renewcommand{\algorithmicrequire}{\textbf{Input:}}
			\renewcommand{\algorithmicensure}{\textbf{Output:}}
			\REQUIRE  Continuous IMU measurements, and intermittent visual measurements  at the time instants $\{t_k\}_{k\in \mathbb{N}_{>0}}$. 		
			\ENSURE  $\hat{R}(t),\hat{p}(t)$ and $\hat{v}(t)$ for all $t\geq 0$
			\FOR {$k\geq 1$}  	
			\WHILE{$t\in [t_{k-1},t_k]$} \setstretch{1.1}
			\STATE \(\dot{\hat{R}}  = \hat{R}(\omega +    \hat{R}\T \sigma_R)^\times  \)    
			\mycomment{ $\sigma_R$ defined in \eqref{eqn:innovation_R1} }
			\STATE \(\dot{\hat{p}} ~ = \sigma_R^\times \hat{p}  + \hat{v} \)  
			\STATE \(\dot{\hat{v}} ~ = \sigma_R^\times \hat{v} + \sum_{i=1}^3 g_i \hat{e} 
			_i  + \hat{R}a  \)  
			\STATE \(\dot{\hat{e}}_i  = \sigma_R^\times \hat{e}_i\)   
			\mycomment{ for all $i=1,2,3$ }
			\STATE \(\dot{P}  =  {A}(t) P+ P {A}\T(t) + V(t) \)  
			\mycomment{ $ {A}$ defined  in \eqref{eqn:closed-loop-A}  and $V(t)$ being uniformly positive definite \hfill }
			\ENDWHILE 	 	 
			\STATE Obtain the vector $\sigma_y$ and matrix $C(t)$ from the visual measurements at time $t_k$
			\mycomment{Using \eqref{eqn:y_2} and \eqref{eqn:C2} for stereo-bearing measurements, or \eqref{eqn:y_3} and \eqref{eqn:C3} for monocular-bearing measurements \hfill }  
			\STATE 	\(K = P {C}\T(t_k)({C}(t_k)P {C}\T(t_k) + Q^{-1}(t_k))^{-1} \)  
			\mycomment{ $Q(t)$ being uniformly positive definite \hfill}
			\STATE 	Compute matrices $K_p,K_1,K_2,K_3$ and $K_v$ from $K$   \mycomment{using $K=[
				K_p\T, K_1\T, K_2\T,   K_3\T, K_v\T]\T $ \hfill }  
			\STATE \( \hat{R}^+~ =\hat{R}\)  
			\STATE \(\hat{p}^+~ = \hat{p} +   \hat{R}K_p  \sigma_y \) 
			\STATE \( \hat{v}^+~ = \hat{v} +    \hat{R}K_v    \sigma_y  \) 
			\STATE \(\hat{e}_{i}^+~  =   \hat{e}_{i}  + \hat{R}K_{i} \sigma_y \) 
			\mycomment{ for all $i=1,2,3$ }
			\STATE \( P^+~ = (I_{15}-KC)P\)  				 
			\ENDFOR
		\end{algorithmic}
	\end{algorithm} 
	\begin{remark}
		Note that our proposed observer \eqref{eqn:observer_2} is deterministic and the gain matrix $K$ is designed based on the CDRE \eqref{eqn:CDRE} with any uniformly positive definite matrices $V(t)$ and $Q(t)$. However, it is possible to relate (locally) the matrices $V(t)$ and $Q(t)$ to the measurements noise properties leading to a local sub-optimal design of the observer gains in the spirit of the Kalman filter. As in \cite{wang2020nonlinear}, let $\omega$ and $a$ be the noisy measurements and replace $\omega$ by $\omega+n_\omega$ and $a$ by $a+n_a$ in \eqref{eqn:INS_system}   with $n_\omega$ and $n_a $ denoting the noise signals associated to $\omega$ and $a$, respectively.  In view of \eqref{eqn:INS_system}, \eqref{eqn:observer_2} and \eqref{eqn:hybridclosed-loop-jump},   the time derivative of $\tilde{x}$  can be approximated around $\tilde{x}=0$ by $\dot{\tilde{x}} \approx A(t) \tilde{x} + G_t n_x$ 	with  $n_x = [n_\omega\T,n_a\T]\T$, $A(t)$ defined in \eqref{eqn:closed-loop-A} and
		\begin{align*} \arraycolsep=1.5pt\def\arraystretch{1.1}
			G_t= 
			-\begin{bmatrix}
				(\hat{R}\T \hat{p})^\times & 
				(\hat{R}\T \hat{e}_1)^\times &  
				(\hat{R}\T \hat{e}_2)^\times &  
				(\hat{R}\T \hat{e}_3)^\times & 					
				(\hat{R}\T \hat{v})^\times  \\
				0_{3} & 0_{3} & 0_{3} & 0_{3} & -I_{3}
			\end{bmatrix}\T . 
		\end{align*} 
		Moreover, replacing $y_i$ in \eqref{eqn:monocular-bearing} by $y_i + n_{y_i}$ for each   bearing measurement with $n_{y_i}$  denoting the noise signals. From \eqref{eqn:y_3}, $\sigma_{y_i}$ can be rewritten in the form of $\sigma_{y_i} = \Pi_i \tilde{p} - \sum_{j=1}^3\Pi_i \tilde{e}_j + \|p-p_i\|\Pi_i n_{y_i}$, and $\sigma_y$ can be approximated around $\tilde{x}=0$ by $\sigma_y   \approx C(t)\tilde{x} + M_t  n_y$ with $n_y = [n_{y_1}\T, \dots, n_{y_N}\T]\T$ and $M_t = \blkdiag(\|\hat{p}-\hat{p}_1\|  \Pi_1, \dots,   \|\hat{p}-\hat{p}_N\|  \Pi_N)$. Then, matrices $V(t)$ and $Q(t)$ can be related to the covariance matrices of the measurements noise as follows:
		\begin{subequations}\label{eqn:V-Q}
			\begin{align}
				V(t)  &= G_t \Cov\left( n_x\right) G_t\T \label{eqn:V-Q_V}\\ 
				Q^{-1}(t)  &=  M_t\Cov(n_y)M_t\T. \label{eqn:V-Q_Q}
			\end{align}	
		\end{subequations}
		In practice, a small positive definite matrix can be added to $V(t)$ and $Q^{-1}(t)$ to guarantee that $V(t)$ and $Q^{-1}(t)$ are uniformly positive definite.
	\end{remark}

	\section{Simulation results}\label{sec:simulation} 
	
	In this simulation, we consider an autonomous vehicle moving on the `8'-shape trajectory described by $p(t) = 2[\sin(t), \sin(t)\cos(t), 1]\T$ (m)  with the initial attitude $R(0)=I_3$ and angular velocity  $\omega(t) =  [-\cos(2t), 1, \sin(2t)]\T$ (rad/s). Let $g=[0,0,-9.81]\T$  (m/s$^2$) be the gravity vector expressed in the inertial frame, and assume that there are $N=5$ landmarks randomly selected. Different types of continuous vision-based measurements are generated using \eqref{eqn:stereo-bearing} and \eqref{eqn:monocular-bearing}. 
	For comparison purposes, 3D landmark position measurements $y_i=   R\T(p_i-p), i= 1,2 ,\dots,N$ are considered.
	In this case, the vector $\sigma_y=[\sigma_{y 1}\T,\sigma_{y2}\T,\dots,\sigma_{yN}\T]\T\in \mathbb{R}^{3N}$ used in \eqref{eqn:observer_1} is designed as
	\begin{equation}
		\sigma_{yi}  =  \hat{R}\T(\hat{p}_i - \hat{p}) - y_i, \quad    i=1,2,\dots,N  \label{eqn:y_1}
	\end{equation}
	with $\hat{p}_i  = \sum_{j=1}^3 p_{ij} \hat{e}_j$. 
	It follows that $\sigma_y = C \tilde{x}$ with a constant matrix $C=\bar{C}$ and $\bar{C}$ defined in \eqref{eqn:Cbar}\footnote{In practice, 3D landmark positions can be obtained, for instance, using  stereo cameras, and the sufficient condition for uniform observability of the pair $(A(t),C)$ with $A(t)$   in \eqref{eqn:closed-loop-A} is the same as the one in Lemma \ref{lemma:AC2}.}. 
	
	The same initial conditions are considered for each case as: $\hat{R}(0) = \exp(0.5\pi u^\times)$ with $u\in \mathbb{S}^2$, $\hat{v}(0)=\hat{p}(0) =  0_{3\times 1}$, $\hat{e}_i(0) = e_i, i=1,2,3$, and $P(0) = I_{15}$. The gain parameters for each case are chosen as $\rho_1 = 0.5, \rho_2 = 0.3, \rho_3=0.2$ and $k_R = 1$ for $\sigma_R$ in \eqref{eqn:innovation_R1}, and $Q  = 10^3I_{3N}, V  =  10^{-4}I_{15} $ for the CRE \eqref{eqn:CRE}. Simulation results are shown in Fig. \ref{fig:simulation_error}. As one can see, the estimated states from all the cases converge, after a few seconds, to the vicinity of the real states. Roughly speaking, with the same tuning parameters, the performances of the proposed continuous observer with three types of vision-based measurements is quite similar.

	\begin{figure}[!ht]
		\centering
		\includegraphics[width=0.91\linewidth]{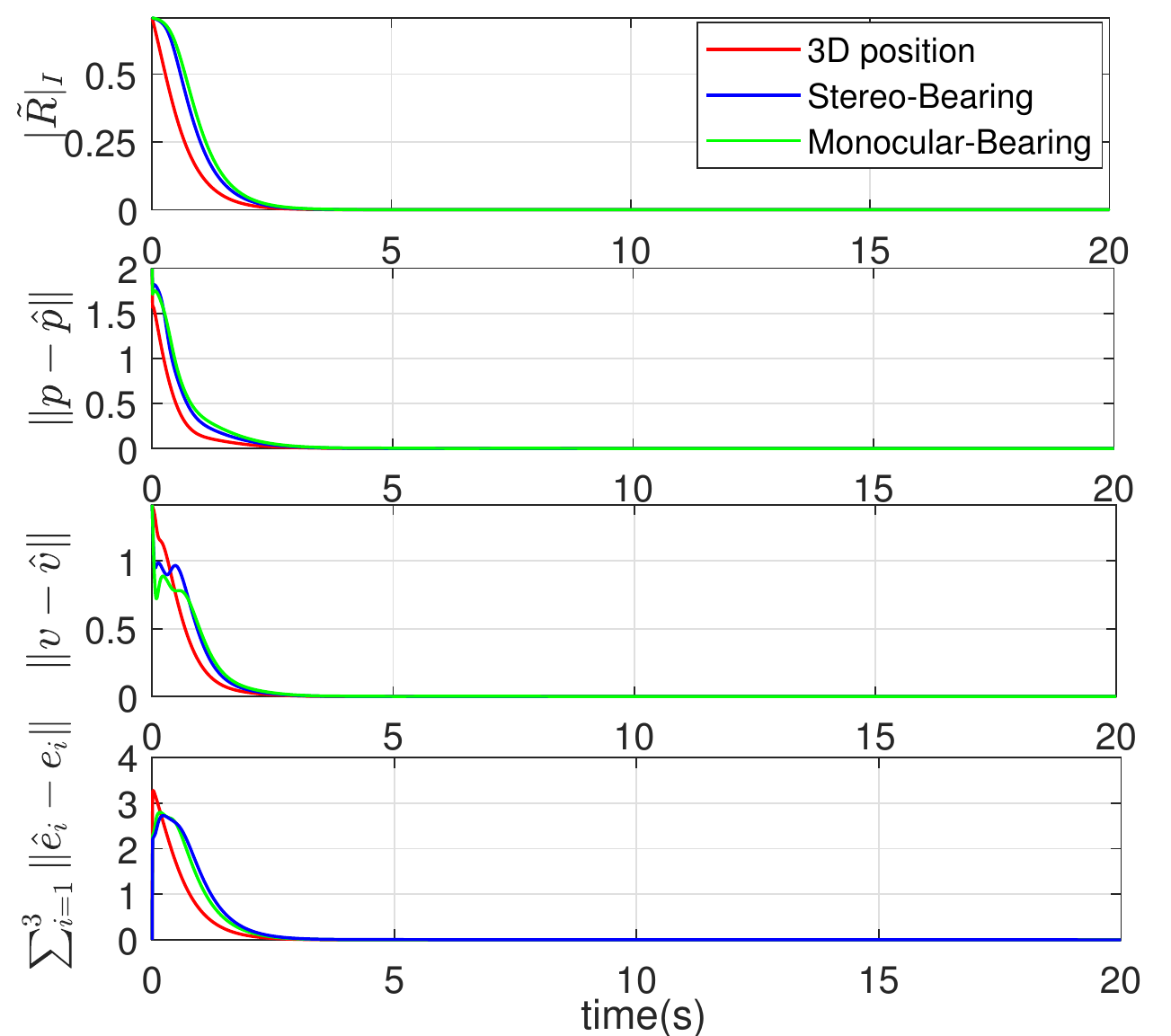}
		\caption{Simulation results of the nonlinear observer \eqref{eqn:observer_1} using 3D landmark position, stereo-bearing and monocular-bearing measurements.}
		\label{fig:simulation_error}
	\end{figure}

	\section{Experimental Results} \label{sec:experimental} 
	To further validate the performance of the proposed observer, two sets of experiments have been presented 
	using the data from the EuRoc dataset \cite{Burri25012016}, where the trajectories are generated by a real flight of a quadrotor. This dataset includes stereo images, IMU measurements, and ground truth obtained from Vicon motion capture system. More details about the EuRoC dataset can be found in \cite{Burri25012016}. Since the sampling rate of the stereo camera (20Hz) is much lower than that of the IMU (200Hz), the proposed hybrid observer \eqref{eqn:observer_2} was implemented.  
	
	The accelerometer and gyro measurements are compensated using the biases provided in the dataset. The features are tracked via the Kanade-Lucas-Tomasi (KLT) tracker \cite{shi1994good}, with the RANSAC  outliers removal algorithm.  Due to the lack of physical landmarks in this dataset, a set of `virtual' landmarks (in the inertial frame) are generated as \cite{wang2020nonlinear}. 
	The IMU measurements are not continuous although obtained at a high rate. The same numerical integration methods in \cite{wang2020nonlinear} for the estimated states are considered. 
	The monocular-bearing measurements are obtained from the right camera.
	
	For comparison purposes, the IEKF developed in \cite{barrau2017invariant} using 3D landmark position measurements  has been considered.  All the observers are concurrently executed using the same set of landmarks and visual measurements.   The initial conditions for the estimated states are given as $\hat{R}(0) = \exp(0.1\pi u^\times)$ with $u\in \mathbb{S}^2$, $\hat{p}(0)=\hat{v}(0) =0_{3\times 1}$ and $\hat{e}_i(0)=e_i, i\in\{1,2,3\}$ and $P(0) = I_{15}$.  The scalar gain parameters are chosen as $k_R=20$,  $\rho_1=0.5,\rho_2=0.3$ and $\rho_3=0.2$.  
	For the CDRE \eqref{eqn:CDRE}, matrices $V(t)$ and $Q(t)$  are tuned using \eqref{eqn:V-Q} (with an additional small identity matrix $0.002I$) with $\Cov( n_x) = \blkdiag(0.0024I_3,0.028I_3)$, $\Cov( n_y) = 0.0005I_{3N}$ for bearing measurements and $\Cov( n_y) = 0.06I_{3N}$ for 3D position measurements. For the IEKF, the gain matrices are tuned as per Section V.B in \cite{barrau2017invariant} using the same above mentioned covariance of the measurements noise.
	The results of the first experiment are shown in Fig. \ref{fig:experimental_error}. The estimates, provided by the proposed observer  and the IEKF converge, after a few seconds, to the vicinity of the ground truth with a nice performance in terms of noise attenuation. The averaged position estimation errors after $10sec$ (\textit{i.e.,} at steady state) are as follows: $3.26$cm for the proposed observer with 3D position measurements, $3.29$cm for the proposed observer with stereo-bearing measurements, $10.99$cm for the proposed observer with monocular-bearing measurements and $2.89$cm for the IEKF with 3D position measurements. The proposed observer using monocular-bearing measurements comes with the largest position estimation error, which is mainly due to the fact that it takes less measurement information and requires stronger conditions (on the motion of the camera and the location of the landmarks) for uniform observability  than the other settings.

	In the second experiment, we consider the scenario where the measurements of the left camera are not available after $120sec$. In this situation, after $120sec$, the 3D landmark positions can not be constructed from a single camera, while the stereo-bearing measurements become the monocular-bearing measurements. The results of the second experiment are shown in Fig. \ref{fig:experimental_error2}. As one can see, the estimates, provided by the proposed observer using 3D landmark position measurements and IEKF, diverge after $120sec$; while the estimates, provided by the proposed observer using stereo-bearing measurements, stay in the vicinity of the ground truth.

	\begin{figure}[!ht]
		\centering
		\includegraphics[width=0.85\linewidth]{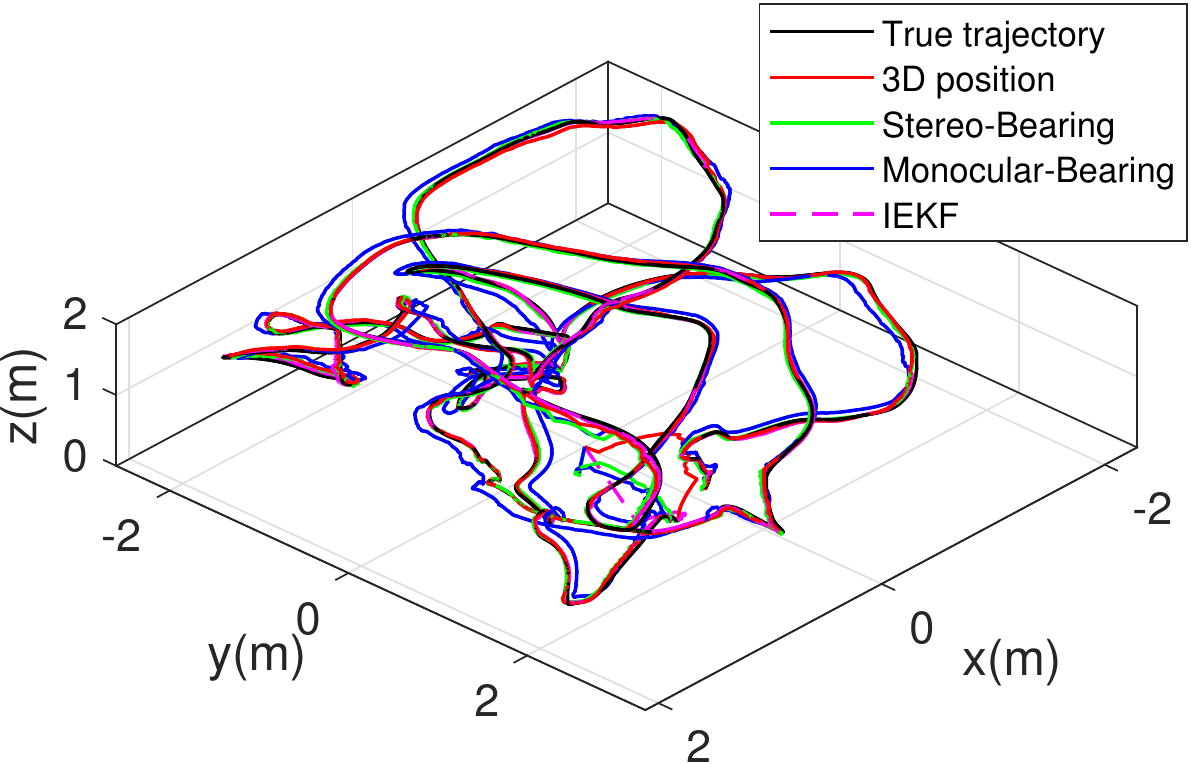}
		\includegraphics[width=0.93\linewidth]{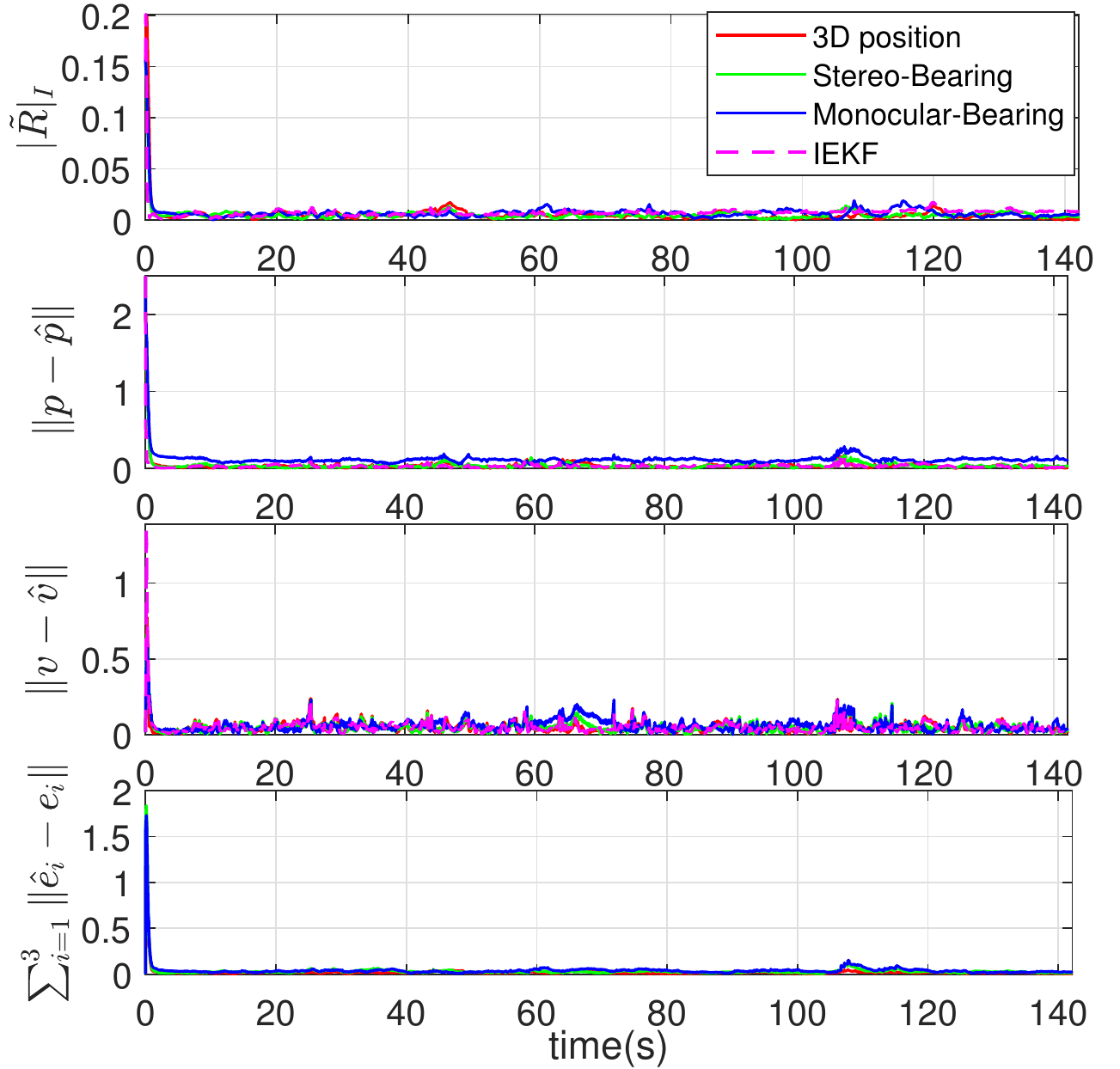}
		\caption{Experimental results  using Vicon Room 1 01 of the EuRoc dataset \cite{Burri25012016}.}
		\label{fig:experimental_error}
	\end{figure} 
	
	\begin{figure}[!ht]
		\centering 
		\includegraphics[width=0.93\linewidth]{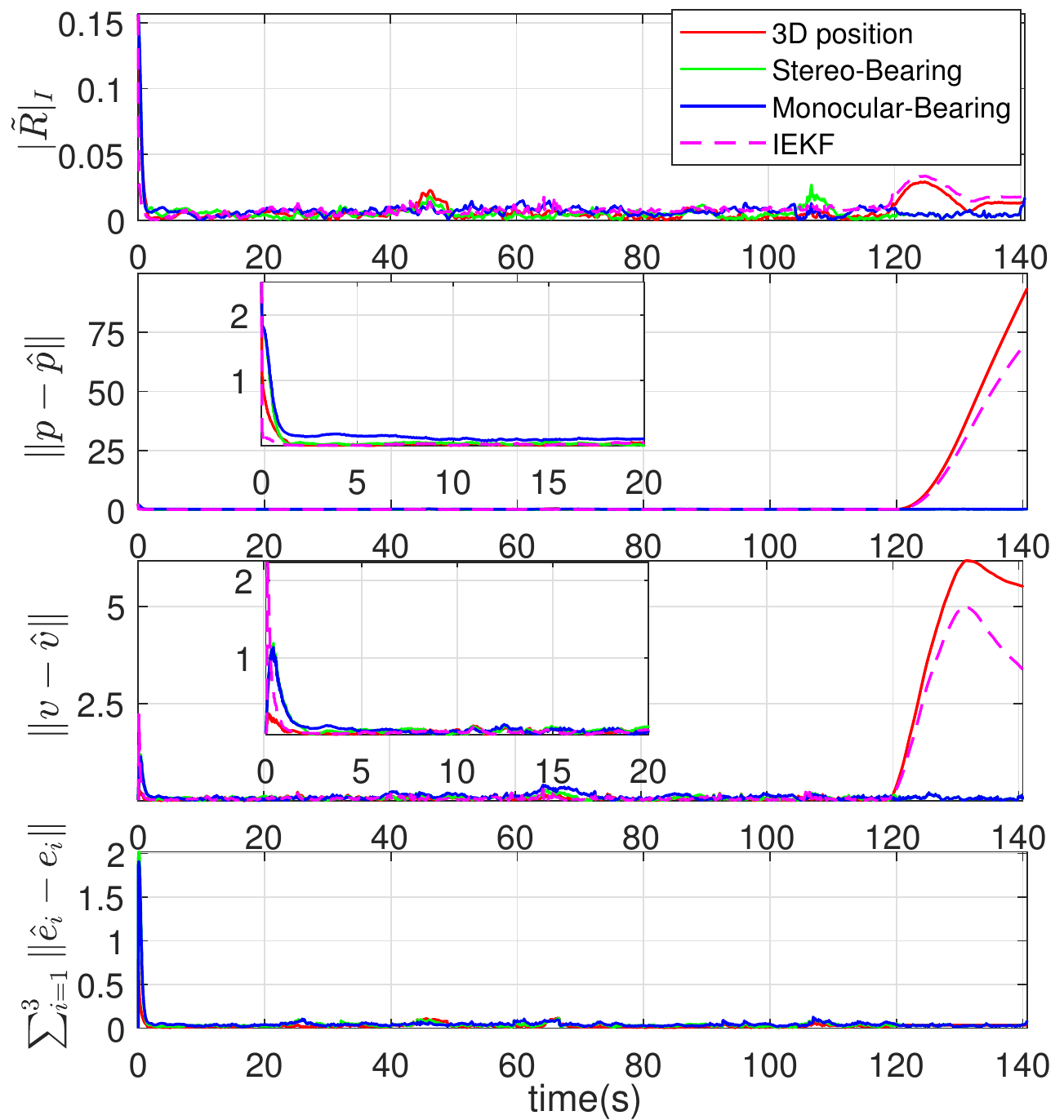}
		\caption{Experimental results   using    Vicon Room 1 01 of the EuRoc dataset \cite{Burri25012016}. The measurements of the left camera are not available after $120sec$.}
		\label{fig:experimental_error2}
	\end{figure}

	\section{Conclusion}
	We addressed the problem of simultaneous estimation of the attitude, position and linear velocity for vision-aided INSs. An AGAS nonlinear observer on $SO(3)\times \mathbb{R}^{15}$ has been proposed using body-frame acceleration and angular velocity measurements, as well as body-frame stereo (or monocular) bearing measurements of some known landmarks in the inertial frame. 
	A detailed uniform observability analysis has been carried out for the monocular and stereo bearing measurements cases. In the stereo bearing measurements case, uniform observability is guaranteed as long as there exist three non-aligned landmarks whose plane is not parallel to the gravity vector. In the monocular bearing case, on top of the condition of the stereo bearing case, it is required that none of the body-frame bearings (of the three non-aligned landmarks whose plane is not orthogonal to the gravity vector) maintains the same direction indefinitely. In the case of a monocular bearing, with a motionless camera, which is known as the static PnP problem, our observer provides a viable solution as long as we have five landmarks that are not in one of the four configurations shown in Fig. \ref{fig:non-observable}.\\
	For practical implementation purposes, we proposed a hybrid version of our nonlinear observer to handle the case where the IMU measurements are continuous and the bearing measurements are intermittently sampled. This observer has been validated using the EuRoc dataset experimental data of a real quadrotor flight. To illustrate the benefit of using bearing measurements over landmark position measurements, we implemented the bearing-based and landmark-position-based observers in a scenario where one of the cameras loses sight of the landmarks after some time, and the results are shown in Fig. \ref{fig:experimental_error2}. 
	As a future work, we intend to  enhance our proposed observer with the estimation of the accelerometer and angular velocity biases while preserving the AGAS property for the overall closed-loop system.

	
	\appendix

	\subsection{Proof of Theorem \ref{theo:theo1}}\label{sec:proof_theo1}
	
	Before proceeding with the proof of Theorem \ref{theo:theo1}, some useful properties on $SO(3)$ are given in the following lemma, whose proof can be found in \cite{berkane2017hybrid,berkane2017phdthesis}.
	\begin{lemma} \label{lemma:LM}
		Consider the trajectory $\dot{R}=R  \omega^\times$ with $R(0)\in SO(3)$ and $\omega\in \mathbb{R}^3$. Let  $\mathcal{L}_M(R)=\tr((I_3-R)M)$ be the the potential function on $SO(3)$ with $M=M\T$   a positive semi-definite matrix. Then, for all $x,y \in \mathbb{R}^3$ the following properties hold:
		\begin{subequations}
			\begin{align}
				4\lambda_{\min}^{\bar{M}} |R|_I^2 & \leq \mathcal{L}_M(R) \leq 4\lambda_{\max}^{\bar{M}}|R|_I^2  \label{eqn:property-tr-MR}\\
				\|\psi_a(MR)\|^2 &= \alpha(M,R)\tr((I_3-R)\underline{M}) \label{eqn:property-psi-mR} \\ 
				\dot{\psi}_a(MR) &= E(MR) \omega  \label{eqn:dotpisR}\\
				\|E(MR)\|_F &\leq \|\bar{M}\|_F \label{eqn:property-EMR2}
			\end{align}
		\end{subequations}
		where $\bar{M} := \frac{1}{2}(\tr(M)I_3-M), \underline{M}: = \tr(\bar{M}^2)I_3 - 2\bar{M}^2$, $E(MR) = \frac{1}{2}(\tr(M {R})I_3-R\T M)$, and the map $\alpha(M,R):=(1-|R|_I^2\cos \measuredangle (u,\bar{M}u))$ with $u\in \mathbb{S}^2$   denoting the axis of the rotation $R$ and $\measuredangle (\cdot, \cdot)$ denoting the angle between two vectors.
	\end{lemma}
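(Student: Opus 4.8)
The plan is to reduce every claim to the axis--angle parametrization of $SO(3)$. Write $R = I_3 + \sin\theta\,u^\times + (1-\cos\theta)(u^\times)^2$ with $u\in\mathbb{S}^2$ the rotation axis and $\theta\in[0,\pi]$; then $\tr(I_3-R) = 2(1-\cos\theta)$, so $|R|_I^2 = \sin^2(\theta/2) = (1-\cos\theta)/2$, and $(u^\times)^2 = uu\T - I_3$. First I would record three elementary facts for symmetric $M$: (i) $\tr(u^\times M) = 0$, since the product of an antisymmetric and a symmetric matrix is trace-free; (ii) $\text{vec}(Mu^\times + u^\times M) = (\tr(M)I_3 - M)u = 2\bar{M}u$; and (iii) $\text{vec}(Muu\T - uu\T M) = u\times(Mu)$, using $ab\T - ba\T = (b\times a)^\times$. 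I would also prove, by a short component check, the identity $\psi_a(Av^\times) = \tfrac12(\tr(A)I_3 - A\T)v$ valid for \emph{any} $A\in\mathbb{R}^{3\times 3}$ and $v\in\mathbb{R}^3$.

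For \eqref{eqn:property-tr-MR}: substituting the parametrization into $\mathcal{L}_M(R) = \tr((I_3-R)M)$, the $\sin\theta$ contribution vanishes by (i) and what remains is $(1-\cos\theta)(\tr(M) - u\T M u) = 2(1-\cos\theta)\,u\T\bar{M}u = 4|R|_I^2\,u\T\bar{M}u$, using $\tr(M) - u\T M u = 2u\T\bar{M}u$. Bounding the Rayleigh quotient $\lambda_{\min}^{\bar{M}} \leq u\T\bar{M}u \leq \lambda_{\max}^{\bar{M}}$ over $u\in\mathbb{S}^2$ gives both inequalities at once.

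For \eqref{eqn:property-psi-mR}: using $\psi_a(MR) = \text{vec}(\tfrac12(MR - R\T M))$ with the parametrization and facts (ii)--(iii) (and $Mu = \tr(M)u - 2\bar{M}u$, so $u\times Mu = -2u^\times\bar{M}u$), I would obtain the closed form $\psi_a(MR) = \sin\theta\,\bar{M}u - (1-\cos\theta)\,u^\times\bar{M}u$. Squaring, the cross term drops because $(\bar{M}u)\T(u\times\bar{M}u) = 0$ (a triple product with a repeated vector), and with $\|u^\times\bar{M}u\|^2 = \|\bar{M}u\|^2 - (u\T\bar{M}u)^2$ one is left with $\|\psi_a(MR)\|^2$ expressed through $\|\bar{M}u\|^2$ and $(u\T\bar{M}u)^2$ with purely $\theta$-dependent coefficients. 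Independently, applying the identity derived for \eqref{eqn:property-tr-MR} with $\underline{M} = \tr(\bar{M}^2)I_3 - 2\bar{M}^2$ in place of $M$, and simplifying via $\tr(\underline{M}) = \tr(\bar{M}^2)$ and $u\T\underline{M}u = \tr(\bar{M}^2) - 2\|\bar{M}u\|^2$, yields $\tr((I_3-R)\underline{M}) = 2(1-\cos\theta)\|\bar{M}u\|^2$. Dividing the two expressions and inserting $|R|_I^2 = (1-\cos\theta)/2$ and $\cos\measuredangle(u,\bar{M}u) = u\T\bar{M}u/\|\bar{M}u\|$ should identify the quotient as $\alpha(M,R)$. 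This reconciliation of the $\theta$- and eigenstructure-dependent factors is, I expect, the main obstacle: it is pure bookkeeping, but it is easy to slip on a power of a cosine or a stray factor of $2$.

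For \eqref{eqn:dotpisR}: since $\psi_a$ is linear and $M$ is constant, $\dot\psi_a(MR) = \psi_a(M\dot R) = \psi_a(MR\omega^\times)$; applying the identity $\psi_a(Av^\times) = \tfrac12(\tr(A)I_3 - A\T)v$ with $A = MR$ and $v = \omega$ gives exactly $\tfrac12(\tr(MR)I_3 - R\T M)\omega = E(MR)\omega$. Finally, for \eqref{eqn:property-EMR2}: the Frobenius norm is transpose-invariant, so $\|E(MR)\|_F = \tfrac12\|\tr(MR)I_3 - MR\|_F$, and a direct expansion shows $\|\tr(B)I_3 - B\|_F^2 = \tr(B)^2 + \|B\|_F^2$ for every $B\in\mathbb{R}^{3\times 3}$. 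Hence $\|E(MR)\|_F^2 = \tfrac14(\tr(MR)^2 + \|MR\|_F^2)$ and $\|\bar{M}\|_F^2 = \tfrac14(\tr(M)^2 + \|M\|_F^2)$. Since $\|MR\|_F^2 = \tr(MR R\T M) = \tr(M^2) = \|M\|_F^2$, it remains to show $|\tr(MR)| \leq \tr(M)$, which follows by writing $M = \sum_i \lambda_i w_iw_i\T$ ($\lambda_i\geq 0$, $\|w_i\| = 1$) and using $|\tr(MR)| \leq \sum_i \lambda_i|w_i\T R w_i| \leq \sum_i \lambda_i = \tr(M)$ by Cauchy--Schwarz and $R\in SO(3)$. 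Combining these gives $\|E(MR)\|_F \leq \|\bar{M}\|_F$.
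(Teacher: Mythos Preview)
The paper does not prove Lemma~\ref{lemma:LM} itself; it simply cites \cite{berkane2017hybrid,berkane2017phdthesis}. Your axis--angle approach is precisely the standard one used in those references, and your computations for \eqref{eqn:property-tr-MR}, \eqref{eqn:dotpisR} and \eqref{eqn:property-EMR2} are complete and correct as written (in particular, the identity $\psi_a(Av^\times)=\tfrac12(\tr(A)I_3-A\T)v$ follows from $2w\T\psi_a(Av^\times)=-\tr(Av^\times w^\times)=-\tr(A(wv\T-(v\T w)I_3))$, as you indicate).

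For \eqref{eqn:property-psi-mR} your route is also correct, and carrying it out explicitly gives
\[
\|\psi_a(MR)\|^2
=2(1-\cos\theta)\|\bar M u\|^2-(1-\cos\theta)^2(u\T\bar M u)^2,
\qquad
\tr((I_3-R)\underline M)=2(1-\cos\theta)\|\bar M u\|^2,
\]
so that the quotient is $1-|R|_I^2\,(u\T\bar M u)^2/\|\bar M u\|^2 = 1-|R|_I^2\cos^{2}\measuredangle(u,\bar M u)$. In other words, your bookkeeping uncovers a typo in the lemma statement: $\alpha(M,R)$ should carry a \emph{squared} cosine, as it does in \cite{berkane2017phdthesis}. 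This does not affect the subsequent use of the lemma in the proof of Theorem~\ref{theo:theo1}, which only requires $\alpha(M,R)\le 1$ and $\alpha(M,R)\ge 1-|R|_I^2$; both bounds hold for either version because $\bar M\succeq 0$ forces $\cos\measuredangle(u,\bar M u)\in[0,1]$. Your own remark that ``it is easy to slip on a power of a cosine'' was prescient.
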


	Consider the following real-valued function on $\mathbb{R}^{15}$:
	\begin{equation}
		\mathcal{L}_P(\tilde{x}) = \tilde{x}\T P^{-1} \tilde{x} \label{eqn:L_p}
	\end{equation}
	where $P(t)$ is the solution to the CRE \eqref{eqn:CRE}. Note that $A(t)$ in \eqref{eqn:closed-loop-A}  is continuous and  bounded since $\omega(t)$ is continuous and bounded. Since the pair $(A(t),C(t))$ is uniformly observable, it follows from \cite{bucy1967global,bucy1972riccati} that $p_m I_{15} \leq P(t) \leq p_M I_{15}, \forall t\geq 0$ with some  constants $0< p_m\leq p_M < \infty$. Hence
	\begin{equation}
		\frac{1}{p_M} \|\tilde{x}\|^2 \leq \mathcal{L}_P(\tilde{x}) \leq \frac{1}{p_m} \|\tilde{x}\|^2. \label{eqn:L_p-bound}
	\end{equation}
	From \eqref{eqn:CRE} and \eqref{eqn:dynamic_x}, the time-derivative of $\mathcal{L}_P$ is given by
	\begin{align}
		\dot{\mathcal{L}}_P  &= \tilde{x}\T (P^{-1}A+A\T P^{-1} - 2 C\T Q C + \dot{P}^{-1}) \tilde{x} \nonumber \\
		&= -   \tilde{x}\T P^{-1}V P^{-1} \tilde{x} -  \tilde{x}\T C\T QC \tilde{x}\nonumber \\
		&\leq  -\frac{v_m}{p_M^2}\|\tilde{x}\|^2  \leq - \lambda\mathcal{L}_P
		\label{eqn:dot_L_x} 
	\end{align}
	with $\lambda:= {v_m p_m}/{p_M^2}$, $v_m := \inf_{t\geq 0}\lambda_{\min}^{V(t)}$, where we made use of the facts $C\T QC \geq 0$  and $\dot{P}^{-1}=- P^{-1} \dot{P} P^{-1}=-P^{-1}A-A\T P^{-1}  +  C\T Q C -  P^{-1}V P^{-1}$. Hence, one has $ \|\tilde{x}(t)\| \leq \sqrt{ {p_M}/{p_m}} \exp(-\frac{\lambda}{2} t) \|\tilde{x}(0)\|$, which implies that  $\tilde{x}$ converges to zero exponentially, and $\tilde{x}, \dot{\tilde{x}}$ are bounded.
	Note that the convergence of $\tilde{x}$ is independent from the dynamics of the rotation. From \eqref{eqn:closed-loop},  the equilibrium points of the system are given as $(R^*,0_{15\times 1})$ with $\|\psi_a(MR^*)\|=0$. 
	Using the facts  $\psi_a(MR)=\text{vec} \circ \mathbb{P}_a(MR)$ and $\mathbb{P}_a(MR)=(MR- R\T M)/2$, it follows that $\|\psi_a(M\tilde{R})\|=0$ implies, as shown in \cite{mahony2008nonlinear}, that $\tilde{R}\in \{R\in SO(3):~R=\mathcal{R}_\alpha(\pi,v), v\in \mathcal{E}(M) \}$.

	On the other hand, consider the following real-valued function on $SO(3)$:
	\begin{equation}
		\mathcal{L}_M(\tilde{R}) = \tr((I-\tilde{R})M)  \label{eqn:L_R}
	\end{equation}
	whose time-derivative is given by
	\begin{align}
		\dot{\mathcal{L}}_{{M}} &= \tr(-M \tilde{R} (-k_R \psi_a(M\tilde{R})- \Gamma(t)\tilde{x} )^\times) \nonumber  \\
		& \leq -2k_R\|\psi_R\|^2 +    2c_\Gamma\|\tilde{x}\| \|\psi_R\|   \label{eqn:dot_LM}
	\end{align}
	where $\psi_R: = \psi_a(M\tilde{R})$, and we made use of the facts   $\tr(-A u^\times) = \langle\langle A, u^\times \rangle\rangle = 2u\T \psi_a(A)$ for any $A\in \mathbb{R}^{3\times 3}, u\in \mathbb{R}^3$, and $\|\Gamma(t)\|  \leq   c_\Gamma$ for all $t\geq 0$.  
	
	Consider the following Lyapunov function candidate:
	\begin{align}
		\mathcal{L}(\tilde{R},\tilde{x}) = \mathcal{L}_M(\tilde{R}) + \kappa \mathcal{L}_P(\tilde{x}) \label{eqn:L}
	\end{align}
	with some constant scalar $\kappa>0$. Let  $ \zeta :=[\|\psi_R\|, \|\tilde{x}\|]\T$. From \eqref{eqn:dot_L_x} and \eqref{eqn:dot_LM}, the time-derivative of $\mathcal{L}$ is given by
	\begin{align}
		\dot{\mathcal{L}}  &\leq -2k_R\|\psi_R\|^2 +    2 c_\Gamma\|\tilde{x}\| \|\psi_R\| -\kappa \frac{v_m}{p_M^2} \|\tilde{x}\|^2   \nonumber \\
		& = -\zeta\T H \zeta, \quad H:=\begin{bmatrix}  
			2k_R & - { c_\Gamma} \\
			-  { c_\Gamma}  &  \frac{\kappa v_m}{p_M^2}
		\end{bmatrix}  \label{eqn:dot_L}.
	\end{align}
	Choosing $\kappa> {c_\Gamma^2 p_M^2}/{(2k_R v_m)}$  such that  matrix $H$ is positive definite, it follows that
	$\dot{\mathcal{L}} \leq 0$ and then $\mathcal{L}$ is non-increasing. Then, making use of the facts $\mathcal{L}(\tilde{R}(t),\tilde{x}(t)) - \mathcal{L}(\tilde{R}(0),\tilde{x}(0)) = \int_{0}^{t} \dot{\mathcal{L}} (\tilde{R}(\tau),\tilde{x}(\tau)) d\tau \leq -\int_{0}^{t}  \zeta\T (\tau) H \zeta(\tau) d\tau $, one verifies that $\lim_{t \to \infty}\int_{0}^{t}  \zeta\T (\tau) H \zeta(\tau) d\tau$ exists and is finite. Since $\tilde{x}$ is bounded and matrices $A(t),C(t),Q(t)$ and $P(t)$ are bounded, it is clear that $\dot{\tilde{x}}$ is bounded. From \eqref{eqn:property-tr-MR} and \eqref{eqn:property-psi-mR} in Lemma \ref{lemma:LM}, one obtains that $\psi_R$ is bounded. Moreover, in view of \eqref{eqn:closed-loop} and \eqref{eqn:dotpisR}-\eqref{eqn:property-EMR2}, one can easily verify that $ \dot{\psi}_R$ is bounded. Thus, it follows that the time-derivative of $\zeta\T H \zeta$ is bounded, which implies  the uniform continuity of $\zeta\T H \zeta$. Therefore, by virtue of Barbalat's lemma, one has $\lim_{t \to \infty}  \zeta\T (t) H \zeta(t) = 0$, \ie,  $(\|\psi_R\|,\|\tilde{x}\|)\to (0,0)$ as $t \to \infty$. This implies that, for any initial condition $(\tilde{R}(0),\tilde{x}(0))\in SO(3) \times \mathbb{R}^{15}$, the solution $(\tilde{R},\tilde{x})$ to \eqref{eqn:closed-loop} converges to the set ${(I_3,0_{15\times 1})}\cup \Psi_M$, which proves item (i). 
	
	Next, let us prove the local exponential stability of the equilibrium $(I_3,0_{15\times 1})$ in item (ii). Let $0<\varepsilon_R  < 4\lambda_{\min}^{\bar{M}}$ and define the set $U_{\varepsilon_R} = \{(R, {x})\in SO(3)\times  \mathbb{R}^{15}: \mathcal{L}(R, {x}) \leq \varepsilon_R\}$. From \eqref{eqn:L}-\eqref{eqn:dot_L} with $\kappa>  { c_\Gamma^2 p_M^2}/{(2k_Rv_m)}$, for any initial condition $(\tilde{R}(0),\tilde{x}(0))\in U_{\varepsilon_R}$, one has
	$
	\mathcal{L}_M(\tilde{R}(t)) \leq \mathcal{L}(\tilde{R}(t),\tilde{x}(t)) \leq \mathcal{L}(\tilde{R}(0),\tilde{x}(0)) \leq \varepsilon_R  
	$
	for all $t\geq 0$. It follows from \eqref{eqn:property-tr-MR}-\eqref{eqn:property-psi-mR} that
	\begin{align}
		|\tilde{R}|_I^2 &\leq  {\varepsilon_R/4\lambda_{\min}^{\bar{M}}}< 1 \\
		\varrho |\tilde{R}|_I^2 &\leq \|\psi_R\|^2 \leq 4\lambda_{\max}^{W} |\tilde{R}|_I^2 \label{eqn:psi-R}
	\end{align}
	with matrix $W: = \frac{1}{2}(\tr(\underline{M})I_3- \underline{M})=\bar{M}^2$ and
	$\varrho := \min_{(R,x)\in U_{\varepsilon_R}} 4\alpha(M,R)  \lambda_{\min}^{W} \geq 4(1-\varepsilon_R/4\lambda_{\min}^{\bar{M}} )  \lambda_{\min}^{W} 
	$. Let $\bar{\zeta}:=[|\tilde{R}|_I, \|\tilde{x}\|]\T$. In view of \eqref{eqn:property-tr-MR}, \eqref{eqn:L_p-bound} and \eqref{eqn:L_R}, one obtains  
	\begin{equation}
		\underline{\alpha} \|\bar{\zeta}\|^2 \leq
		\mathcal{L}
		\leq \bar{\alpha} \|\bar{\zeta}\|^2   \label{eqn:L_zeta_bar}
	\end{equation}
	where $\underline{\alpha} := \min\{4\lambda_{\min}^{\bar{M}},  \frac{\kappa}{p_M}\}$ and  $\bar{\alpha} := \max\{4\lambda_{\max}^{\bar{M}},  \frac{\kappa}{p_m}\}$. Substituting  \eqref{eqn:psi-R} into \eqref{eqn:dot_L}, one has
	\begin{align}
		&\dot{\mathcal{L}}
		\leq -2k_R\varrho |\tilde{R}|_I^2 +     4 \sqrt{\lambda_{\max}^{W}} c_\Gamma\|\tilde{x}\| |R|_I - \frac{\kappa v_m}{p_M^2}\|\tilde{x}\|^2  \nonumber \\
		&~~~\leq    -\bar{\zeta}\T {\bar{H}}\bar{\zeta},  ~  \bar{H}:=\begin{bmatrix}
			2 k_R \varrho  & -  2  \sqrt{\lambda_{\max}^{W}} c_\Gamma \\
			- 2    \sqrt{\lambda_{\max}^{W}} c_\Gamma  & \frac{\kappa v_m}{p_M^2}	
		\end{bmatrix}.  \label{eqn:dot_L2}
	\end{align}
	Choosing $\kappa >  { 2  \lambda_{\max}^{W} c_\Gamma^2 p_M^2}/{(k_R \varrho v_m)} $, one can show that both matrices  $H$ and $\bar{H}$ are positive definite since $\varrho \leq  4\lambda_{\min}^{W} \leq 4\lambda_{\max}^{W}$.  In view of \eqref{eqn:L_zeta_bar} and \eqref{eqn:dot_L2}, one concludes
	$
	\|\bar{\zeta}(t)\|  \leq  \sqrt{ {\bar{\alpha}}/{\underline{\alpha}}} \exp(-\frac{1}{2}\lambda_{\min}^{\bar{H}} t) \|\bar{\zeta}(0)\| 
	$
	for all $t\geq 0$, which implies that $(\tilde{R},\tilde{x})$ converges to $(I_3,0_{15\times 1})$ exponentially for any initial condition $(\tilde{R}(0),\tilde{x}(0))\in U_{\varepsilon_R}$. This completes the proof of item (ii).

	Now, we need to show that the undesired equilibria of \eqref{eqn:closed-loop} defined by the set $\Psi_M$ are unstable. From \eqref{eqn:dot_L_x}, one shows that $\tilde{x}$ converges to zero exponentially, and is independent from the dynamics of $\tilde{R}$. Then, we focus on the dynamics of \eqref{eqn:closed-loop} at $\|\tilde{x}\|=0$. For each $v\in \mathcal{E}(M)$, let us define $R^*_v=\mathcal{R}_\alpha(\pi,v)$  and the open set  $U_v^\delta:=\{(R,x)\in SO(3)\times \mathbb{R}^{15}: R = R^*_v \exp(\epsilon^\times),  \|\epsilon\|  \leq \delta, \|x\|=0\}$ with $\delta$ sufficiently small. For any $(\tilde{R},\tilde{x})\in U_v^\delta$, pick a sufficiently small $\epsilon$ such that $(R^*_v)\T \tilde{R}:=\exp(\epsilon^\times) \approx   I_3 + \epsilon^\times  $. Consequently, from  \eqref{eqn:dynamic_R}  one  obtains the linearized dynamics of $\epsilon$ around  $R^*_v$  as follows:
	\begin{align}
		\dot{\epsilon} &   =- k_R W_v \epsilon  \label{eqn:dot_epsion}
	\end{align}
	where $\psi_a(k_R MR^*_v(I_3+\epsilon^\times)) = k_R W_v \epsilon$ with $W_v = W_v\T := \frac{1}{2} (\tr(MR^*_v)I_3-(MR^*_v)\T)= \frac{1}{2}(2v\T M v-\tr(M))I_3-  \frac{1}{2}(2 \lambda_v^M vv\T   - M)$, and we made use of the facts $M v = \lambda_v^M v$,  $\psi_a(MR^*_v)=0, \psi_a(MR^*_v \epsilon^\times) =\text{vec} \circ \mathbb{P}_a(MR^*_v \epsilon^\times)$ and $\mathbb{P}_a(MR^*_v \epsilon^\times)= \frac{1}{2} (MR^*_v \epsilon^\times + \epsilon^\times (MR^*_v)\T) =  (W_v \epsilon)^\times $.  
	Since $W_v = W_v\T$ and $M$ is positive semi-definite with three distinct eigenvalues, one verifies that $-2v\T W_v v = -v\T M v+\tr(M)>0$, which implies that   $-W_v, \forall v\in \mathcal{E}(M)$ has at least one positive eigenvalue. Then, one can conclude that all the equilibrium points in $\Psi_M$ are unstable.  Consider the subsystem $\dot{\tilde{R}} = \tilde{R}(-k_R\psi_a(M\tilde{R}))^\times$ and its   linearized dynamics around the undesired   equilibrium points $R^*_v=\mathcal{R}_\alpha(\pi,v), v\in \mathcal{E}(M)$ in \eqref{eqn:dot_epsion}. 
	For each undesired equilibrium point $R^*_v$, there exist (local) stable and unstable manifolds, and the union of the stable manifolds and the undesired equilibria has dimension less than three \cite[The Stable Manifold Theorem]{perko2013differential}. Then, the set of  the union of the stable manifolds and the undesired equilibria has Lebesgue measure zero.  It follows that the solution $\tilde{R}(t)$ converges to $I_3$ starting from all initial conditions except from a set of Lebesgue measure zero. Hence, one   concludes that the   equilibrium $(I_3, 0_{15\times 1})$ of the overall system  \eqref{eqn:closed-loop} is almost globally asymptotically stable. This completes the proof.

	\subsection{Proof of Lemma \ref{lemma:lemmaUOC}} \label{sec:proof_lemmaUOC} 	
	The proof of this Lemma is motivated from \cite[Lemma 3.1]{scandaroli2013visuo} and \cite[Lemma 2.7]{hamel2017position}. 	
	In order to show that the pair $(A,C(t))$ is uniformly observable, we are going to verify the existence  of constants $\delta,\mu>0$ such that  
	$ z\T W_o(t,t+\delta)z =   \frac{1}{\delta}  \int_{t}^{t+\delta} \|C(\tau) \Phi(\tau,t) z\|^2 d\tau \geq \mu$ for all $ t\geq 0$ and $z\in \mathbb{S}^{n-1}$. 
	Let us proceed by contradiction and assume that the pair $(A, C(t))$ is not uniformly observable, \textit{i.e.,}  
	\begin{align}
		\forall \bar{\delta},\bar{\mu}>0, \exists t\geq 0,  \min_{z\in \mathbb{S}^{n-1}}\frac{1}{\bar{\delta}}  \int_{t}^{t+\bar{\delta}} \|C(\tau) \Phi(\tau,t) z\|^2 d\tau < \bar{\mu}.  \label{eqn:contradiction0}
	\end{align}
	Consider a sequence $\{\mu_q\}_{q\in \mathbb{N}}$ of positive numbers converging to zero, and an arbitrary positive scalar $\bar{\delta}$. Then, there must exist a sequence of time instants $\{t_q\}_{q\in \mathbb{N}}$ and a sequence of   vectors $\{z_q\}_{q\in \mathbb{N}}$ with $z_q\in \mathbb{S}^{n-1}$ such that  $\frac{1}{\bar{\delta}}  \int_{t_q}^{t_q+\bar{\delta}} \|C(\tau) \Phi(\tau,t_q) z_q\|^2 d\tau < \bar{\mu}_q$  for any $q\in \mathbb{N}$. Since the set $\mathbb{S}^{n-1}$ is compact, there exists a sub-sequence of $\{z_q\}_{q\in \mathbb{N}}$ which converges to a limit $\bar{z}\in \mathbb{S}^{n-1}$. Moreover, since   $C(t)$ is bounded and the interval
	of integration in \eqref{eqn:contradiction0} is fixed,  it follows from \eqref{eqn:contradiction0} that
	\begin{equation}
		\lim_{q\to  \infty}  \int_{0}^{\bar{\delta}} \|C(t_q + \tau)  \Phi(t_q + \tau,t_q) \bar{z}\|^2 d\tau =0  \label{eqn:contradiction}
	\end{equation}	
	by a change of integration variable. 
	Using the facts $A=S+N$ and  $SN=NS$,  the state transition matrix $\Phi(t_q + \tau,t_q)$ can be explicitly expressed as   $\Phi(t_q + \tau,t_q) =\exp(A\tau)=\exp(S\tau) \exp(N\tau) $. Then,  \eqref{eqn:contradiction} is equivalent to $	\lim_{q\to  \infty}   \int_{0}^{\bar{\delta}} \left\|C(t_q + \tau)  \exp(S\tau) \exp(N\tau)\bar{z}  \right\|^2  d\tau =  0$, which implies
	\begin{align}
		\lim_{q\to  \infty}   \int_{\bar{\delta}-\delta}^{\bar{\delta}}\left\|C(t_q + \tau)  \exp(S\tau) \exp(N\tau)\bar{z}  \right\|^2  d\tau =  0  \label{eqn:contradiction1} 
	\end{align}  
	with some $0<\delta<\bar{\delta}$.
	Consider now the following technical results   whose proofs are given after this proof. 	
	\begin{lemma}\label{lemma:CexpN} 
		From \eqref{eqn:contradiction1} with  $\bar{\delta}$ large enough, one has
		\begin{equation}
			\lim_{q\to  \infty} \int_{\bar{\delta}-\delta}^{\bar{\delta}}     \left\|C(t_q + \tau)  \exp(N\tau)\bar{z}\right\|  d\tau =0 .  \label{eqn:contradiction2}
		\end{equation}   
	\end{lemma}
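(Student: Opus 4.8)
The plan is to strip the factor $\exp(S\tau)$ from \eqref{eqn:contradiction1} one eigenvalue at a time, exploiting that on a far‑out window the subdominant modes of $\exp(S\tau)$ are negligible. First I would use the Jordan–Chevalley structure already recalled in the text. Since all eigenvalues of $A$ are real and $S=A-N$ is diagonalizable, let $\mu_1>\mu_2>\dots>\mu_r$ be the distinct eigenvalues of $S$ and $P_1,\dots,P_r$ the corresponding spectral projectors, so that $\sum_{\ell=1}^r P_\ell = I_n$, $SP_\ell=\mu_\ell P_\ell$, and each $P_\ell$ commutes with $N$ (because $SN=NS$). Setting $u_\ell(\tau):=P_\ell\exp(N\tau)\bar z=\exp(N\tau)P_\ell\bar z$, which is a polynomial vector of degree at most $s-1$, one obtains the two decompositions $\exp(S\tau)\exp(N\tau)\bar z=\sum_{\ell=1}^r e^{\mu_\ell\tau}u_\ell(\tau)$ and $\exp(N\tau)\bar z=\sum_{\ell=1}^r u_\ell(\tau)$. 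Since $[\bar\delta-\delta,\bar\delta]$ is a fixed compact interval, Cauchy–Schwarz lets me pass between the $L^2$‑statement \eqref{eqn:contradiction1} and the corresponding $L^1$‑statement, and multiplying an integrand by any $e^{\pm\mu_\ell\tau}$ (bounded above and below on that interval) preserves the property ``$\lim_{q\to\infty}(\,\cdot\,)=0$''.

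Then I would peel the modes from the top eigenvalue downward. At stage $m$, the induction hypothesis is $\lim_{q\to\infty}\int_{\bar\delta-\delta}^{\bar\delta}\big\|C(t_q+\tau)\sum_{\ell\ge m}e^{\mu_\ell\tau}u_\ell(\tau)\big\|\,d\tau=0$, which for $m=1$ is just \eqref{eqn:contradiction1} after Cauchy–Schwarz. Dividing the integrand by the bounded factor $e^{\mu_m\tau}$ yields $\lim_{q\to\infty}\int_{\bar\delta-\delta}^{\bar\delta}\big\|C(t_q+\tau)\big(u_m(\tau)+\sum_{\ell>m}e^{(\mu_\ell-\mu_m)\tau}u_\ell(\tau)\big)\big\|\,d\tau=0$. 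For $\ell>m$ one has $\mu_\ell-\mu_m<0$, so on $[\bar\delta-\delta,\bar\delta]$ the remainder obeys $\big\|\sum_{\ell>m}e^{(\mu_\ell-\mu_m)\tau}u_\ell(\tau)\big\|\le\sum_{\ell>m}e^{(\mu_\ell-\mu_m)(\bar\delta-\delta)}\max_{\tau\in[\bar\delta-\delta,\bar\delta]}\|u_\ell(\tau)\|$, and since $\|u_\ell(\tau)\|$ grows at most polynomially in $\tau$ while $e^{(\mu_\ell-\mu_m)(\bar\delta-\delta)}$ decays exponentially, this bound tends to $0$ as $\bar\delta\to\infty$ — this is precisely where ``$\bar\delta$ large enough'' enters. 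Absorbing that remainder using the uniform bound on $C$, one gets $\lim_{q\to\infty}\int_{\bar\delta-\delta}^{\bar\delta}\|C(t_q+\tau)u_m(\tau)\|\,d\tau=0$, hence also (multiplying by the bounded $e^{\mu_m\tau}$) $\lim_{q\to\infty}\int_{\bar\delta-\delta}^{\bar\delta}\big\|C(t_q+\tau)\sum_{\ell\ge m+1}e^{\mu_\ell\tau}u_\ell(\tau)\big\|\,d\tau=0$, which closes the induction. Summing the $r$ identities $\lim_{q\to\infty}\int_{\bar\delta-\delta}^{\bar\delta}\|C(t_q+\tau)u_\ell(\tau)\|\,d\tau=0$ and using the triangle inequality with $\exp(N\tau)\bar z=\sum_{\ell}u_\ell(\tau)$ gives \eqref{eqn:contradiction2}.

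The main obstacle is the subdominant tail $\sum_{\ell>m}e^{(\mu_\ell-\mu_m)\tau}u_\ell(\tau)$. Unlike the dominant term it does not vanish on a fixed window; it only becomes negligible once the window is pushed far out, so the argument must carefully order the inner limit $q\to\infty$ against the requirement that $\bar\delta$ be large, and it must verify that the polynomial growth of the $u_\ell$'s (governed by the nilpotency index $s$) is dominated by the spectral gaps $\mu_m-\mu_\ell$; in effect one shows that $\limsup_{q\to\infty}$ of the target integral is at most a quantity $\eta(\bar\delta)\to 0$, which is what the phrase ``for $\bar\delta$ large enough'' records. A minor point to check is that the spectral projectors $P_\ell$ commute with $N$ and that $\exp(S\tau)\exp(N\tau)=\exp(N\tau)\exp(S\tau)$; both follow from $SN=NS$ and were already used implicitly in deriving \eqref{eqn:contradiction1}. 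Everything else is routine: Cauchy–Schwarz on the finite interval and the uniform bound on $C(t)$.
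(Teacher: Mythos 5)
Your proposal is correct and follows essentially the same route as the paper: decompose $\exp(S\tau)\exp(N\tau)\bar z$ into eigenmodes of $S$, normalize by the dominant exponential on the far-out window $[\bar\delta-\delta,\bar\delta]$, and use that the subdominant tail decays exponentially in $\bar\delta$ against at most polynomial growth of the nilpotent factors, with the boundedness of $C$ absorbing the remainder. The only difference is organizational — you peel modes by forward induction from the largest eigenvalue, whereas the paper argues by contradiction after selecting the largest non-vanishing mode — but the underlying mechanism and the role of ``$\bar\delta$ large enough'' are identical.
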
	
	\begin{lemma}\label{lemma:CN} 
		From \eqref{eqn:contradiction2} with  $\bar{\delta}$ large enough, one has
		\begin{equation}
			\lim_{q\to  \infty} \int_{\bar{\delta}-\delta}^{\bar{\delta}}   \|C(t_q + \tau)   N^k \bar{z}\|^2 d\tau =0   \label{eqn:contradiction3}
		\end{equation}
		for all $k = 0,1,\dots,s-1$.
	\end{lemma}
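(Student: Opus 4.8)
The plan is to exploit the fact that, since $N$ is nilpotent of index $s$, $\exp(N\tau)=\sum_{j=0}^{s-1}\frac{\tau^j}{j!}N^j$ is a matrix polynomial, so $\exp(N\tau)\bar z=\sum_{j=0}^{s-1}\frac{\tau^j}{j!}N^j\bar z$ is an $\mathbb{R}^n$-valued polynomial in $\tau$ whose coefficient vectors are precisely the quantities $N^j\bar z$ we wish to control. Let $k_0\le s-1$ be the largest index with $N^{k_0}\bar z\neq 0$ (if $N^{k_0}\bar z=0$ then $N^{j}\bar z=0$ for every $j\ge k_0$, so those instances of \eqref{eqn:contradiction3} are trivial, and the argument below is run with $k_0$ in place of $s-1$). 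On the window $[\bar\delta-\delta,\bar\delta]$ one has $\exp(N\tau)\bar z=\frac{\tau^{k_0}}{k_0!}(N^{k_0}\bar z+r_{\bar\delta}(\tau))$ with $r_{\bar\delta}(\tau)=\sum_{j<k_0}\frac{k_0!}{j!}\,\tau^{j-k_0}N^j\bar z$, and since $\tau\ge\bar\delta-\delta$ this remainder is uniformly small, $\sup_{\tau\in[\bar\delta-\delta,\bar\delta]}\|r_{\bar\delta}(\tau)\|\le c/(\bar\delta-\delta)$ for a constant $c=c(N,\bar z)$. This is exactly where the hypothesis ``$\bar\delta$ large enough'' does its work: on a far-out window the leading monomial $\tau^{k_0}$ dominates all lower ones.

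First I would extract the top coefficient: dividing by $\tau^{k_0}/k_0!$ and using $\|C(t)\|\le c_M$ yields, for $\tau\in[\bar\delta-\delta,\bar\delta]$, the pointwise bound $\|C(t_q+\tau)N^{k_0}\bar z\|\le \frac{k_0!}{(\bar\delta-\delta)^{k_0}}\,\|C(t_q+\tau)\exp(N\tau)\bar z\|+\frac{c_Mc}{\bar\delta-\delta}$; integrating over the window and letting $q\to\infty$, the first term vanishes by \eqref{eqn:contradiction2}, leaving a residual of order $\bar\delta^{-1}$ that is made arbitrarily small by enlarging $\bar\delta$. Passing between the $L^1$ statement \eqref{eqn:contradiction2} and the $L^2$ form \eqref{eqn:contradiction3} is harmless, since on the fixed-length window $\|\exp(N\tau)\bar z\|\le c_N\bar\delta^{k_0}$, so $\int\|C\,\exp(N\tau)\bar z\|^2\le c_Mc_N\bar\delta^{k_0}\int\|C\,\exp(N\tau)\bar z\|\to 0$, and likewise for the extracted terms. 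Having controlled $C(t_q+\tau)N^{k_0}\bar z$, I would then peel it off: replace $\exp(N\tau)\bar z$ by $\exp(N\tau)\bar z-\frac{\tau^{k_0}}{k_0!}N^{k_0}\bar z=\sum_{j<k_0}\frac{\tau^j}{j!}N^j\bar z$, note by the triangle inequality (together with the just-proved bound) that its $C$-weighted $L^1$ norm over the window still tends to zero, and apply the same estimate to this truncated polynomial to get \eqref{eqn:contradiction3} for $k=k_0-1$, iterating down to $k=0$. Since $\mathcal{O}(t)$ is built from rows of $C(t),C(t)N,\dots,C(t)N^{s-1}$ and $z_q\to\bar z$, these limits give $\int_{\bar\delta-\delta}^{\bar\delta}\|\mathcal{O}(t_q+\tau)\bar z\|^2\,d\tau\to 0$ up to a residual that $\bar\delta$ is chosen (once and for all) large enough to push below $\mu$, which contradicts \eqref{eqn:gramiancondition1} evaluated at $t=t_q+\bar\delta-\delta$.

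The delicate point is that $C$ is merely bounded and its argument slides with the integration variable, so one cannot simply invert the (otherwise nonsingular) moment/Vandermonde structure of $\{1,\tau,\dots,\tau^{k_0}\}$ to read off the coefficient vectors $N^j\bar z$ directly: the coupling between the polynomial powers and the time-varying factor $C(t_q+\tau)$ must be decoupled by translating the window far out, which is why the estimate has to be organised from the highest power downwards, and why the factors $\bar\delta^{k_0}$ that appear when converting between $L^1$ and $L^2$ norms and when subtracting the leading monomial must be tracked carefully against the smallness supplied by \eqref{eqn:contradiction2}.
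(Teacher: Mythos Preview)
Your first extraction step---factoring $\exp(N\tau)\bar z=\tfrac{\tau^{k_0}}{k_0!}\bigl(N^{k_0}\bar z+r_{\bar\delta}(\tau)\bigr)$ on a far-out window and using boundedness of $C$---is exactly the mechanism the paper uses. The difficulty is in the iteration. After the first step you have only
\[
\limsup_{q\to\infty}\int_{\bar\delta-\delta}^{\bar\delta}\|C(t_q+\tau)N^{k_0}\bar z\|\,d\tau\ \le\ c_M\delta\sup_{\tau\in[\bar\delta-\delta,\bar\delta]}\|r_{\bar\delta}(\tau)\|\ =\ O(1/\bar\delta),
\]
not zero. When you then ``peel off'' $\tfrac{\tau^{k_0}}{k_0!}N^{k_0}\bar z$ from $\exp(N\tau)\bar z$, the triangle inequality gives
\[
\limsup_{q\to\infty}\int_{\bar\delta-\delta}^{\bar\delta}\Bigl\|C(t_q+\tau)\sum_{j<k_0}\tfrac{\tau^{j}}{j!}N^{j}\bar z\Bigr\|\,d\tau
\ \le\ 0+\tfrac{\bar\delta^{k_0}}{k_0!}\cdot O(1/\bar\delta)\ =\ O(\bar\delta^{\,k_0-1}),
\]
which for $k_0\ge 2$ is \emph{not} small; your claim that this ``still tends to zero'' is not justified. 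Repeating the extraction at level $k_0-1$ (dividing by $\tau^{k_0-1}/(k_0-1)!$) then leaves an $O(1)$ residual independent of $\bar\delta$, so no choice of $\bar\delta$ can push the accumulated error below the fixed $\mu$ of \eqref{eqn:gramiancondition1}. The amplification you flag in your last paragraph is real, and the direct top-down peeling does not survive it.

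The paper avoids this by arguing by contradiction and starting from the \emph{largest failing index} rather than the largest nonzero one. Suppose \eqref{eqn:contradiction3} fails for some $k$; let $\bar k$ be the largest such index, so that for every $i>\bar k$ the limit is \emph{exactly} zero. Subtracting those higher terms from $\exp(N\tau)\bar z$ then costs nothing:
\[
\lim_{q\to\infty}\int_{\bar\delta-\delta}^{\bar\delta}\Bigl\|C(t_q+\tau)\sum_{i\le\bar k}\tfrac{\tau^{i}}{i!}N^{i}\bar z\Bigr\|\,d\tau=0.
\]
A single application of your leading-term estimate at index $\bar k$ (choosing $\bar\delta$ large enough that the lower-order remainder $\eta(\tau)$ satisfies $\sup\|\eta\|\le\epsilon/(2\delta c_M)$, with $\epsilon>0$ coming from the failure at $\bar k$) then bounds this same integral from below by a positive constant---a contradiction. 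The point is that maximality of $\bar k$ supplies exact vanishing above $\bar k$, so there is no iteration and hence no error to compound.
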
	
	Using the fact that the matrix  $\mathcal{O}(t)$ is composed of row vectors   of   $C(t) $, $C(t) N, \dots, C(t) N^{s-1}$, one has $ \sum_{k=0}^{s-1} \|C(t_q + \tau)  N^k \bar{z}\|^2    \geq   \|O(t_q + \tau) \bar{z}\|^2    	   
	$ for every $q\in \mathbb{N}_{>0}$.
	It follows from \eqref{eqn:contradiction2}  and \eqref{eqn:contradiction3}   that
	\begin{align}
		& \lim_{q\to  \infty} \int_{t_q +\bar{\delta}-\delta }^{t_q + \bar{\delta}} \|\mathcal{O}(\tau ) \bar{z}\|^2 d\tau   =  \lim_{q\to  \infty} \int_{\bar{\delta}-\delta}^{\bar{\delta}}   \|\mathcal{O}(t_q + \tau)  \bar{z}\|^2 d\tau \nonumber \\  
		&\qquad   \leq  \lim_{q\to  \infty} \int_{\bar{\delta}-\delta}^{\bar{\delta}}  \sum\nolimits_{k=0}^{s-1} \|C(t_q  + \tau)  N^k \bar{z}\|^2 d\tau  =0.  \label{eqn:Qz}
	\end{align}	
	On the other hand, in view of \eqref{eqn:gramiancondition1},  one can show that
	$
	\int_{t_q +\bar{\delta}-\delta }^{t_q + \bar{\delta}} \|\mathcal{O}(\tau ) \bar{z}\|^2 d\tau    = \bar{z}\T  ( \int_{t_q +\bar{\delta}-\delta }^{t_q + \bar{\delta}} \mathcal{O}\T(\tau)  \mathcal{O}(\tau)   d\tau  ) \bar{z} > \mu$ for each $ \bar{z} \in \mathbb{S}^{n-1} 
	$,
	which contradicts \eqref{eqn:Qz}. It implies that \eqref{eqn:contradiction3} is not true for all $k=0,1,\dots,s-1$, and in turns   \eqref{eqn:contradiction} and \eqref{eqn:contradiction1} are not true when $\bar{\delta}$ is large enough. Therefore, one can always find $\bar{\delta}$   large enough, such that \eqref{eqn:contradiction0} does not hold. Consequently, one   concludes that the pair $(A,C(t))$ is uniformly observable. It remains to prove Lemma \ref{lemma:CexpN} and   \ref{lemma:CN}.	 
	\subsubsection{Proof of Lemma \ref{lemma:CexpN}}
	We are going to show that \eqref{eqn:contradiction1} implies \eqref{eqn:contradiction2}
	provided that $\bar{\delta}$ is large enough. From \cite[Theorem 1]{perko2013differential} there exist an invertible matrix $P$ and a diagonal matrix $D$ such that   $D= \diag(\lambda_1,\cdots,\lambda_n)=P^{-1}SP$   with  $\lambda_1,\cdots,\lambda_n$ denoting the eigenvalues of $A$ repeated according to their multiplicity. Then, one obtains $\exp(S\tau) = P\exp(D\tau)P^{-1}$. 
	Let $\bar{z}'(\tau) =  P^{-1} \exp(N\tau) \bar{z}$   and rewrite the eigenvalues of $D$ as $\lambda'_1< \cdots <\lambda'_d$ with  $d\leq n$ denoting the number of distinct eigenvalues. It follows that $\exp(D\tau)  P^{-1} \exp(N\tau) \bar{z} = \exp(D\tau)  \bar{z}'(\tau) = \sum_{i=1}^d \exp(\lambda_i' \tau) \bar{z}_{i}'(\tau)$ with $\bar{z}_{i}'(\tau) \in \mathbb{R}^{15}$ for all $ i=1, \dots,d$ and  $\sum_{i=1}^d \bar{z}_{i}'(\tau) =\bar{z}'(\tau) $.  We assume that there exist a constant $\epsilon'>0$ and a sub-sequence of $\{i\}_{1\leq i \leq d} \subset \mathbb{N}_{>0}$  such that
	\begin{equation}
		\lim_{q\to  \infty}   \int_{\bar{\delta}-\delta}^{\bar{\delta}}  \|C(t_q + \tau)  P \bar{z}_i'(\tau)\|  d\tau > \epsilon'. \label{eqn:contradiction1-j2}
	\end{equation}	
	Then, one can always pick the largest $j$ from the sub-sequence. For the sake of simplicity, let us define $\nu_j(\tau)=\sum_{i=1}^j \exp(\lambda_i' \tau) \bar{z}_{i}'(\tau) $.
	Using the facts  $\| C(t_q + \tau)   P \nu_j(\tau)\|  \leq \| C(t_q + \tau)  P \nu_{d}(\tau) \| +  \sum_{i=j+1}^d\exp(\lambda_i \tau)  \|C(t_q + \tau)  P\bar{z}_{i}'(\tau) \|  $, $P\nu_{d}(\tau) = P\exp(D\tau)  P^{-1} \exp(N\tau) \bar{z} =\exp(S\tau)  \exp(N\tau) \bar{z} $ and	$  \lim_{q\to  \infty}  \int_{\bar{\delta}-\delta}^{\bar{\delta}}  \|C(t_q + \tau)   P\bar{z}_{i}'(\tau)\| = 0$ for all $j+1 \leq i\leq d$, from \eqref{eqn:contradiction1}    one can   show that
	\begin{align}
		&\lim_{q\to  \infty}   \int_{\bar{\delta}-\delta}^{\bar{\delta}} \left\|C(t_q + \tau)  P \nu_{j}(\tau) \right\|  d\tau  \nonumber \\			
		&   \leq \sum_{i=j+1}^d\exp(|\lambda_i| \bar{\delta}) \lim_{q\to  \infty}   \int_{\bar{\delta}-\delta}^{\bar{\delta}}  \|C(t_q + \tau)  P\bar{z}_{i}'(\tau) \| d\tau \nonumber \\
		& ~ +  \lim_{q\to  \infty}   \int_{\bar{\delta}-\delta}^{\bar{\delta}}  \| C(t_q + \tau)    \exp(S\tau)  \exp(N\tau) \bar{z}  \| d\tau  =    0. \label{eqn:contradiction1-j} 
	\end{align}		 
	Define  $\eta'(\tau) := \sum_{i=1}^{j-1}   \exp((\lambda_i'-\lambda_j') \tau) \bar{z}_{i}'(\tau)$  such that one has $ \exp(-\lambda_j' \tau)\nu_{j}(\tau) = \bar{z}_{j}'(\tau) + \eta'(\tau)$. Then, using the facts $ \sum_{i=1}^d \bar{z}_{i}'(\tau)=\bar{z}'(t)=P^{-1} \exp(N\tau) \bar{z} = P^{-1} \sum_{k=0}^{s-1}N^k \bar{z}$ and $\lim_{t\to \infty}\exp(-at)t^b =0$ for any $a,b>0$,  one obtains $\eta'(\tau) \to 0$ as $\tau \to \infty$.  Since  $C(t)$  is continuous and   bounded by assumption, there exists a positive constant $\gamma'$ given as $\gamma' = \sup_{t\geq 0} \|C(t)P\|$. Thus, from \eqref{eqn:contradiction1-j2}   one can show that 
	\begin{align*}
		& \exp(|\lambda_j'| \bar{\delta}) \int_{\bar{\delta}-\delta}^{\bar{\delta}} \left\|C(t_q + \tau)   P \nu_j(\tau) \right\|  d\tau   \nonumber \\ 
		& ~~\geq \int_{\bar{\delta}-\delta}^{\bar{\delta}} \left\|C(t_q + \tau)  P \exp(-\lambda_j' \tau) \nu_j(\tau) \right\|  d\tau   \nonumber \\ 
		& ~~\geq   \int_{\bar{\delta}-\delta}^{\bar{\delta}} \left\|C(t_q + \tau)   P ( \bar{z}_{j}'(\tau) + \eta'(\tau))\right\|  d\tau \nonumber \\
		&~~ \geq   \int_{\bar{\delta}-\delta}^{\bar{\delta}} \left\|C(t_q + \tau)  P   \bar{z}_{j}'(\tau)  \right\|  d\tau  - \gamma'  \int_{\bar{\delta}-\delta}^{\bar{\delta}} \left\|  \eta'(\tau) \right\|  d\tau
	\end{align*}
	Choosing $\bar{\delta}$ large enough such that $\sup_{\tau\in [\bar{\delta}-\delta,\bar{\delta}]} \|\eta'(\tau)\| \leq  {\frac{\epsilon'}{2\delta \gamma' }}$, it follows that
	$
	\lim_{q\to  \infty} \int_{\bar{\delta}-\delta}^{\bar{\delta}} \left\|C(t_q + \tau)   P \nu_j(\tau) \right\|  d\tau   \geq  \frac{\epsilon'}{2} \exp(-|\lambda_j'| \bar{\delta})   > 0 , 
	$
	which contradicts \eqref{eqn:contradiction1-j}.  It means that the sub-sequence satisfying \eqref{eqn:contradiction1-j2} does not exist, \ie, the assumption according to \eqref{eqn:contradiction1-j2} does not hold. Therefore, one obtains $\lim_{q\to  \infty}   \int_{\bar{\delta}-\delta}^{\bar{\delta}}  \|C(t_q + \tau)  P \bar{z}_i'(\tau)\|  d\tau =0$ for all $ i=1,2,\dots,d$. Using the fact $\exp(N\tau) \bar{z}=P\sum_{i=1}^d \bar{z}_{i}'(\tau)$, one can show that $\lim_{q\to  \infty}   \int_{\bar{\delta}-\delta}^{\bar{\delta}} \|C(t_q + \tau)  \exp(N\tau)\bar{z} \| \leq  \sum_{i=1}^d  \lim_{q\to  \infty}   \int_{\bar{\delta}-\delta}^{\bar{\delta}} \|C(t_q + \tau)  P\bar{z}_{i}'(\tau) \| =0 $, which gives \eqref{eqn:contradiction2}.

	\subsubsection{Proof of Lemma \ref{lemma:CN}}
	We are going to show \eqref{eqn:contradiction3} for all $k=0,1,\dots,s-1$ from \eqref{eqn:contradiction2} with some $\bar{\delta}$ large enough.
	Let us proceed by contradiction and assume that \eqref{eqn:contradiction3} does not hold for any $0\leq k \leq s-1$, \ie, there exist a constant $\epsilon>0$ and a sub-sequence of $\{k\}_{0\leq k \leq s-1} \subset \mathbb{N}$  such that
	\begin{equation}
		\lim_{q\to  \infty} \int_{\bar{\delta}-\delta}^{\bar{\delta}}   \|C(t_q + \tau)   N^k \bar{z}\| d\tau > \epsilon. \label{eqn:contradiction_k}
	\end{equation}	
	One can always pick the largest $\bar{k}$ such that \eqref{eqn:contradiction_k} holds.
	Since $N$ is a nilpotent matrix of order $s\leq n$, one has $\exp(N\tau) = \sum_{k=0}^{s-1}\frac{\tau^{ {k}}}{ {k}!}N^k$. For the sake of simplicity, define $\varSigma_k(\tau) = \sum_{i=0}^{k}\frac{\tau^{i}}{i!}N^i$ with   $k\in \mathbb{N}$. 
	Using the fact $\|C(t_q + \tau)  \varSigma_{\bar{k}}(\tau)\bar{z}\| \leq  \left\|C(t_q + \tau)  \exp(N\tau) \bar{z}\right\|  + \sum_{i=\bar{k}+1}^{s-1} \frac{\tau^{i}}{i!}   \|C(t_q + \tau)   N^{i} \bar{z} \| $ and $\lim_{q\to  \infty}   \int_{\bar{\delta}-\delta}^{\bar{\delta}}\|C(t_q + \tau)   N^{i} \bar{z}\| d\tau =0$ for all $\bar{k}+1\leq i \leq s-1$, from \eqref{eqn:contradiction2}  one can show that
	\begin{align}
		&\lim_{q\to  \infty} \int_{\bar{\delta}-\delta}^{\bar{\delta}}   \|C(t_q + \tau)  \varSigma_{\bar{k}}(\tau)\bar{z}\| d\tau \nonumber \\
		&~~~ \leq   \lim_{q\to  \infty}  \int_{\bar{\delta}-\delta}^{\bar{\delta}}  \left\|C(t_q + \tau)\exp(N\tau)   \bar{z}\right\| d\tau \nonumber \\
		& \quad     + \lim_{q\to  \infty} \sum\nolimits_{i=\bar{k}+1}^{s-1} \frac{\bar{\delta}^{i}}{ i! }     \int_{\bar{\delta}-\delta}^{\bar{\delta}}\|C(t_q + \tau)   N^{i} \bar{z}\| d\tau =0. \label{eqn:contradiction2-k}
	\end{align} 	 
	Let  $\eta(\tau)= \frac{\bar{k}!}{\tau^{\bar{k}}} \varSigma_{\bar{k}-1}(\tau)\bar{z}$ such that $ \frac{\bar{k}!}{\tau^{\bar{k}}}\varSigma_{\bar{k}}(\tau)\bar{z} =N^{\bar{k}}\bar{z}+\eta(\tau)$. It is easy to show that $\eta(\tau)$ is bounded and $\lim_{t\to\infty}\eta(\tau) = 0$. Since  $C(t)$  is bounded by assumption, there exist a positive constant $\gamma$ such that $\gamma = \sup_{t\geq 0} \|C(t) \|$.   Then,  one obtains
	\begin{align*}
		&\frac{\bar{k}! }{(\bar{\delta}-\delta)^{k}}   \int_{\bar{\delta}-\delta}^{\bar{\delta}}  \|C(t_q + \tau)  \varSigma_{\bar{k}}(\tau)   \bar{z}  \|  d\tau  \nonumber \\
		& ~~~ \geq  \int_{\bar{\delta}-\delta}^{\bar{\delta}}  \left\|  C(t_q + \tau)   \frac{\bar{k}!}{\tau^{\bar{k}}}\varSigma_{\bar{k}}(\tau)\bar{z} \right\| d\tau \nonumber \\
		&~~~ =  \int_{\bar{\delta}-\delta}^{\bar{\delta}} \|  C(t_q + \tau)  (N^{\bar{k}}\bar{z}+ \eta(\tau))\| d\tau \nonumber \\ 
		&~~~ \geq  \int_{\bar{\delta}-\delta}^{\bar{\delta}}   \|C(t_q + \tau)  N^{\bar{k}}\bar{z}\| d\tau   - \gamma  \int_{\bar{\delta}-\delta}^{\bar{\delta}} \|  \eta(\tau)\| d\tau.   
	\end{align*}  
	Choosing $\bar{\delta}$ large enough such that $\sup_{\tau\in [\bar{\delta}-\delta,\bar{\delta}]}\|\eta(\tau)\|\leq   {\frac{\epsilon}{2\delta \gamma }}$,   from \eqref{eqn:contradiction_k} one can show that   
	$
	\lim_{q\to  \infty} \int_{\bar{\delta}-\delta}^{\bar{\delta}} \|C(t_q + \tau) \varSigma_{\bar{k}}(\tau) \bar{z}\|  d\tau   
	\geq  {\epsilon(\bar{\delta}-\delta)^{\bar{k}}}/{(2\bar{k}!)} >0 ,  
	$
	which contradicts \eqref{eqn:contradiction2-k}, which means that the sub-sequence satisfying \eqref{eqn:contradiction_k} does not exist, \ie,  assumption \eqref{eqn:contradiction_k} does not hold. Thus,  one can show that 
	$
	\lim_{q\to  \infty} \int_{0}^{{\delta}} \|C(t_q + \bar{\delta}-\delta+ \tau)  N^{k} \bar{z}\| d\tau =0  \label{eqn:contradiction2_k}  
	$
	for all $k=0,1,\dots,s-1$. 
	Therefore,   one   can conclude that   \eqref {eqn:contradiction3}  holds for all $k=0,1,\dots,s-1$ with some $\bar{\delta}$ large enough. This completes the proof.

	\subsection{Proof of Lemma \ref{lemma:AC2}}\label{sec:lemmaAC2}
	From the definition of $A(t)$ in \eqref{eqn:closed-loop-A},  one can rewrite $A(t)=\bar{A} + S(t)$ with a block diagonal skew symmetric matrix  $S(t)= \blkdiag(-\omega^\times,-\omega^\times, \dots,-\omega^\times)\in \mathbb{R}^{15 \times 15}$ and a constant matrix $\bar{A}$ such that   $\bar{A}S(t)= S(t)\bar{A}$.
	Let us introduce the following block diagonal matrix: 
	\begin{align}
		T(t) = \blkdiag( {R}\T,  {R}\T,\dots, {R}\T)\in \mathbb{R}^{15 \times 15} \label{eqn:def_T}
	\end{align} 
	whose dynamics are given as $\dot{T}(t) = S(t)T(t)$. One can verify that $T(t)  T\T(t) = I_{15}$ and $T(t)\bar{A} = \bar{A}T(t)$. Let $\bar{\Phi}(t,\tau) = \exp(\bar{A}(t-\tau))$ be the state transition matrix associated to $\bar{A}$. Using similar steps as in the proof of \cite[Lemma 3]{wang2020hybrid}, the state transition matrix associated to $A(t)$ can be  written as 
	\begin{align}
		\Phi(t,\tau) = T(t)\bar{\Phi}(t,\tau) T^{-1}(\tau) \label{eqn:def_Phi}
	\end{align} 
	with the properties:  $\frac{d}{dt} \Phi(t,\tau)= A(t) \Phi(t,\tau)$,  $\Phi(t,t) = I$,  $\Phi^{-1}(t,\tau) = \Phi(\tau,t)$ for all $t,\tau\geq 0$, and   $ \Phi(t_3,t_2)\Phi(t_2,t_1) = \Phi(t_3,t_1)$ for every $t_1,t_2,t_3\geq 0$. Moreover, one can rewrite $C(t)$ as $C(t)=\Theta(t)\bar{C}$ with     $\Theta(t) = \blkdiag( {\Pi}_1(t),\dots,  {\Pi}_N(t))\in \mathbb{R}^{3N\times 3N}$ and $\bar{C}$ defined in \eqref{eqn:Cbar}.
	Define a new block matrix 
	\begin{equation}
		\bar{T}(t) = \blkdiag( {R}\T,  {R}\T,\dots, {R}\T)\in \mathbb{R}^{3N\times 3N}   \label{eqn:def_barT}.
	\end{equation}
	One can show that  $\bar{C} T(t) =  \bar{T}(t) \bar{C}$ and $\bar{T}\T (t) \bar{T}(t) = I_{3N}$. From \eqref{eqn:gramiancondition} and \eqref{eqn:def_T}-\eqref{eqn:def_barT}, one can show that
	\begin{align}
		W_o(t,t+\delta)  
		&  =\frac{1}{\delta} \int_{t}^{t+\delta} \Phi\T(\tau,t) \bar{C}\T \Theta\T(\tau) \Theta(\tau) \bar{C} \Phi(\tau,t) d\tau   \nonumber \\
		&  \geq   T(t) W'_o(t,t+\delta)  T^{-1} (t)  \label{eqn:W_oA_C2}
	\end{align} 
	where $ W'_o(t,t+\delta) =\frac{1}{\delta}\int_{t}^{t+\delta} \bar{\Phi}\T(\tau,t)    \bar{C}\T \bar{\Theta}\T(\tau)   \bar{\Theta}(\tau)  \bar{C}   \bar{\Phi}(\tau,t) \\ d\tau$ with $\bar{\Theta}(t) = \bar{T}\T(t)\Theta(t)\bar{T}(t) =  \blkdiag(\bar{\Pi}_1,\dots, \bar{\Pi}_N)$ and $\bar{\Pi}_i = R\Pi_i R\T$ for each $i=1,2,\dots,N$.  
	
	Next, we are going to show that the pair $(\bar{A},\bar{\Theta}(t)  \bar{C})$ is  uniformly observable. Since $\bar{A}$ is nilpotent with $\bar{A}^3 = 0$, from Lemma \ref{lemma:lemmaUOC}, the pair $(\bar{A},\bar{\Theta}(t)  \bar{C})$ is uniformly observable if there exist scalars $\delta,\mu>0$ such that condition \eqref{eqn:gramiancondition1} holds 
	with $\mathcal{O}(t)  = [(\bar{\Theta}(t)\bar{C})\T, (\bar{\Theta}(t)\bar{C}\bar{A})\T, (\bar{\Theta}(t)\bar{C}\bar{A}^2)\T]\T$.
	Since matrix $\Pi_i$ is uniformly positive definite
	for each $i=1,2,\dots,N$, one can show that $\bar{\Pi}_i, i\in\{1,2,\dots,N\}$ and $\bar{\Theta}(t)$ are uniformly positive definite.  Then, one has $\text{rank}(\mathcal{O})  = \text{rank}([ \bar{C}\T, (\bar{C}\bar{A})\T, (\bar{C}\bar{A}^2)\T]\T)$.	Let $ {N}_1, {N}_2 \in \mathbb{R}^{3\times 15} $ be the first three   rows  of matrices $\bar{C} \bar{A} $ and $\bar{C}\bar{A}^2$, respectively. One can show that  $\text{rank}({\mathcal{O}} )= \text{rank}(\bar{\mathcal{O}})$ with 
	\begin{align}
		\bar{\mathcal{O}} :=\begin{bmatrix}
			\bar{C}\T, 
			{N}_1\T, 
			{N}_2\T
		\end{bmatrix}\T  
		. \label{eqn:O1}
	\end{align}   	
	Since there exist at least three  non-aligned landmarks, for the sake of simplicity, we assume that the first three landmarks are not aligned. Define $u_i = [u_{i1}, u_{i2}, u_{i3}]\T: = p_1 - p_{i+1}$ for $i=1,2$. One can easily show that vectors $u_1, u_2$ and $g$ are linearly independent, since the plane of these three landmarks is not parallel to the gravity vector. Let $N_0$ be the first nine rows of $\bar{C}$. Applying the matrix row and column operations on matrix $ [N_0\T, N_1\T, N_2]\T$, we obtain the following matrix:
	\begin{align} 
		\bar{\mathcal{O}}'  
		= \begin{bmatrix}
			I_3  & -p_{11}I_3 & -p_{12}I_3 & -p_{13}I_3 & 0_{3}    \\
			0_{3}  &  u_{11}  I_3   & u_{12} I_3  & u_{13} I_3    & 0_{3}     \\
			0_{3}   & u_{21}I_3   & u_{22}I_3  & u_{23}I_3    & 0_{3}        \\ 		
			0_{3}  & 0_{3} & 0_{3}  & 0_{3}   & I_3  \\
			0_{3}  & g_1 I_3        & g_2 I_3  & g_3 I_3   & 0_{3}  
		\end{bmatrix}. \label{eqn:barO'}
	\end{align} 
	One can show that $\bar{\mathcal{O}}'$ has full rank of $15$ since $u_1, u_2$ and $g$ are linearly independent. Then, it is straightforward to verify that   $  \text{rank}(\mathcal{O}) = \text{rank}(\bar{\mathcal{O}}) \geq \text{rank}(\bar{\mathcal{O}}') = 15$, which implies that the  pair $(\bar{A}, \bar{C})$  is uniformly observable, and there exist   constants  $\delta,\mu'>0$ such that 
	$  
	W'_o(t,t+\delta)  \geq \mu' I_{15} 
	$  
	for all $t\geq 0$. Choosing   $0<\mu< \bar{\epsilon}   \mu'$, from \eqref{eqn:W_oA_C2}, it follows that 
	$
	W_o(t,t+\delta)  
	\geq  T(t)	W'_o(t,t+\delta') T^{-1}(t) \geq  \bar{\epsilon} \mu' T(t)  T^{-1}(t)   \geq \mu I_{15}
	$, \ie, the pair $(A(t),C(t))$ is uniformly observable.
	This completes the proof.

	\subsection{Proof of Lemma \ref{lemma:AC3}}\label{sec:lemmaAC3}  
	Following similar steps as in the proof of  Lemma \ref{lemma:AC2},  from \eqref{eqn:gramiancondition} and \eqref{eqn:def_T}-\eqref{eqn:def_barT}, one can show that
	\begin{align}
		W_o(t,t+\delta) 
		&=\frac{1}{\delta} \int_{t}^{t+\delta} \Phi(\tau,t)\T C\T(\tau)   C(\tau)  \Phi(\tau,t) d\tau   \nonumber \\ 	 
		&  =    T(t) W'_o(t,t+\delta)   T\T(t)   \label{eqn:W_oA_C3}
	\end{align}
	where $ W'_o(t,t+\delta) =\frac{1}{\delta}\int_{t}^{t+\delta} \bar{\Phi}\T(\tau,t)    \bar{C}\T \bar{\Theta}\T(\tau)   \bar{\Theta}(\tau)  \bar{C}   \bar{\Phi}(\tau,t) \\ d\tau$ with $\bar{\Theta}(t) = \bar{T}\T(t)\Theta(t)\bar{T}(t) =  \blkdiag(\bar{\Pi}_1,\dots, \bar{\Pi}_N)$ and $\bar{\Pi}_i = R\Pi_i R\T=I_3 - \frac{p_i-p'}{\|p_i-p'\|}$ with $p'=p+Rp_c$ for each $i=1,2,\dots,N$.  
	
	Next, we are going to show that the pair $(\bar{A},\bar{\Theta}(t)  \bar{C})$ is  uniformly observable. Since $\bar{A}$ is nilpotent with $\bar{A}^3 = 0$, from Lemma \ref{lemma:lemmaUOC}, the pair $(\bar{A},\bar{\Theta}(t)  \bar{C})$ is uniformly observable if there exist scalars $\delta,\mu>0$ such that inequality \eqref{eqn:gramiancondition1} holds 
	with $\mathcal{O}(t)  = [(\bar{\Theta}(t)\bar{C})\T, (\bar{\Theta}(t)\bar{C}\bar{A})\T, (\bar{\Theta}(t)\bar{C}\bar{A}^2)\T]\T$. To verify the above condition,  we proceed by contradiction. Assume that 
	\begin{align}
		\forall \bar{\delta},\bar{\mu}>0, \exists t\geq 0, \min_{z\in \mathbb{S}^{n-1}}\frac{1}{\bar{\delta}}  \int_{t}^{t+\bar{\delta}}\| \mathcal{O}(\tau)z\|^2 d\tau  < \bar{\mu}.  \label{eqn:O_contradiction}
	\end{align}
	Similar to the arguments in the proof of Lemma \eqref{lemma:lemmaUOC}, consider a sequence $\{\mu_q\}_{q\in \mathbb{N}}$ of positive numbers converging to zero, and an arbitrary positive scalar $\bar{\delta}$. Then, there must exist a sequence of time instants $\{t_q\}_{q\in \mathbb{N}}$ and a sequence of   vectors $\{z_q\}_{q\in \mathbb{N}}$ with $z_q\in \mathbb{S}^{n-1}$ such that for any $q\in \mathbb{N}$ one has $  \int_{t_q}^{t_q+\bar{\delta}} \|\mathcal{O}(\tau)z_q\|^2 d\tau < \bar{\mu}_q$. Since $\mathbb{S}^{n-1}$ is compact, there exists a sub-sequence of $\{z_q\}_{q\in \mathbb{N}}$ which converges to a limit $\bar{z}\in \mathbb{S}^{n-1}$. Therefore, since $\Theta(t)$ is bounded, one has  
	\begin{align}
		\lim_{q\to  \infty}  \int_{0}^{  \bar{\delta}} \|\mathcal{O}(t_q+ \tau) \bar{z}\|^2 d\tau=0   \label{eqn:contradictionQ} 
	\end{align}  
	by a change of integration variable. 
	Consider the following technical result  whose proof is given at the end of this proof. 
	\begin{lemma}\label{lemma:Q=0} 
		Let $\bar{\mathcal{O}}(t) = [(\bar{\Theta}(t)\bar{C})\T,N_1\T,N_2\T]\T$. From \eqref{eqn:contradictionQ}, one has
		\begin{equation}
			\lim\nolimits_{q\to  \infty}   \|\bar{\mathcal{O}}( t_q +s) \bar{z} \|^2    =0, \forall  s\in [0,\bar{\delta}]. \label{eqn:contradictionQ2}
		\end{equation} 
	\end{lemma}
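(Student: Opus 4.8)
The plan is to upgrade the time‑averaged smallness \eqref{eqn:contradictionQ}, which is written in terms of $\mathcal{O}(t)=[(\bar{\Theta}(t)\bar{C})\T,(\bar{\Theta}(t)\bar{C}\bar{A})\T,(\bar{\Theta}(t)\bar{C}\bar{A}^{2})\T]\T$, to the pointwise‑in‑$s$ statement \eqref{eqn:contradictionQ2} for $\bar{\mathcal{O}}(t)=[(\bar{\Theta}(t)\bar{C})\T,N_1\T,N_2\T]\T$, in which the two nilpotent tail blocks $\bar{\Theta}(t)\bar{C}\bar{A}$, $\bar{\Theta}(t)\bar{C}\bar{A}^{2}$ are replaced by the constant, unprojected rows $N_1,N_2$. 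First I would make the three super‑blocks of $\mathcal{O}(t)\bar{z}$ explicit. Writing $\bar{z}=[\tilde{p}\T,\tilde{e}_1\T,\tilde{e}_2\T,\tilde{e}_3\T,\tilde{v}\T]\T$ and $\tilde{g}:=g_1\tilde{e}_1+g_2\tilde{e}_2+g_3\tilde{e}_3$, a direct computation using \eqref{eqn:Cbar}, \eqref{eqn:closed-loop-A} and $\bar{A}^{3}=0$ shows that every $3$‑block of $\bar{C}\bar{A}\bar{z}$ equals $N_1\bar{z}=\tilde{v}$ and every $3$‑block of $\bar{C}\bar{A}^{2}\bar{z}$ equals $N_2\bar{z}=\tilde{g}$. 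Setting $b_i(t):=R(t)R_cy_i(t)=(p_i-p'(t))/\|p_i-p'(t)\|$, so that $\bar{\Pi}_i(t)=\pi(b_i(t))$ and $\|\bar{\Pi}_i(t)w\|^{2}=\|w\|^{2}-(b_i(t)\T w)^{2}$, one obtains $\|\mathcal{O}(t)\bar{z}\|^{2}=\sum_{i=1}^{N}\bigl(\|\bar{\Pi}_i(t)\bar{C}_i\bar{z}\|^{2}+\|\bar{\Pi}_i(t)\tilde{v}\|^{2}+\|\bar{\Pi}_i(t)\tilde{g}\|^{2}\bigr)$ and $\|\bar{\mathcal{O}}(t)\bar{z}\|^{2}=\|\tilde{v}\|^{2}+\|\tilde{g}\|^{2}+\sum_{i=1}^{N}\|\bar{\Pi}_i(t)\bar{C}_i\bar{z}\|^{2}$, where $\bar{C}_i$ denotes the $i$‑th $3$‑block of $\bar{C}$. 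Since $\tilde{v}$ and $\tilde{g}$ are constant vectors, \eqref{eqn:contradictionQ2} is therefore equivalent to the three claims $\tilde{v}=0$, $\tilde{g}=0$ and $\bar{\Pi}_i(t_q+s)\bar{C}_i\bar{z}\to0$ for every $i$ and every $s\in[0,\bar{\delta}]$.

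Second, I would turn the $L^{1}$‑in‑time smallness into uniform‑in‑$s$ smallness. Under the standing hypotheses of Lemma \ref{lemma:AC3}, each $b_i(\cdot)$ is $C^{1}$ with a uniformly bounded derivative: under hypothesis i) because the camera moves with bounded velocity and the visible landmarks stay at a bounded, strictly positive distance, and under hypothesis ii) trivially because $b_i$ is then constant. Consequently the nonnegative maps $f_q:\tau\mapsto\|\mathcal{O}(t_q+\tau)\bar{z}\|^{2}$ form a uniformly bounded and uniformly (in $q$) Lipschitz family on $[0,\bar{\delta}]$. A routine argument---if $f_q(s_q)\ge\varepsilon$ for some $s_q$, the Lipschitz bound forces $f_q\ge\varepsilon/2$ on a subinterval of $[0,\bar{\delta}]$ of length at least $\min\{\varepsilon/(2L),\bar{\delta}\}$, contradicting $\int_{0}^{\bar{\delta}}f_q\,d\tau\to0$---then gives $\sup_{s\in[0,\bar{\delta}]}\|\mathcal{O}(t_q+s)\bar{z}\|^{2}\to0$. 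In particular $\bar{\Pi}_i(t_q+s)\bar{C}_i\bar{z}\to0$, $\bar{\Pi}_i(t_q+s)\tilde{v}\to0$ and $\bar{\Pi}_i(t_q+s)\tilde{g}\to0$ for every $i$ and every $s\in[0,\bar{\delta}]$; the first of these already settles the third claim from the previous paragraph.

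It remains to prove $\tilde{v}=0$ and $\tilde{g}=0$, and this is exactly where the two alternatives of Lemma \ref{lemma:AC3} enter; I expect this step to be the main obstacle. Under hypothesis ii) it is short: $b_i$, hence $\bar{\Pi}_i$, is constant, so \eqref{eqn:contradictionQ} reduces to $\bar{\delta}\,\|\mathcal{O}\bar{z}\|^{2}\to0$, i.e. $\mathcal{O}\bar{z}=0$; then $\bar{\Pi}_i\tilde{v}=0$ for all $i$ forces $\tilde{v}\in\bigcap_{i}\mathrm{span}(b_i)$, and among the three non‑aligned landmarks $\ell_1,\ell_2,\ell_3$ at least two bearings are non‑parallel (else these three landmarks, being collinear with the camera, would be collinear), so $\tilde{v}=0$, and likewise $\tilde{g}=0$; together with $\bar{\Pi}_i\bar{C}_i\bar{z}=0$ this gives $\|\bar{\mathcal{O}}(t_q+s)\bar{z}\|^{2}=0$. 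Under hypothesis i), suppose $\tilde{v}\neq0$; then, since $\|\bar{\Pi}_i(t)\tilde{v}\|^{2}=\|\tilde{v}\|^{2}-(b_i(t)\T\tilde{v})^{2}$, the uniform convergence $\bar{\Pi}_i(t_q+s)\tilde{v}\to0$ for $i\in\{\ell_1,\ell_2,\ell_3\}$ forces $b_i(t_q+s)$ to collapse uniformly over $s\in[0,\bar{\delta}]$ onto the fixed line $\mathbb{R}\tilde{v}$, whence $\sup_{s,s'\in[0,\bar{\delta}]}\|b_i(t_q+s)\times b_i(t_q+s')\|\to0$; choosing $\bar{\delta}$ large enough that such a window must contain an $\varepsilon$‑variation of the bearing as provided by hypothesis i) contradicts $\|b_i(t)\times b_i(t^{*})\|\ge\varepsilon$, so $\tilde{v}=0$, and the same argument applied to $\tilde{g}$ yields $\tilde{g}=0$; again $\|\bar{\mathcal{O}}(t_q+s)\bar{z}\|^{2}=\sum_{i}\|\bar{\Pi}_i(t_q+s)\bar{C}_i\bar{z}\|^{2}\to0$. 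The delicate point in the last case is precisely converting the qualitative ``no bearing is eventually constant'' content of hypothesis i) into a quantitative bound on the window length $\bar{\delta}$ that is uniform in $t_q$; the rest of the argument is bookkeeping with the block structure above.
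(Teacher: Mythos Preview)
Your argument is largely on track and structurally close to the paper's, but the route you take for the moving case (hypothesis~i)) is both unnecessary and, as you yourself flag, not fully justified. The paper handles the step $\tilde v=N_1\bar z=0$, $\tilde g=N_2\bar z=0$ \emph{without} invoking either hypothesis~i) or~ii). Since every $3$-block of $\bar\Theta(t)\bar C\bar A\,\bar z$ equals $\bar\Pi_i(t)\tilde v$ and every $3$-block of $\bar\Theta(t)\bar C\bar A^{2}\bar z$ equals $\bar\Pi_i(t)\tilde g$, the pointwise limit $\bar\Pi_i(t_q+s)\tilde v\to0$ for \emph{all} $i$ simultaneously forces (if $\tilde v\neq0$) the directions $p_i-p'(t_q+s)$ to become collinear with the fixed vector $\tilde v$ for every $i$ at once; but at any single camera position $p'$, three non-aligned landmarks cannot all lie on one line through $p'$, so $\tilde v=0$. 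The identical one-line geometric argument disposes of $\tilde g$. Thus the case split you perform, and the ``delicate point'' you identify about extracting a uniform window length $\bar\delta$ from hypothesis~i), simply do not arise in Lemma~\ref{lemma:Q=0}; hypotheses~i)/ii) are saved for \emph{after} this lemma, in the main body of the proof of Lemma~\ref{lemma:AC3}, where they are used to derive the contradiction from the remaining block $\bar\Theta(t_q+s)\bar C\bar z\to0$.

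Your Lipschitz argument for upgrading the $L^1$ smallness to uniform-in-$s$ smallness is fine (and arguably cleaner than the paper's appeal to Cauchy continuity plus dominated convergence), and your motionless-case reasoning is essentially the paper's argument specialized. It is only the second half of your plan for the moving case that should be replaced by the simpler non-alignment argument above; this removes the acknowledged gap entirely.
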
		
	Next, we consider the following cases  	
	\begin{itemize}
		\item [i)] \textit{Camera in motion:}	
		Let $\bar{C}_i = [I_3, p_{i1}I_3, p_{i2}I_3, p_{i3}I_3, 0_3]$ associated to $i$-th landmark. Note that  $\ell_1,\ell_2,\ell_3$ are not-aligned and their plane is not parallel to the gravity vector. Inequality $\bar{C} \bar{z}\neq 0_{3N\times 1}$ implies that there exists at least one of  $\bar{C}_i \bar{z}, i =\ell_1,\ell_2,\ell_3$, which is different from zero. Without loss of generality, let $\bar{z}'':=\bar{C}_i \bar{z}  \neq 0$ with $i \in \{\ell_1,\ell_2,\ell_3\}$. From \eqref{eqn:contradictionQ2}, one obtains
		\begin{align}
			\lim\nolimits_{q\to  \infty}  \|\bar{\Pi}_{i}(t_q + s) \bar{C}_i \bar{z} \|^2   =  0, \quad s \in [0,\bar{\delta}].  \label{eqn:PiC1z}
		\end{align}
		For the sake of simplicity, let $u(t) := R(t)R_c y_i(t) \in \mathbb{S}^2$. Using the facts $\bar{\Pi}_i = \pi(u)$   and  $\|\pi(u) y\|^2 = y\T \pi(u) y = - y\T (u^\times)^2 y = \|x\times y\|^2$, for any $u\in \mathbb{S}^2, y\in \mathbb{R}^3$, it follows from \eqref{eqn:PiC1z} that  
		$
		\lim\nolimits_{q\to  \infty} \| u(t_q +s ) \times \bar{z}'' \|^2    = 0, \forall s \in [0,\bar{\delta}].
		$ This implies that for any $\bar{\mu}'$, there exists $q^*$ such that for all $q\geq q^*$
		\begin{align}
			\| u(t_q+s)\times \bar{z}'' \|^2 < \bar{\mu}' ,~~ \forall   s\in [0,\bar{\delta}].  
		\end{align}  
		Motivated by the proof of \cite[Lemma 4]{berkane2020nonlinear}, let $u_1 = u(t_q)$ and $u_2 = u(t_q+\bar{\delta})$, and choose  $\bar{\mu}' = (\epsilon \|\bar{z}''\|)^2/(4+2\epsilon^2)$ such that $\| u_1  \times \bar{z}'' \|^2 + \|  u_2  \times \bar{z}'' \|^2 < (\epsilon \|\bar{z}''\|)^2/ (2+\epsilon^2)$.   The case where $\|u_1 \times u_2\| = 0$ is trivial. Let $\|u_1 \times u_2\| \neq 0$  and $\bar{z}'' = \alpha_1 u_1 + \alpha_2 u_2 + \alpha_3 u_1 \times u_2$ with constants $\alpha_i\in \mathbb{R},i=1,2,3$. Then, one has 
		$
		\| u_1 \times \bar{z}'' \|^2 + | u_2 \times \bar{z}'' \|^2  
		=  (\alpha_1^2 + \alpha_2^2 + 2 \alpha_3^2 ) \|u_1 \times u_2\|^2,  
		$
		where we made use of the fact  $\| u_i \times \bar{z}'' \|^2 = \alpha_i^2 \|u_1 \times u_2\| + \alpha_3^2 \|u_1 \times u_2\|^2$ for $i=1,2$.			
		One  can also show $\|\bar{z}''\|^2   \leq 2\alpha_1^2 + 2\alpha_2^2 +   \alpha_3^2 \|u_1 \times u_2\|^2  \leq  (2+\|u_1 \times u_2\|^2)(\alpha_1^2 + \alpha_2^2 +   \alpha_3^2 )$. Hence, one obtains 
		\begin{align*}
			\frac{ \|u_1 \times u_2\|^2 }{ 2+\|u_1 \times u_2\|^2 }  &\leq  \frac{  \|u_1 \times \bar{z}'' \|^2 + | u_2 \times \bar{z}'' \|^2 }{\|\bar{z}''\|^2}    < \frac{\epsilon^2}{2+\epsilon^2}.
		\end{align*}
		Since the function $ f(x)= {x^2}/{(2+x^2)}$ is  monotonically increasing (\ie, $\partial f/\partial x >0$) for all $x>0$, one obtains
		$
		\|u(t_q)\times u(t_q+\bar{\delta})\| = \|u_1 \times u_2\| < \epsilon
		$
		. From the definition  of $u(t)$ and the fact that $\bar{\delta}$ can be arbitrary large, this contradicts item (i) of the Lemma that for any $t^*\geq 0$, there exists $t>  t^*$ such that $\|u(t) \times u(t^*)\| > \epsilon$.

		\item[ii)] \textit{Motionless Camera:}
		In this case the matrix $\bar{\mathcal{O}}$ in  \eqref{eqn:contradictionQ2}  is constant.	For any $u \in \mathbb{R}^{3}$, one   shows   $  \bar{\Pi}_i  u      = u + (p_i-p') \bar{z}'_i$ with   $\| p_i-p' \|\neq 0$ and $\bar{z}_i'=-  {(p_i-p')\T u }/{\| p_i-p' \|^2} \in \mathbb{R} $ for all $i=1,2,\dots,N$. Note that identity $p'=p_i$ implies that the camera location coincides with the $i$-th landmark, which does not hold since all the landmarks are measurable by assumption. Let $\bar{z}'=[\bar{z}_1',\dots, \bar{z}_N']\T$ and $z'':=[\bar{z}\T,(\bar{z}')\T]\T$, which is non-zero  since $\bar{z}$ is non-zero. Hence, identity \eqref{eqn:contradictionQ2} can be rewritten as
		\begin{equation}
			\lim\nolimits_{q\to  \infty}  \left\| {\mathcal{O}}' z'' \right\|^2  =0 \label{eqn:contradictionQ3}
		\end{equation}
		with $\mathcal{O}'$ defined in \eqref{eqn:Motionless},
		which contradicts  item (ii) of the Lemma that  $\mathcal{O}'$  has full rank of $15+N$, \ie, $  \| {\mathcal{O}}' z''  \|^2  > 0, \forall t\geq 0$. 
	\end{itemize}
	Therefore, the assumption \eqref{eqn:O_contradiction} does not hold. Hence, for both cases, the pair $(\bar{A},C(t))$ is uniformly observable. It follows from \eqref{eqn:W_oA_C3} that the pair $({A}(t),C(t))$ is also uniformly observable.

	It remains to prove Lemma \ref{lemma:Q=0}. Since the velocity of the camera $v'=\frac{d}{dt}p'(t)$ is bounded,  the derivative of $\mathcal{O}(t_p+\tau)\bar{z}$ is bounded for all $q\in \mathbb{N}, \tau \in [0,\bar{\delta}]$, and then it follows that $\|\mathcal{O}(t_p+\tau)\bar{z}\|^2 $ is uniformly continuous. Since every uniformly continuous function is also Cauchy-continuous, it implies that $\|\mathcal{O}(t_p+\tau)\bar{z}\|^2$ is a Cauchy sequence of continuous functions and $\lim_{q \to \infty}\|\mathcal{O}(t_p+\tau)\bar{z}\|^2$ exists. Applying Lebesgue theorem, one has 
	$
	\lim_{q\to  \infty}  \int_{0}^{  \bar{\delta}} \|\mathcal{O}(t_q+ \tau) \bar{z}\|^2   =  \int_{0}^{  \bar{\delta}} \lim_{q\to  \infty} \|\mathcal{O}(t_q+ \tau) \bar{z}\|^2 =0. $
	Again, making use of the fact that $\lim_{q\to  \infty} \|\mathcal{O}(t_q+ \tau) \bar{z}\|^2  $ is uniformly continuous and non-negative, one can show that
	\begin{equation}
		\lim_{q\to  \infty} \|\mathcal{O}(t_q+ s) \bar{z}\|^2 = 0,\quad  \forall s\in [0,\bar{\delta}] \label{eqn:contradictionQ1}.
	\end{equation}
	From the definition of matrix $\mathcal{O}(t)$ and the identity \eqref{eqn:contradictionQ1}, it follows that $\sum_{i=0}^2\lim_{q\to  \infty} \| \bar{\Theta}(t_q+s)\bar{C} \bar{A}^i \bar{z}\|^2  = 0, \forall s\in [0,\bar{\delta}]$. Next, we are going to show that  $\bar{C}\bar{A}^2 \bar{z} = \bar{C}\bar{A} \bar{z} = 0_{3N \times 1}$. 
	Let $\bar{z} = [\bar{z}_1\T,\bar{z}_2\T,\dots, \bar{z}_5\T]\T \in \mathbb{R}^{15}$ with $\bar{z}_i \in \mathbb{R}^3$ for all $i=1,\dots,5$. Making use of the fact
	$
	\bar{\Theta}(t)\bar{C}\bar{A}^2 \bar{z}=  \bar{\Theta}(t) \begin{bmatrix}
		(g_1 \bar{z}_2  + g_2 \bar{z}_3 +  g_3 \bar{z}_4)\T,
		\dots, 
		(g_1 \bar{z}_2  + g_2 \bar{z}_3 +  g_3 \bar{z}_4)\T
	\end{bmatrix}\T $, 
	one can show that, if  $g_1 \bar{z}_2  + g_2 \bar{z}_3 +  g_3 \bar{z}_4 \neq 0$, the only solution of $\lim_{q\to  \infty}  \|\bar{\Theta}(t_q + s)\bar{C}\bar{A}^2 \bar{z}\| = 0$ is when the constant vector $g_1 \bar{z}_2  + g_2 \bar{z}_3 +  g_3 \bar{z}_4$ is collinear with $p_i-p'(t_q + s)$ for all $i=1, \dots,N$. This does no hold since there are at least three non-aligned landmarks by assumption. Hence, identity $\lim_{q\to  \infty} \| \bar{\Theta}(t_q+s)\bar{C}\bar{A}^2 \bar{z}\|^2  = 0$ implies $\bar{C}\bar{A}^2 \bar{z} = 0_{3N \times 1}$. Moreover,  making use of the fact
	$
	\bar{\Theta}(t)\bar{C}\bar{A}  z=    \begin{bmatrix}
		(\bar{\Pi}_1\bar{z}_5)\T, 
		\dots,
		(\bar{\Pi}_N\bar{z}_5)\T
	\end{bmatrix}\T $, 
	one can show that, if $\bar{z}_5\neq  0_3$, the only solution of $\lim_{q\to  \infty}  \|\bar{\Theta}(t_q + s)\bar{C}\bar{A}  \bar{z}\| = 0$ is when the constant vector $\bar{z}_5$ is collinear with $p_i-p'(t_q + s)$ for all $i=1, \dots,N$. This does no hold since there are at least three non-aligned landmarks by assumption. These identities are not satisfied since there are at least three non-aligned landmarks by assumption. Hence, $\lim_{q\to  \infty} \| \bar{\Theta}(t_q+s)\bar{C}\bar{A}  \bar{z}\|^2   = 0$ implies $\bar{C}\bar{A}  \bar{z} = 0_{3N\times 1}$.	From \eqref{eqn:O1} and the facts $\bar{C}\bar{A}^2 \bar{z} = \bar{C}\bar{A} \bar{z} = 0_{3N \times 1}$, one has $\bar{C}\bar{z} \neq  0_{3N\times 1} $. Then, from the definition of $\bar{\mathcal{O}}$, identity \eqref{eqn:contradictionQ1} can be reduced to \eqref{eqn:contradictionQ2}.  
	This completes the proof.

	\subsection{Proof of Proposition \ref{pro:non-observable}} \label{sec:non-observable}
	\begin{figure*}[htp] 
		\begin{align} 
			\mathcal{O}''&=\left[\begin{array}{ccccc|cccccc}
				I_3 & -p_{11}  I_3 &-p_{12} I_3 & -p_{13}I_3 & 0_3 & p_1-p' & 0_{3\times 1} & 0_{3\times 1}    & 0_{3\times 1}  & \dots  & 0_{3\times 1} \\
				0_3 & u_{11}  I_3 & u_{12}  I_3 & u_{13}  I_3 & 0_3 & -u_1 & p_2-p' & 0_{3\times 1} & 0_{3\times 1} &  \dots  & 0_{3\times 1} \\
				0_3 & u_{21}  I_3 & u_{22}  I_3 & u_{23}  I_3  & 0_3 & -u_2 &  0_{3\times 1} &  p_3-p'    & 0_{3\times 1}  &  \dots    & 0_{3\times 1}\\ 
				0_3 & 0_3 & 0_3 & 0_3 &I_3 & 0_{3\times 1}  & 0_{3\times 1} & 0_{3\times 1} &   0_{3\times 1}  &  \dots  & 0_{3\times 1}\\ 
				0_3 & g_1 I_3 & g_2 I_3 & g_3 I_3 & 0_3 & 0_{3\times 1} &  0_{3\times 1} & 0_{3\times 1}  &   0_{3\times 1}  &  \dots & 0_{3\times 1}\\		
				\hline	
				0_3 & 0_3 & 0_3 &0_3 & 0_3 &\alpha_{43} g &\alpha_{41}(p_2-p') & \alpha_{42}(p_3-p')&p_4-p'&  \dots  & 0_{3\times 1}\\	
				\vdots & \vdots & \vdots & \vdots & \vdots & \vdots  & \vdots & \vdots & \vdots &\ddots& \vdots\\			
				0_3 & 0_3 & 0_3 &0_3 & 0_3 & \alpha_{N3} g & \alpha_{N1}(p_2-p') &  \alpha_{N2}(p_3-p')& 0_{3\times 1}  & \dots &  p_N-p'    
			\end{array} \right]   \label{eqn:O''} 
		\end{align}  
		\hrulefill 
	\end{figure*} 
	Consider the matrix  $\mathcal{O}'$  defined in \eqref{eqn:Motionless} with $N\geq 5$. Since all the landmarks are not aligned, for the sake of simplicity, we assume that the first three landmarks are not aligned. Let $u_i = [u_{i1}, u_{i2}, u_{i3}]\T: = p_1 - p_{i+1}$ for $i=1,2$. Hence, for each landmark $p_i$,  there exist scalars $\alpha_{ij},  j=1,2,3$ such that  $p_i-p_1 = \sum_{j=1}^2 \alpha_{ij} u_j + \alpha_{i3} g$. Note that if all the landmarks are located in the same plane, one has $\alpha_{i3} = 0$ for all $ i=4,\dots,N$. Otherwise, one can always find three non-aligned landmarks, denoted by $p_1, p_2$ and $p_3$,  whose plane is not parallel to the gravity vector, \ie,  $u_1,u_2$ and $g$ are linearly independent. Applying similar column and row operations as in \eqref{eqn:barO'} on the matrix $\mathcal{O}'$, one obtains $\mathcal{O}''$ in \eqref{eqn:O''} (\ie, $\text{rank}(\mathcal{O}') = \text{rank}(\mathcal{O}'')$), which can be rewritten in the form of a block upper triangular matrix  as
	\begin{align*}
		\mathcal{O}'' = \begin{bmatrix}
			\bar{\mathcal{O}}' & \Delta \\
			0_{(3N-9) \times 15} & M'
		\end{bmatrix}
	\end{align*} 
	where $\bar{\mathcal{O}}'\in \mathbb{R}^{15\times 15}$ is equivalent to the one defined in \eqref{eqn:barO'}.  
	It is obvious to show that matrix $\mathcal{O}''$ does not have full rank if all the landmarks are located in the plane parallel to the gravity vector (\ie, $\text{rank}(\mathcal{O}')< 15$).

	Now, we assume that all the landmarks are not located in the same plane, and there exist three non-aligned landmarks, denoted by $p_1, p_2$ and $p_3$, whose plane is not parallel to the gravity. It follows that matrix $\mathcal{O}'$ has full rank and is invertible. Then, one can show that any non-zero column of $\Delta$ is linearly dependent on the columns of $\bar{\mathcal{O}}'$. Hence, $\mathcal{O}''$ has  full rank of $15+N$ if the matrix $M'$  has rank of $N$.  Since all the landmarks are measurable by assumption, one has $p'-p_i \neq 0$ for all $i=1,2,\dots,N$ and the last $N-3$ columns of $M'$ are linearly independent. Applying  the column and row operations, one can show that  matrix $M'$  has the same rank as matrix $M''$ defined as follows: 
	\begin{align}  \arraycolsep=1.0pt\def\arraystretch{1.1}
		\small M''  = \begin{bmatrix} 
			\alpha_{43}'\breve{p}_{14} &\alpha_{41}\breve{p}_{24}& \alpha_{42}\breve{p}_{34}&p_4-p'& \dots & 0_{3\times 1}\\	
			\vdots  & \vdots & \vdots & \vdots &\ddots& \vdots\\			
			\alpha_{N3}' \breve{p}_{1N}& \alpha_{N1}\breve{p}_{2N}&  \alpha_{N3}\breve{p}_{3N}& 0_{3\times 1}  & \dots &  p_N-p' 
		\end{bmatrix}  \label{eqn:M''}
	\end{align}  
	where $\breve{p}_{ij}:=p_i-p_j, \forall i,j =1,\dots,N$, and we made use of the facts $\alpha_{i3} g -\alpha_{i1}\breve{p}_{2i} -\alpha_{i2}\breve{p}_{3i}   =  \sum_{j=1}^2 \alpha_{ij} u_j + \alpha_{i3} g -  (\alpha_{i1} + \alpha_{i2})\breve{p}_{1i} =  \alpha_{i3}'\breve{p}_{1i}  $ with $\alpha_{i3}'  = -(1+ \alpha_{i1} + \alpha_{i2})$ for all $i=4,\dots,N$. 
	\begin{itemize}
		\item [(a)] Consider the case where all the landmarks are located in the same plane. It implies that $\alpha_{i3}=0$ for all $i=4,\dots,N$. From \eqref{eqn:M''}, one can   show that the  first three columns of $M''$ are linearly dependent, \ie, 
		$\alpha_{i3}'\breve{p}_{1i} + \alpha_{i1}\breve{p}_{2i} + \alpha_{i2}\breve{p}_{3i} = \alpha_{i3} g = 0_{3\times 1}$ for each $i=4,\dots,N$. Hence, the rank of $M''$ is less than $N$.
		
		\item [(b)] Consider the case where one of the first three columns is zero. 
		\begin{itemize}
			\item If $\alpha_{i1}=0$ for all $i=4,\dots,N$, one has $\breve{p}_{i1} = -\alpha_{i2}(p_3-p_1)+ \alpha_{i3} g$. It follows that all the landmarks are located in the plane that contains $p_1, p_3$ and is parallel to the gravity vector.
			\item If $\alpha_{i2}=0$ for all $i=4,\dots,N$, one has $\breve{p}_{i1} = -\alpha_{i1}(p_2-p_1)+ \alpha_{i3} g$. It follows that all the landmarks are located in the plane that contains $p_1, p_2$ and is parallel to the gravity vector.		
			\item If $\alpha_{i3}'=0$ (\ie, $\alpha_{i1}+ \alpha_{i2} = -1$) for all $i=4,\dots,N$, one has    $\breve{p}_{i1} =  -\alpha_{i1}(p_2-p_1) -\alpha_{i2}(p_3-p_1)+ \alpha_{i3} g$, which can be rewritten as $\breve{p}_{i3}= -\alpha_{i1}(p_2-p_3)+ \alpha_{i3} g$. It follows that all the landmarks are located in the plane that contains $p_2, p_3$ and is parallel to the gravity vector.
		\end{itemize}
		Hence, the rank of $M'$ is less than $N$ if landmarks $p_4,\dots,p_N$ are located in the same plane that contains   two  of landmarks $p_1, p_2, p_3$ and is parallel to the gravity vector.
		
		\item [(c)] Consider the case where landmarks $p_4,\dots,p_N$ are aligned with one of the landmarks $p_1, p_2, p_3$ and the camera position $p'$. Without loss of generality, let $p_1, p_4,\dots,p_N$ and $p'$ be aligned. Then, one can show that the first column of the matrix $M''$  is linearly dependent on its last $N-3$ columns, which implies that the rank of $M'$ is less than $N$.
		
		\item [(d)] Consider the case where there exists an index $4<\ell<N$ such that landmarks $p_4,\dots,p_{\ell}$ are located in the same plane that contains two of the landmarks $p_1,p_2,p_3$ (for example, $p_1$ and $p_2$) and is parallel to the gravity vector, and landmarks $p_{\ell+1},\dots,p_N$ are aligned with the third landmark (\ie, $p_3$) and the camera position $p'$. It follows that $\alpha_{i2} = 0$ for all $  i =4, \dots, \ell$,  $p_3-p_i$   and   $p_i-p'$ are collinear for all $i=\ell +1, \dots, N$, which implies that the rank of $M'$ is less than $N$.
	\end{itemize}
	If none of these cases hold, one can conclude that matrix $\mathcal{O}'$ in \eqref{eqn:Motionless}  has full rank. This completes the proof.

	\bibliographystyle{IEEEtran}
	\bibliography{mybib}

\begin{thebibliography}{10}
\providecommand{\url}[1]{#1}
\csname url@samestyle\endcsname
\providecommand{\newblock}{\relax}
\providecommand{\bibinfo}[2]{#2}
\providecommand{\BIBentrySTDinterwordspacing}{\spaceskip=0pt\relax}
\providecommand{\BIBentryALTinterwordstretchfactor}{4}
\providecommand{\BIBentryALTinterwordspacing}{\spaceskip=\fontdimen2\font plus
\BIBentryALTinterwordstretchfactor\fontdimen3\font minus
  \fontdimen4\font\relax}
\providecommand{\BIBforeignlanguage}[2]{{%
\expandafter\ifx\csname l@#1\endcsname\relax
\typeout{** WARNING: IEEEtran.bst: No hyphenation pattern has been}%
\typeout{** loaded for the language `#1'. Using the pattern for}%
\typeout{** the default language instead.}%
\else
\language=\csname l@#1\endcsname
\fi
#2}}
\providecommand{\BIBdecl}{\relax}
\BIBdecl

\bibitem{wang2019nonlinear}
M.~Wang and A.~Tayebi, ``Nonlinear observers for stereo-vision-aided inertial
  navigation,'' in \emph{Proc. 58th IEEE Conference on Decision and Control},
  2019, pp. 2516--2521.

\bibitem{mourikis2007multi}
A.~I. Mourikis and S.~I. Roumeliotis, ``A multi-state constraint {Kalman}
  filter for vision-aided inertial navigation,'' in \emph{Proc. IEEE
  International Conference on Robotics and Automation}.\hskip 1em plus 0.5em
  minus 0.4em\relax IEEE, 2007, pp. 3565--3572.

\bibitem{li2013high}
M.~Li and A.~I. Mourikis, ``High-precision, consistent {EKF-based}
  visual-inertial odometry,'' \emph{The International Journal of Robotics
  Research}, vol.~32, no.~6, pp. 690--711, 2013.

\bibitem{mur2015orb}
R.~Mur-Artal, J.~M.~M. Montiel, and J.~D. Tardos, ``{ORB-SLAM:} a versatile and
  accurate monocular {SLAM} system,'' \emph{IEEE Transactions on Robotics},
  vol.~31, no.~5, pp. 1147--1163, 2015.

\bibitem{qin2018vins}
T.~Qin, P.~Li, and S.~Shen, ``Vins-mono: A robust and versatile monocular
  visual-inertial state estimator,'' \emph{IEEE Transactions on Robotics},
  vol.~34, no.~4, pp. 1004--1020, 2018.

\bibitem{mahony2008nonlinear}
R.~Mahony, T.~Hamel, and J.-M. Pflimlin, ``Nonlinear complementary filters on
  the special orthogonal group,'' \emph{IEEE Transactions on Automatic
  Control}, vol.~53, no.~5, pp. 1203--1218, 2008.

\bibitem{bonnabel2009non}
S.~Bonnabel, P.~Martin, and P.~Rouchon, ``Non-linear symmetry-preserving
  observers on {Lie} groups,'' \emph{IEEE Transactions on Automatic Control},
  vol.~54, no.~7, pp. 1709--1713, 2009.

\bibitem{hua2013implementation}
M.-D. Hua, G.~Ducard, T.~Hamel, R.~Mahony, and K.~Rudin, ``Implementation of a
  nonlinear attitude estimator for aerial robotic vehicles,'' \emph{IEEE
  Transactions on Control Systems Technology}, vol.~22, no.~1, pp. 201--213,
  2013.

\bibitem{hua2010attitude}
M.-D. Hua, ``Attitude estimation for accelerated vehicles using {GPS/INS}
  measurements,'' \emph{Control Engineering Practice}, vol.~18, no.~7, pp.
  723--732, 2010.

\bibitem{roberts2011}
A.~Roberts and A.~Tayebi, ``On the attitude estimation of accelerating
  rigid-bodies using {GPS} and {IMU} measurements,'' in \emph{Proc. 50th IEEE
  Conference on Decision and Control and European Control conference}, 2011,
  pp. 8088--8093.

\bibitem{berkane2017attitude}
S.~Berkane and A.~Tayebi, ``Attitude and gyro bias estimation using {GPS} and
  {IMU} measurements,'' in \emph{Proc. 56th IEEE Conference on Decision and
  Control}, 2017, pp. 2402--2407.

\bibitem{bryne2017nonlinear}
T.~H. Bryne, J.~M. Hansen, R.~H. Rogne, N.~Sokolova, T.~I. Fossen, and T.~A.
  Johansen, ``Nonlinear observers for integrated {INS/GNSS} navigation:
  implementation aspects,'' \emph{IEEE Control Systems Magazine}, vol.~37,
  no.~3, pp. 59--86, 2017.

\bibitem{barrau2017invariant}
A.~Barrau and S.~Bonnabel, ``The invariant extended {Kalman} filter as a stable
  observer,'' \emph{IEEE Transactions on Automatic Control}, vol.~62, no.~4,
  pp. 1797--1812, 2017.

\bibitem{hua2018riccati}
M.-D. Hua and G.~Allibert, ``Riccati observer design for pose, linear velocity
  and gravity direction estimation using landmark position and {IMU}
  measurements,'' in \emph{Proc. IEEE Conference on Control Technology and
  Applications}, 2018, pp. 1313--1318.

\bibitem{wang2020hybrid}
M.~Wang and A.~Tayebi, ``Hybrid nonlinear observers for inertial navigation
  using landmark measurements,'' \emph{IEEE Transactions on Automatic Control},
  vol.~65, no.~12, pp. 5173--5188, 2020.

\bibitem{hamel2018riccati}
T.~Hamel and C.~Samson, ``Riccati observers for the nonstationary {PnP}
  problem,'' \emph{IEEE Transactions on Automatic Control}, vol.~63, no.~3, pp.
  726--741, 2018.

\bibitem{berkane2017hybrid}
S.~Berkane, A.~Abdessameud, and A.~Tayebi, ``Hybrid attitude and gyro-bias
  observer design on {SO(3)},'' \emph{IEEE Transactions on Automatic Control},
  vol.~62, no.~11, pp. 6044--6050, 2017.

\bibitem{wang2018hybrid}
M.~Wang and A.~Tayebi, ``Hybrid pose and velocity-bias estimation on {SE}(3)
  using inertial and landmark measurements,'' \emph{IEEE Transactions on
  Automatic Control}, vol.~64, no.~8, pp. 3399--3406, 2019.

\bibitem{hartley2003multiple}
R.~Hartley and A.~Zisserman, \emph{Multiple view geometry in computer
  vision}.\hskip 1em plus 0.5em minus 0.4em\relax Cambridge University Press,
  2003.

\bibitem{marco2020position}
S.~De~Marco, M.-D. Hua, T.~Hamel, and C.~Samson, ``Position, velocity, attitude
  and accelerometer-bias estimation from {IMU} and bearing measurements,'' in
  \emph{Proc. European Control Conference}.\hskip 1em plus 0.5em minus
  0.4em\relax IEEE, 2020, pp. 1003--1008.

\bibitem{ferrante2016state}
F.~Ferrante, F.~Gouaisbaut, R.~G. Sanfelice, and S.~Tarbouriech, ``State
  estimation of linear systems in the presence of sporadic measurements,''
  \emph{Automatica}, vol.~73, pp. 101--109, 2016.

\bibitem{berkane2019attitude}
S.~Berkane and A.~Tayebi, ``Attitude estimation with intermittent
  measurements,'' \emph{Automatica}, vol. 105, pp. 415--421, 2019.

\bibitem{wang2020nonlinear}
M.~Wang and A.~Tayebi, ``Nonlinear state estimation for inertial navigation
  systems with intermittent measurements,'' \emph{Automatica}, vol. 122, p.
  109244, 2020.

\bibitem{Burri25012016}
M.~Burri, J.~Nikolic, P.~Gohl, T.~Schneider, J.~Rehder, S.~Omari, M.~W.
  Achtelik, and R.~Siegwart, ``The {EuRoC} micro aerial vehicle datasets,''
  \emph{The International Journal of Robotics Research}, vol.~35, no.~10, pp.
  1157--1163, 2016.

\bibitem{baldwin2009nonlinear}
G.~Baldwin, R.~Mahony, and J.~Trumpf, ``A nonlinear observer for 6 {DOF} pose
  estimation from inertial and bearing measurements,'' in \emph{Proc. IEEE
  International Conference on Robotics and Automation}.\hskip 1em plus 0.5em
  minus 0.4em\relax IEEE, 2009, pp. 2237--2242.

\bibitem{batista2015navigation}
P.~Batista, C.~Silvestre, and P.~Oliveira, ``Navigation systems based on
  multiple bearing measurements,'' \emph{IEEE Transactions on Aerospace and
  Electronic Systems}, vol.~51, no.~4, pp. 2887--2899, 2015.

\bibitem{hamel2017position}
T.~Hamel and C.~Samson, ``Position estimation from direction or range
  measurements,'' \emph{Automatica}, vol.~82, pp. 137--144, 2017.

\bibitem{berkane2020nonlinear}
S.~Berkane, A.~Tayebi, and S.~de~Marco, ``A nonlinear navigation observer using
  {IMU} and generic position information,'' \emph{Automatica}, vol. 127, p.
  109513, 2021.

\bibitem{bucy1967global}
R.~S. Bucy, ``Global theory of the {Riccati} equation,'' \emph{Journal of
  computer and system sciences}, vol.~1, no.~4, pp. 349--361, 1967.

\bibitem{bucy1972riccati}
------, ``The {Riccati} equation and its bounds,'' \emph{Journal of Computer
  and System Sciences}, vol.~6, no.~4, pp. 343--353, 1972.

\bibitem{koditschek1988application}
D.~E. Koditschek, ``Application of a new {Lyapunov} function to global adaptive
  attitude tracking,'' in \emph{Proc. 27th IEEE Conference on Decision and
  Control}.\hskip 1em plus 0.5em minus 0.4em\relax IEEE, 1988, pp. 63--68.

\bibitem{perko2013differential}
L.~Perko, \emph{Differential equations and dynamical systems}, 3rd~ed.\hskip
  1em plus 0.5em minus 0.4em\relax Texts in Applied Mathematics 7,
  Springer-Verlag, New York. Inc. 2001, 2013, vol.~7.

\bibitem{scandaroli2013visuo}
G.~G. Scandaroli, ``Visuo-inertial data fusion for pose estimation and
  self-calibration,'' Ph.D. dissertation, Universit{\'e} Nice Sophia Antipolis,
  2013.

\bibitem{lepetit2009epnp}
V.~Lepetit, F.~Moreno-Noguer, and P.~Fua, ``{EPnP}: An accurate {O(n)} solution
  to the {PnP} problem,'' \emph{International Journal of Computer Vision},
  vol.~81, no.~2, pp. 155--166, 2009.

\bibitem{goebel2009hybrid}
R.~Goebel, R.~G. Sanfelice, and A.~R. Teel, ``Hybrid dynamical systems,''
  \emph{IEEE control systems magazine}, vol.~29, no.~2, pp. 28--93, 2009.

\bibitem{goebel2012hybrid}
------, \emph{Hybrid Dynamical Systems: modeling, stability, and
  robustness}.\hskip 1em plus 0.5em minus 0.4em\relax Princeton University
  Press, 2012.

\bibitem{deyst1968conditions}
J.~Deyst and C.~Price, ``Conditions for asymptotic stability of the discrete
  minimum-variance linear estimator,'' \emph{IEEE Transactions on Automatic
  Control}, vol.~13, pp. 702--705, 1968.

\bibitem{shi1994good}
J.~Shi and C.~Tomasi, ``Good features to track,'' in \emph{Proc. IEEE
  conference on Computer Vision and Pattern Recognition}.\hskip 1em plus 0.5em
  minus 0.4em\relax IEEE, 1994, pp. 593--600.

\bibitem{berkane2017phdthesis}
S.~Berkane, ``Hybrid attitude control and estimation on {SO(3)},'' Ph.D.
  dissertation, The University of Western Ontario, 2017.

\end{thebibliography}

\end{document}